\newtheorem{theorem}{Theorem}
\newtheorem{corollary}[theorem]{Corollary}
\newtheorem{proposition}[theorem]{Proposition}
\newtheorem{lemma}[theorem]{Lemma}
\newtheoremstyle{ourremark}% name
  {3pt}%      Space above
  {3pt}%      Space below
  {}%         Body font
  {}%         Indent amount (empty = no indent, \parindent = para indent)
  {\bfseries}% Thm head font
  {}%        Punctuation after thm head
  {.5em}%     Space after thm head: " " = normal interword space;
\theoremstyle{ourremark}
\newtheorem{remark}[theorem]{Remark}
\newtheorem{definition}[theorem]{Definition}
\newcommand{\no}{\noindent}
\newcommand{\Lie}{{\mathcal L}}
\newcommand{\conv}{\text{\rm conv}}
\newcommand{\sn}{\mathrm{sn}}
\newcommand{\ct}{\mathrm{ct}}
\newcommand{\BLOB}{\textsf{bLob}\xspace}
\newcommand{\xqed}{%
\leavevmode\unskip\penalty9999 \hbox{}\nobreak\hfill
\quad\hbox{\ensuremath{\qed}}}
\def\R{\mathbb{R}}
\def\dfn#1{{\em #1}}
\def\half{\frac{1}{2}}
\def\sec{\mathop{\mathrm{sec}}}
\def\II{I\!I}
\def\Lie{\mathcal{L}}
\newcommand\g[1]{\left\langle #1 \right\rangle}
\def\F{\mathscr{F}}  % foliation name
\def\xistd{\xi_\text{std}} % standard contact structure
\def\C{\mathsf{C}}
\def\T{\mathsf{T}}
\def\D{\mathsf{D}}
\def\nD{n_\mathsf{D}}
\def\hdim{{2 n+1}}
\def\tauPS{\tau^\text{\rm PS}}
\newcommand{\rmax}{r_{\max}}
\newcommand{\rtransverse}{r_\pitchfork}
\newcommand{\rtamed}{r_\tau}
\newcommand{\injg}{\text{\rm inj}(g)}
\newcommand{\M}{K}
\newcommand{\bound}{\rho}
\newcommand{\Ric}{\mathrm{Ric}}
\DeclareMathOperator{\Rm}{Rm}
\newcommand{\B}{%
\frac{\theta'}{2} +%
\sqrt{ \frac{(\theta')^2}{4}%
-\frac{1}{2}\min_{u \in \xi,
	\|u\| = 1}  \Bigl(K(u, R_\alpha) + K(Ju, R_\alpha)\Bigr)}%
}
\newcommand{\A}{\frac{4}{3}\sqrt{2 n-1}\,|\text{\rm sec}(g)|}
\numberwithin{equation}{section} 
\numberwithin{theorem}{section}
\title[Quantitative Darboux theorems]{Quantitative Darboux theorems\\ in contact geometry}
\author{John B. Etnyre} 
\address{School of Mathematics,
Georgia Institute of Technology,
Atlanta, GA 30332} \email{etnyre@math.gatech.edu}
\urladdr{http://www.math.gatech.edu/\~{}etnyre/}
\author{Rafal Komendarczyk}
\address{Department of Mathematics,
Tulane University,
New Orleans, LA 70118}  \email{rako@tulane.edu}
\urladdr{http://www.math.tulane.edu/\~{}rako/}
\author{Patrick Massot}
\address{Universit\'e Paris Sud, 91405 Orsay Cedex, France}
\curraddr{Centre de Math\'ematiques Laurent Schwartz\\
  \'Ecole Polytechnique \\
  91128 Palaiseau Cedex \\
  FRANCE}
\email{patrick.massot@polytechnique.edu}
\urladdr{http://www.math.polytechnique.fr/perso/massot.patrick/}
\begin{document}
\begin{abstract}
This paper begins the study of relations between Riemannian geometry and contact
topology on $(2n+1)$--manifolds and continues this study on 3--manifolds.
Specifically we provide a lower bound for the radius of a geodesic ball in a
contact $(2n+1)$--manifold $(M,\xi)$ that can be embedded in the standard
contact structure on $\R^{2n+1}$, that is on the size of a Darboux ball. The
bound is established with respect to a Riemannian metric compatible with an
associated contact form $\alpha$ for $\xi$. In dimension three, this further 
leads us to an estimate of the size for a standard neighborhood 
of a closed Reeb orbit. The main tools are classical
comparison theorems in Riemannian geometry.  In the same context, we also use
holomorphic curves techniques to provide a lower bound for the radius of a
PS-tight ball. 
\end{abstract}

\maketitle

\vspace{-1cm}
%%%%%%%%%%%%%%%%%%%%%%%%%%%%%%%%%%%%%%%%%%%%%
\section{Introduction}
%%%%%%%%%%%%%%%%%%%%%%%%%%%%%%%%%%%%%%%%%%%%%

Darboux's theorem in contact geometry says that any point in a $(2n+1)$
dimensional contact manifold has a neighborhood that can be identified with an
open ball in $\R^{2n+1}$ with its standard contact structure. In
\cite{EKM-sphere} it was shown that a quantitative version of Darboux's theorem
can give interesting global information about a contact structure on a
3--manifold. For example one can give Riemannian geometric criteria for a
contact structure to be universally tight and in addition prove a contact
geometric version of the sphere theorem.  Such results rely on deep theorems
about contact 3--manifolds. Our understanding of contact manifold in higher
dimensions is much less advanced but we will still be able to prove a
quantitative version of Darboux's theorem. 

Given a contact structure $\xi$ on a $(2n+1)$--manifold $M$ and a
Riemannian metric $g$ we can define the \dfn{Darboux radius} of $(\xi, g)$ at a point $p\in M$ as 
\begin{align*}
\begin{split}
	\delta_p(\xi, g)& =\sup\{r < \mathrm{inj}(g, p) \,|\, \text{the open geodesic ball $(B_p(r),\xi)$
  at $p$ of radius $r$ is}\\ 
  &\qquad\qquad  \text{  contactomorphic to an open subset in $(\R^{2\,n+1},\xi_{\text{\rm std}})$}\},
\end{split}
\end{align*}
(where $\mathrm{inj}(g, p)$ is the injectivity radius of $g$ at $p$) and the
\dfn{Darboux radius of $(\xi, g)$} to be
\[
  \delta(\xi, g)=\inf_{p\in M} \delta_p(\xi, g). 
\]
One would like to estimate these quantities in terms of $g$. In dimension 3 this
was done in \cite{EKM-sphere} but relied heavily on a theorem of Eliashberg
\cite{Eliashberg92a} that says that any tight contact structure on a
3--ball is embeddable in the standard contact structure on $\R^3$. In particular
in \cite{EKM-sphere} we defined the \dfn{tightness radius at a point $p$}, $\tau_p$,
and the \dfn{tightness radius} $\tau$
for a contact metric 3-manifold. Eliashberg's theorem identifies $\tau_p$ and
$\tau$ with  $\delta_p$ and $\delta$, respectively. As Eliashberg's theorem is
unavailable to us in higher dimensions and it is even plausible that it does not
hold, we will discuss direct arguments to estimate $\delta$.

But first, a more direct generalization of the estimates proved in
\cite{EKM-sphere} would be to estimate the size of geodesic balls in contact
manifolds that were in some sense ``tight''.  
We say a contact structure $\xi$ on $M^{2n+1}$ is \dfn{PS-overtwisted} if it
contains a \BLOB (see Section~\ref{SS:PS} for the relevant definitions). 
Otherwise we say $\xi$ is \dfn{PS-tight}. Given a
contact $(2n+1)$--manifold $(M,\xi)$ and a Riemannian metric $g$ we  now define
the \dfn{PS--tightness radius} at $p\in M$ with respect to $g$ to be  
\begin{align*}
\tauPS_p(\xi, g) =\sup\{r < \mathrm{inj}(g, p)\,|&\, \text{the geodesic ball $B_p(r)$
  at $p$}\\
  &  \text{of radius $r$ is PS--tight}\},
\end{align*}
and the \dfn{PS-tightness radius of $M$} to be
\[
\tauPS(\xi, g)  =\inf_{p\in M} \tauPS_p(\xi, g).
\]

After this paper was written, Borman, Eliashberg and Murphy
\cite{BormanEliashbergMurphy} defined a notion of an ``overtwisted
disk'' in all dimensions and proved that contact structures containing
such a disk satisfy an $h$-principle. In particular, all almost contact structures are homotopic to contact structures with an overtwisted disk and any two such contact structures that are homotopic through almost contact structures are isotopic. So this notion of overtwisted
is ``the right one'' in higher dimensions and we can use
it to define the tightness radius $\tau(\xi, g)$. It is proved in
\cite{MNW, Niederkrueger06} that the standard structure on $\R^{2n+1}$ is
PS-tight and in 
\cite{BormanEliashbergMurphy} that an overtwisted contact structure is
PS-overtwisted so 
$\delta(\xi, g)\leq \tauPS(\xi, g)\leq \tau(\xi, g)$. The relation between
tightness and PS-tightness is not yet completely elucidated and they may be
equivalent. In any case, all our results about $\tauPS(\xi, g)$ (which are 
lower bounds) also hold equally well for $\tau(\xi,g)$. 

Again, in dimension $3$ the above quantities all are equal 
\[
\tauPS(\xi, g)= \tau(\xi, g)=\delta(\xi, g),
\]
because in that dimension {\it PS-tight} is equivalent
to {\it tight}, which, for a 3--ball, in turn is equivalent to being a Darboux
ball. In higher dimensions it is not known whether $\tauPS(\xi, g)$ can be
strictly larger than $\delta(\xi, g)$.

\subsection{Estimates for \texorpdfstring{$\tauPS$}{tauPS}} The first result in
this article extends convexity type estimates for $\tauPS(M,\xi)$ from
\cite{EKM-sphere} to higher dimensions in the setting of compatible metrics. In
higher dimensions the definition of a metric $g$ being compatible with a contact
structure $\xi$ on $M^{2\,n+1}$ is more complicated than the one considered in
\cite{EKM-sphere} for dimension 3. We refer to Definition~\ref{highDcompat} for
the precise details. Here we merely note that given a contact form $\alpha$ for
$\xi$ used in the definition of compatibility,
Proposition~\ref{proposition:determinemetric} gives a  complex structure
$J$ on $\xi,$ and the metric can be written as follows
\[
g(v,w) = \frac{1}{\theta'} d\alpha(v, Jw^\xi)+\alpha(v)\alpha(w),
\]
where $\theta'$ is constant and measures the instantaneous rotation speed of
$\xi$ (see Remark~\ref{rem:rotation_speed}), and $w^\xi$ denotes the
component of the vector $w$ lying in the $\xi$ component of the
splitting of $TM$ into $\xi$ and the span of the Reeb vector field of
$\alpha$. 

We obtain the following generalization of Theorem~1.3 from \cite{EKM-sphere}.

\begin{theorem}\label{thm:main-higher} 
Let $(M^\hdim,\xi)$ be a contact manifold and $(\alpha, g,  J)$ 
be a compatible metric structure for $\xi.$ Then,
\begin{equation}\label{eq:conv}
\tauPS(\xi, g)\geq \conv(g),
\end{equation}
where 
\[
\begin{split}
\conv(g)=\sup \{ r \, |\,& r< \injg \text{ and the geodesic spheres of radius $r$} \\ 
&\text{are geodesically convex} \},
\end{split}
\]
and $\injg$ is the injectivity radius of $(M,g).$ 
In particular, if $\sec(g)\leq K$, for $K > 0$,
then
\[
 \tauPS(\xi, g)\geq \min\{\injg,\frac{\pi}{2\sqrt{K}}\}
\]
and $\tauPS(\xi, g) = \injg,$ if $g$ has non-positive curvature.
\end{theorem}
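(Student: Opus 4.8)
The plan is to reduce the inequality $\tauPS(\xi,g)\ge\conv(g)$ to the single assertion that a weakly geodesically convex geodesic ball $B_p(r)$ with $r<\injg$ is PS-tight, i.e.\ contains no \BLOB; taking the supremum over admissible $r$ and the infimum over $p\in M$ then gives the bound, and the two displayed consequences are immediate from classical comparison geometry — under $\sec(g)\le K$ with $K>0$ the convexity radius is at least $\min\{\injg,\pi/(2\sqrt K)\}$, and under $\sec(g)\le 0$ every ball of radius $<\injg$ is weakly geodesically convex — so I would simply quote these facts (Rauch comparison and the standard convexity-radius estimate), noting that $\conv(g)=\injg$ in the nonpositively curved case while the reverse inequality $\tauPS\le\injg$ is built into the definitions.

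For the core assertion I would argue by contradiction, assuming a \BLOB $P\subset B_p(r)$, and run the pseudoholomorphic-disk machinery of \cite{MNW, Niederkrueger06}, with the strictly plurisubharmonic distance function playing the role usually played by a symplectic filling. First I would unpack the compatible structure: by Definition~\ref{highDcompat} and Proposition~\ref{proposition:determinemetric}, $g$ is determined by $\alpha$, its Reeb field $R_\alpha$ (a $g$-unit field $g$-orthogonal to $\xi$) and a $d\alpha$-compatible almost complex structure $J$ on $\xi$; this furnishes the standard cylindrical almost complex structure $\widetilde J$ on the symplectization $\bigl(\R\times M,\,d(e^t\alpha)\bigr)$, equal to $J$ on $\xi$ and sending $\partial_t\mapsto R_\alpha$. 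The geometric input I would extract from convexity is that $\psi:=\tfrac12 d(p,\cdot)^2$ has positive semidefinite Hessian on $B_p(r)$; after shrinking to a radius $r'<r$ and an arbitrarily small perturbation this becomes strictly plurisubharmonic for the data $(\xi,J,R_\alpha)$, so the distance spheres $\partial B_p(s)$, $0<s<r'$, become $\widetilde J$-convex hypersurfaces.

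Next I would invoke the holomorphic-disk theory attached to the \BLOB: its leaves together with the binding form a totally real boundary condition, producing a nonempty $1$-dimensional moduli space $\mathcal M$ of $\widetilde J$-holomorphic disks with boundary on (a suitable lift of) $P$, having exactly one boundary end, coming from the small Bishop disks that collapse onto the binding. Since $P$ lies in $B_p(r')$, the maximum principle for the plurisubharmonic function $\psi$, together with the $\widetilde J$-convexity of the distance spheres, confines the image of every disk in $\mathcal M$ to a compact region inside the relevant slice of $\R\times M$; this is precisely the a priori $C^{0}$-bound that in \cite{MNW} is supplied by the filling, and it lets Gromov compactness apply, forcing $\mathcal M$ to be a compact $1$-manifold. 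But a compact $1$-manifold has an even number of boundary points, contradicting the single collapsing end. Hence no \BLOB can sit inside a weakly geodesically convex ball of radius $<\injg$, which is the assertion.

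The main obstacle is the passage in the middle and third steps: turning "weakly geodesically convex" into genuine strict plurisubharmonicity of the Riemannian distance-squared function for the almost complex structure dictated by the compatible metric, and — more delicately — checking that this barrier actually yields the $C^{0}$-estimate needed to run Gromov compactness for disks in the noncompact symplectization $\R\times M$, in particular ruling out breaking along closed Reeb orbits or bubbling that would carry a curve out of the ball. Pinning down exactly which structural properties of the \BLOB moduli space from \cite{MNW} and \cite{Niederkrueger06} are being quoted, and confirming that the perturbation used to pass from semidefinite to strictly positive Hessian does not disturb the contact-geometric set-up, is where the real work lies.
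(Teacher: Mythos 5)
Your proposal agrees with the paper through the first half — extending $J$ to a cylindrical almost complex structure on the symplectization, using the Hessian/Levi-form comparison of Proposition~\ref{prop:levi-form} to turn Riemannian convexity into pseudoconvexity of the lifted distance spheres, and starting the Bishop family of holomorphic disks attached to the assumed \BLOB — but diverges in the crucial final step, and the divergence is a genuine gap rather than a shortcut.

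You claim that the $C^0$ confinement of the disks inside the ball ``lets Gromov compactness apply, forcing $\mathcal M$ to be a compact $1$-manifold'' and derive a parity contradiction. A $C^0$ bound in the $M$-direction does \emph{not} give compactness of the moduli space in a symplectization: disks can break along Reeb orbits, with pieces escaping to $-\infty$ in the $\R$-coordinate. Ruling this out requires knowing a priori that there is no closed Reeb orbit inside the ball, and you have no mechanism for that; you flag this yourself as ``the real work,'' and it cannot be dismissed — it is the entire substance of the theorem. In fact the paper does the opposite of what you attempt: it does \emph{not} try to rule out Reeb breaking, it exploits it. The Bishop family with its single boundary end forces non-compactness; SFT compactness together with the pseudoconvex confinement (the strategy of \cite{AlbersHofer09}, adapted via \cite{MNW}, see Theorem~\ref{albershofer}) then produces a closed Reeb orbit $\gamma$ \emph{inside} the ball $B$. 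The contradiction is then purely Riemannian: by Proposition~\ref{Rderivative}, compatibility forces $\nabla_{R_\alpha}R_\alpha=0$, so Reeb orbits are geodesics, and a closed geodesic inside a (weakly) geodesically convex ball $B_p(r)$ would have an interior tangency with some sphere $\partial B_p(r_0)$ with $r_0<r$, which convexity forbids. This geodesic observation — that Reeb orbits of a compatible metric are geodesics and cannot close up inside a convex ball — is the key idea your proposal is missing, and without it the parity argument on $\mathcal M$ cannot close.

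A smaller remark: the perturbation you introduce to pass from a semidefinite to a strictly positive Hessian is unnecessary. Working with $\frac12 d(p,\cdot)^2$ as you do, the Hessian already contains the rank-one term $dr\otimes dr$ and is positive definite below the convexity radius; alternatively, the paper works with $\mathsf{r}_p$ directly and only needs weak pseudoconvexity ($L\ge 0$) plus the strong maximum principle for the confinement estimate, so no perturbation is made there either.
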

As in dimension 3 the bound on $\tauPS$ is especially effective in the case of
non-positive curvature. 

\begin{corollary}\label{cor:ut-higher}
 Let $(M,\xi)$ be a $(2n+1)$-contact manifold and $g$ a complete Riemannian
 metric compatible with $\xi$ having non-positive sectional curvature.
 Then $\xi$ pulled back to the universal cover of $M$ is PS-tight. In
 particular, if $(M,\xi)$ contains a \BLOB $N$, then the image of $i_*:\pi_1(N)\to\pi_1(M)$, 
 where $i:N\to M$ is the inclusion map, is infinite.  
\end{corollary}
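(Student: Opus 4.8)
The plan is to pull everything back to the universal cover $p\colon\widetilde M\to M$ and apply Theorem~\ref{thm:main-higher} there. Endow $\widetilde M$ with $\widetilde g=p^{*}g$ and set $\widetilde\xi=p^{*}\xi$. Since a Riemannian covering is a local isometry, $\widetilde g$ is again complete and has non-positive sectional curvature, and since the ingredients of compatibility (a contact form $\alpha$, the almost complex structure $J$ on $\xi$, the constant $\theta'$, and the pointwise identity $g(v,w)=\frac{1}{\theta'}d\alpha(v,Jw^{\xi})+\alpha(v)\alpha(w)$) are all natural under local diffeomorphisms, the triple $(p^{*}\alpha,\widetilde g,p^{*}J)$ is a compatible metric structure for $\widetilde\xi$. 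By the Cartan--Hadamard theorem the exponential map of $\widetilde g$ is a diffeomorphism at every point, so $\mathrm{inj}(\widetilde g)=\infty$. Theorem~\ref{thm:main-higher} (the non-positive curvature clause) then gives $\tauPS(\widetilde\xi,\widetilde g)=\mathrm{inj}(\widetilde g)=\infty$, i.e.\ every geodesic ball in $\widetilde M$ is PS-tight. As any \BLOB is compact it is contained in some geodesic ball of finite radius, which is PS-tight; hence $\widetilde M$ contains no \BLOB, that is, $\widetilde\xi$ is PS-tight, which is the first assertion.

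For the second assertion I would argue by contraposition. Suppose $H:=i_{*}(\pi_{1}(N))\le\pi_{1}(M)$ is finite and let $\widetilde N$ be a connected component of $p^{-1}(N)\subset\widetilde M$. Since $p$ is a covering map and $N$ is an embedded submanifold, $p^{-1}(N)$, and therefore its component $\widetilde N$, is an embedded submanifold of $\widetilde M$, and $p|_{\widetilde N}\colon\widetilde N\to N$ is a covering map with $|H|$ sheets; in particular $\widetilde N$ is a finite cover of $N$ and hence compact. Because $p$ restricts to a local contactomorphism $(\widetilde M,\widetilde\xi)\to(M,\xi)$, pulling back along $p|_{\widetilde N}$ the open book and Legendrian structures that make $N$ a \BLOB exhibits $\widetilde N$ as a \BLOB inside $(\widetilde M,\widetilde\xi)$, contradicting the PS-tightness of $\widetilde\xi$ proved above. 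Therefore $H$ is infinite.

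I expect the only delicate point to be the verification in the second paragraph that the defining structure of a \BLOB (cf.\ Section~\ref{SS:PS}) really does pull back along the finite covering $p|_{\widetilde N}$, so that $\widetilde N$ is an honest \BLOB rather than merely an immersed or otherwise degenerate copy; the compactness of $N$, used both here and in the exhaustion argument for the first assertion, is exactly what keeps $p|_{\widetilde N}$ finite and $\widetilde N$ compact. The remaining ingredients --- that completeness and the curvature bound pass to Riemannian coverings, the naturality of the compatibility data, and Cartan--Hadamard --- are standard.
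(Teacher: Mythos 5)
Your proof is correct and follows the same route as the paper: pull the compatible metric structure back to the universal cover, invoke Cartan--Hadamard to get an infinite injectivity/convexity radius, and apply the non-positive-curvature clause of Theorem~\ref{thm:main-higher} to conclude that every geodesic ball, hence every compact subset, is \BLOB-free. The paper dispatches the second assertion with ``The claim follows''; your contraposition argument---identifying the component $\widetilde N$ of $p^{-1}(N)$ as the cover of $N$ associated to $\ker i_{*}$, so of degree $|i_{*}\pi_{1}(N)|$, hence compact when that image is finite, and checking that the relative open book and the Legendrian conditions pull back along a local contactomorphism---is exactly the omitted verification, carried out correctly.
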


\begin{remark}
Sasakian (or, more generally, $K$-contact) manifolds are contact metric manifolds
satisfying some extra conditions, see \cite{Blair02}. One can naturally wonder whether 
Theorem \ref{thm:main-higher} is relevant to their study. But those manifolds do not contain any
\BLOB since one can combine the main results of \cite{NiederkrugerPasquotto09} and
\cite{Rukimbira95} to prove that they are symplectically fillable. 
The same conclusion actually holds for the wider class of integrable
$CR$-contact metric manifolds in dimension at least 5, see
Definition~\ref{integrableCR}. As explained in \cite[Theorem
5.60]{CieliebakEliashberg} this can be proved using deep results in complex
analysis due to Lempert, Hironaka and Rossi. However, 
Theorem~\ref{thm:GeometricDarbouxHigherRough} below still gives non-trivial 
information in this context.
\xqed
\end{remark}

\begin{remark}
Recall that in dimension 3 a contact structure is called universally
tight if the contact structure is tight when pulled back to the
universal cover and virtually overtwisted if it is overtwisted when
pulled back to a finite cover. Since an overtwisted disk is contractible
it can be lifted to any cover, thus if a contact structure is
universally tight then it is also tight. Since the fundamental groups of
3--manifolds are residually finite it is also known that being universally
tight is equivalent to not being virtually overtwisted. One can make the
same definitions in higher dimensions for PS-universally tight and
PS-virtually overtwisted and similarly for universally tight and
virtually overtwisted using the Borman-Eliashberg-Murphy notion of
overtwisted. Since their notion of overtwisted also involves disks one
again sees that universally tight implies not virtually overtwisted
which in turn implies tight, but now we do not know if  universally
tight is equivalent to not virtually overtwisted (though it seems
unlikely due to the fact that fundamental groups in higher dimensions
need not be residually finite). Note in particular that
Corollary~\ref{cor:ut-higher} implies that $(M,\xi)$ is universally
tight.  The relation between  PS-universally tight and PS-virtually
overtwisted and PS-tight  is much more complicated due to the fact that
a \BLOB need not be simply connected and hence it might not lift to
covers. Notice that the condition in Corollary~\ref{cor:ut-higher} on
the fundamental group of a \BLOB implies that its preimage in the
universal cover of $M$ will be non-compact and hence not a \BLOB. 
\xqed
\end{remark}

%%%%%%%%%%%%%%%%%%%%%%%%%%%%%%%%%%%%%%%%%%%%%%%%%%%%%%%%%%%%%%%%%%%
\subsection{Direct geometric methods for estimating the Darboux radius}\label{S:direct-intro}
We now discuss a method for estimating $\delta(\xi, g)$ in higher
dimensions. This strategy is more geometric and direct than the one used in
\cite{EKM-sphere} to bound the Darboux radius in dimension 3, as it does not use
holomorphic curves or classification results. We want a control on the Darboux
radius using control on curvature, the rotation speed $\theta'$ and the
tensor $[J, J]$ (the later measures how far is the induced CR structure from
being integrable, see Section~\ref{S:compatible_metrics} for
definitions). Note that, if $\M$ is a bound on sectional curvature then
$1/\theta'$, $\|[J, J]\|$ and $1/\sqrt \M$ behave like lengths under
homothety (constant rescaling of the metric). This explains the
appearance of such terms in the following estimate.  It is also expected
that the estimate deteriorates when the rotation speed increases or when
one widens the sectional curvature interval but also when $\|[J, J]\|$
increases since compatible metrics are less tied to the contact
structure for non-integrable CR structures.

\begin{theorem}
	\label{thm:GeometricDarbouxHigherRough} 
Let $(M^\hdim,\xi)$ be a  $(2n + 1)$--dimensional contact manifold and 
$(\alpha, g,  J)$ be a complete compatible metric structure for $\xi$ with
rotation speed $\theta'$. If the sectional curvature of $g$ is contained in the
interval $[-K,K]$ for some positive $K$ then %$[-\M, \M]$ for some positive $\M$ then
\[
\delta(\xi,g) \geq 
\min\left(\frac{\injg}{2}, 
\frac{1}{208n^2\max\big(\sqrt K,\, \|[J, J]\|,\, \theta'\big)}\right),
\]
where $[J,J]$ is the Nijenhuis torsion of the complex structure $J$ on $\xi$.
\end{theorem}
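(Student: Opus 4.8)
The plan is to transport the contact structure to a Euclidean ball via the exponential map, show with comparison geometry that it becomes $C^1$--close to the standard contact form $\alpha_{\mathrm{std}}$, and then run a quantitative Darboux (Moser) argument; every estimate is carried out at a single point $p\in M$ with constants depending only on $n$, so taking $\inf_p$ at the end produces the bound on $\delta(\xi,g)$. I would begin by reducing to $\theta'=1$ via a homothety: replacing $g$ by $\theta'^2 g$ multiplies $\injg$ by $\theta'$, turns the curvature hypothesis into $|\sec|\le \M/\theta'^2$, and normalizes the rotation speed, so that $\injg$ and $1/\max(\sqrt\M,\theta')$ both rescale as lengths, exactly as remarked before the theorem. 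Fix $p$ and set $R_0=\min(\injg,\,c/\sqrt\M)$ for a small universal $c$, so that $\exp_p$ is a diffeomorphism from the Euclidean ball $B(0,R_0)\subset T_pM\cong\R^{2n+1}$ onto the geodesic ball $B_p(R_0)$. Using Proposition~\ref{proposition:determinemetric}, I would choose the linear identification $T_pM\cong\R^{2n+1}$ so that it matches the $1$--jet of $\alpha$ at $p$ with the $1$--jet of $\alpha_{\mathrm{std}}$ at the origin (possible by the linear Darboux normal form for contact forms), so that $\exp_p^*\alpha-\alpha_{\mathrm{std}}$ vanishes to first order at $0$.

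The technical heart is a quantitative $C^1$ comparison of $\exp_p^*\alpha$ with $\alpha_{\mathrm{std}}$ on $B(0,r)$ for $r\le R_0$, fed by two independent inputs. On the Riemannian side, the two--sided bound $|\sec|\le\M$ gives, via the Rauch comparison theorem and the Riccati equation for the shape operators of geodesic spheres, that the metric in normal coordinates and its first derivatives (equivalently, the Christoffel symbols) lie within $O(\sqrt\M\,r)$ of the flat ones on $B(0,r)$; the dimensional factors $n(n-1)$ and $\sqrt n$ enter here because controlling all sectional curvatures controls the full curvature tensor only up to a factor that grows with $n$. On the contact side, the compatibility relations give $\alpha=g(R_\alpha,\cdot)$ with $|R_\alpha|_g=1$ and bound $\nabla R_\alpha$ pointwise in terms of $\theta'$, so that along geodesics issuing from $p$ the form $\exp_p^*\alpha$ twists, relative to a parallel frame, at speed $O(\theta')$ --- precisely the twisting already built into $\alpha_{\mathrm{std}}$. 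Combining the two one obtains
\[
\big\|\exp_p^*\alpha-\alpha_{\mathrm{std}}\big\|_{C^1(B(0,r))}\ \le\ C(n)\,\max(\sqrt\M,\theta')\,r .
\]
The main obstacle is making this effective with explicit constants while respecting the fact that pointwise curvature bounds yield only $C^1$, not $C^2$, control of the metric in normal coordinates: the comparison cannot be organized as a naive second--order Taylor expansion of $\exp_p^*\alpha$ (which would require a bound on $\nabla R$), but must be built around the first--order rotation--speed structure of the contact data, comparing $\exp_p^*\alpha$ with the explicitly twisting model $\alpha_{\mathrm{std}}$ rather than with a constant form.

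Finally, a quantitative Darboux step. There is a universal $\varepsilon_0(n)>0$ such that whenever a $1$--form $\beta$ on $B(0,r)\subset\R^{2n+1}$ satisfies $\|\beta-\alpha_{\mathrm{std}}\|_{C^1(B(0,r))}<\varepsilon_0$, the straight--line path $\alpha_t=(1-t)\alpha_{\mathrm{std}}+t\beta$ is a path of contact forms and the associated Moser--Gray isotopy exists for unit time on $B(0,r/2)$, producing a contactomorphism from $(B(0,r/2),\ker\beta)$ onto an open subset of $(\R^{2n+1},\xistd)$; the shrinkage from $r$ to $r/2$ is forced because the Moser vector field must be integrated for unit time. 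Applying this with $\beta=\exp_p^*\alpha$ and with $r$ chosen so that $C(n)\max(\sqrt\M,\theta')\,r<\varepsilon_0(n)$, and composing with $\exp_p^{-1}$, shows that $B_p(r/2)$ is a Darboux ball, whence
\[
\delta_p(\xi,g)\ \ge\ \tfrac12\min\!\Big(\injg,\ \tfrac{c'_n}{\max(\sqrt\M,\theta')}\Big)
\]
for a dimensional constant $c'_n$. Undoing the homothety, taking the infimum over $p$, and collecting the accumulated universal and dimensional constants gives the theorem; here the factor $\tfrac12$ survives on $\injg$ as the genuine Moser loss, while the analogous factor on the curvature term is absorbed into $c_n=1/(192(1+2n(n-1))\sqrt n)$.
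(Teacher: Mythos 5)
The overall scaffolding of your proposal (fix a point, transport to a Euclidean ball by $\exp_p$, quantify, take infima, reduce by homothety) is sound, but the technical heart --- the claimed $C^1$ bound $\|\exp_p^*\alpha-\alpha_{\mathrm{std}}\|_{C^1(B(0,r))}\lesssim\max(\sqrt\M,\theta')\,r$ --- has a genuine gap, one that you explicitly flag but do not resolve. Proving that bound amounts to controlling the oscillation of $\partial(\exp_p^*\alpha)$ on the ball, since $\partial\alpha_{\mathrm{std}}$ is constant and the two agree at the origin after your normalization. Writing $\alpha=\iota_{R_\alpha}g$ and unwinding, the derivative $\partial_j\alpha_i$ involves $\Gamma$, $\nabla R_\alpha$ and the metric in coordinates; its oscillation over $B(0,r)$ therefore involves $\partial\Gamma$ and the covariant derivative of $\nabla R_\alpha$ --- that is, second derivatives of the metric and the second covariant derivative of the Reeb field. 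Pointwise bounds $|\sec|\le\M$ together with the compatibility conditions yield $C^0$ control of $g$, $\Gamma$, $\nabla R_\alpha$, $\nabla\phi$ (Lemma~\ref{lem:XY} and Proposition~\ref{3Dcurvature}), but they do \emph{not} bound $\nabla^2 R_\alpha$ or $\partial^2 g$ at points away from the center; the Ricci identity controls only the antisymmetric part of $\nabla^2 R_\alpha$, and the compatibility structure only leads back to $\nabla h$, which is again uncontrolled. ``Comparing with the twisting model rather than a constant form'' removes the trivial $O(\theta')$ discrepancy, but not this second-order one. Consequently the straight-line Moser path may move points by an amount proportional to $r$ (not a small multiple of $r$), and the isotopy is not known to exist for unit time inside the ball.

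The paper avoids this difficulty by a structurally different decomposition that never asks for second-order control. Rather than pulling back $\alpha$ by $\exp_p$ on the full ball, it flows the geodesic disk $\D(r)$ tangent to $\xi_p$ by the Reeb field, so that on the cylinder $E^*\alpha=dt+\beta$ \emph{exactly}, with $\beta=(\exp^\xi_p)^*(\alpha|_{\D(r)})$ a Liouville form on the $2n$-disk. The whole contact comparison is thereby reduced to comparing two primitives of symplectic forms on the base. The Interpolation Lemma (Section~\ref{sec:contactizations}) then only requires that $d\beta$ and $d\beta_{\mathrm{std}}$ tame a common almost complex structure, an open $C^0$ condition on the \emph{two-forms}. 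Proposition~\ref{prop:Jtamed} verifies this taming using Lemmas~\ref{lem:XY} and~\ref{lem:estimateFF'}, which involve only curvature, $\nabla\phi$, $\nabla R_\alpha$, and Rauch/Jacobi comparison --- first-order data. Transversality of $R_\alpha$ to $\D(r)$ (Proposition~\ref{prop:t-estimate}) and fitting a geodesic ball into the Reeb cylinder (Proposition~\ref{prop:cyl2balls}, via Toponogov) likewise use only first-order quantities. In short: the paper runs Moser at the symplectic level on the disk, where the relevant object is $d\alpha$ and the hypotheses control it, whereas your plan runs Moser at the contact level on the ball, where the relevant object is $\alpha$ itself in $C^1$ and the hypotheses do not control the needed second-order information. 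A secondary, more easily repaired issue: a \emph{linear} change of coordinates can normalize $\alpha(0)$ and $d\alpha(0)$ but not the symmetric part of $\partial\alpha(0)$, so ``matching the $1$-jet'' requires a quadratic correction, which then perturbs the metric identification at second order and would have to be tracked.
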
 
Theorem~\ref{thm:GeometricDarbouxHigher} is a better, but more technical, 
bound on $\delta(\xi,g) $ and
is proven in Section~\ref{sec:darboux} after the explicit quantities used in the
better estimate are explained, but the main point is the existence of an {\em explicit} bound
rather than its precise expression. 
\begin{remark}
Since every contact structure has a
compatible metric structure, this theorem always produces an explicit bound on the 
Darboux radius in terms of Riemannian curvature information.
\end{remark}

In dimension 3 we can improve the bound on
$\delta(\xi,g)$ coming from these geometric methods.

\begin{theorem}
\label{thm:geometric}
Let $(M, \xi)$ be a contact 3--manifold and $g$ a complete Riemannian metric that is compatible with $\xi$ and has rotation speed
$\theta'$. Then,
\begin{equation}\label{eqn:main-bound}
\delta(\xi, g)= \tau(M,\xi)\geq \min\Bigl\{
\frac{\injg}{2}, \frac{\pi}{2\sqrt{K}},\frac{2}{\sqrt{2\,A+B^2}+B}\Bigr\},
\end{equation}
where
\[
A = \frac{4}{3}\,|\sec(g)|,\quad
B = \frac{\theta'}{2} + 
	 \sqrt{ \frac{(\theta')^2}{4}-\frac{1}{2}\min_{p\in M} \bigl(\Ric_p(R_\alpha)\bigr)}
\]
and $\Ric$ is the Ricci tensor, $R_\alpha$ is the Reeb vector field of $\alpha$ and  
$|\text{\rm sec}(g)|$ is the maximum in absolute value of the sectional curvature
over $(M,g)$, and the constant $K>0$ an arbitrary positive upper bound for it. 
\end{theorem}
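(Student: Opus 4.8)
The plan is to follow the same strategy as in dimension 3 from \cite{EKM-sphere}, but now exploiting the extra structure coming from the compatible metric $(\alpha,g,J)$ in the case $n=1$. Recall that by Eliashberg's theorem a geodesic ball is a Darboux ball as soon as the contact structure restricted to it is tight, so that $\delta(\xi,g)=\tau(M,\xi)$; thus it suffices to produce a lower bound for the tightness radius. The standard route to tightness is to build a plane field (or an explicit contact form) on the ball that is \emph{convex} in the sense that no embedded overtwisted disk can appear, and the classical way to rule out overtwisted disks quantitatively is to find a function whose level sets are \emph{$\xi$-convex} hypersurfaces, or equivalently to control the second fundamental form of geodesic spheres together with the way $\xi$ meets them. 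Concretely, I would fix $p\in M$, work in geodesic normal coordinates on $B_p(r)$ for $r<\injg$, and study the characteristic foliation induced by $\xi$ on the geodesic spheres $\partial B_p(s)$ for $s\le r$.

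The heart of the argument is a differential-inequality / Riccati-type estimate. First I would write the Reeb vector field $R_\alpha$ and the contact form $\alpha$ in terms of the metric $g$, using the compatibility relation $g(v,w)=\frac{1}{\theta'}d\alpha(v,Jw^\xi)+\alpha(v)\alpha(w)$ recalled in the introduction; this expresses $d\alpha$ and hence the ``twisting'' of $\xi$ purely in terms of $g$, $J$ and the constant $\theta'$. Next I would set up the Riccati comparison for the shape operator of the geodesic spheres $\partial B_p(s)$: since $\sec(g)$ lies in $[-|K|,|K|]$ (here $K=\max_p|\sec_p(g)|$), the eigenvalues of the second fundamental form $\II$ are squeezed between $\sqrt{|K|}\coth(\sqrt{|K|}s)$ and $\sqrt{|K|}\cot(\sqrt{|K|}s)$ by the usual Hessian comparison, which is exactly where the term $\pi/(2\sqrt K)$ enters (it is the first zero of the cotangent bound). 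Then I would track the singularities of the characteristic foliation: $\xi$ is tangent to $\partial B_p(s)$ at a point precisely where the outward radial geodesic direction lies in $\xi$, and at such a point the sign of the singularity and the ``expansion/contraction'' of the foliation are governed by a combination of $\II$, the Lie derivative of $\alpha$ along the radial field, and the curvature of the Reeb flow, which is where $\Ric_p(R_\alpha)$ appears. Bounding $d\alpha$ restricted to $\xi$ via $\theta'$ and the curvature via $K$ and $\Ric(R_\alpha)$ produces a quadratic inequality in $1/s$ of the form $\tfrac12(2A+B^2)\,s^2 + 2Bs < 2$ (equivalently $s<2/(\sqrt{2A+B^2}+B)$) guaranteeing that every geodesic sphere up to that radius is convex for $\xi$; the precise constants $A=\tfrac43 K$ and the formula for $B$ in terms of $\theta'$ and $\min_p\Ric_p(R_\alpha)$ come out of optimizing this Riccati estimate.

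Once every geodesic sphere $\partial B_p(s)$, $s\le r$ with $r$ below the stated minimum, is $\xi$-convex, a standard argument (foliate the ball by such spheres and appeal to the fact that a ball admitting a convex-sphere foliation with the appropriate dividing-set behaviour is tight, cf.\ the 3--dimensional results used in \cite{EKM-sphere}) shows $(B_p(r),\xi)$ is tight, hence a Darboux ball, so $\tau_p(\xi,g)$ — and after taking the infimum over $p$, $\tau(M,\xi)$ — is at least $\min\{\tfrac12\injg,\ \pi/(2\sqrt K),\ 2/(\sqrt{2A+B^2}+B)\}$. The factor $\tfrac12$ on $\injg$ is needed so that the whole ball, not just its radius-$\injg$ interior, lies in a normal coordinate chart where the comparison geometry is valid. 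The main obstacle I anticipate is the bookkeeping in the second step: getting the \emph{sharp} constants $\tfrac43$ and the exact square-root expression for $B$ requires carefully combining the Hessian comparison for geodesic spheres with the first and second variation of $\alpha$ along radial geodesics, and correctly estimating the mixed term coming from $d\alpha$ evaluated on the radial direction and its $J$-image — this is the place where a crude bound would give a worse constant, and where the dimension-3 special structure ($\xi$ is a plane field, $J$ a rotation by $\pi/2$) is exploited to replace the dimension-$n$ factors of Theorem~\ref{thm:GeometricDarbouxHigherRough} by the clean quadratic in \eqref{eqn:main-bound}.
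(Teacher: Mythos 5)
Your proposal is built around $\xi$--convexity of geodesic spheres: use Riccati/Hessian comparison to show that each sphere $\partial B_p(s)$ is convex, track singularities of the characteristic foliation on those spheres, and invoke a ``convex-sphere foliation implies tight'' principle. That is essentially the strategy behind Theorem~\ref{thm:main-higher} in this paper (and Theorem~1.3 in \cite{EKM-sphere}), and it yields the bound $\tau\geq\conv(g)$. It is \emph{not} how Theorem~\ref{thm:geometric} is proved, and in fact the authors motivate this theorem precisely as an alternative method for situations where ``the convexity of the boundary assumption of Theorem~\ref{thm:main-higher} and the absence of closed Reeb orbits fail.'' The actual argument is as follows: fix a segment $\zeta$ of Reeb orbit through $p$, form the geodesic disks $\D_z$ normal to $\zeta$ and the tube $\T(r)$ they sweep out; Lemma~\ref{lemma:Tembedded} (a Toponogov comparison argument) shows $\T(r)$ is embedded for $r<\min(\frac12\injg,\frac{\pi}{2\sqrt K})$; Proposition~\ref{prop:t-estimate} shows the Reeb field stays transverse to all the disks $\D_z$ as long as $r<2/(\sqrt{2A+B^2}+B)$; finally Proposition~\ref{prop:geomtop_open} (which relies on Giroux's classification of contact structures on $T^2\times[0,1]$ via suspension characteristic foliations, Lemma~\ref{lem:ct}) shows that such a tube is contactomorphic to a domain in $(\R^3,\xistd)$, and one checks that $B_p(r)\subset\T(r)$.

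The concrete gap in your plan is the mechanism producing the term $2/(\sqrt{2A+B^2}+B)$. You propose to extract it from a Riccati comparison for the shape operator of geodesic spheres, but that comparison only controls $\nabla^2\mathsf{r}$, which is what enters the Levi form of Proposition~\ref{prop:levi-form} and yields the convexity-radius bound $\min(\injg,\pi/2\sqrt K)$; it does not produce the quadratic $1-Br-\tfrac12Ar^2$. In the paper that quadratic arises from a completely different estimate: one bounds $|\frac{d}{ds}\langle R_\alpha,\nD\rangle|$ along a radial geodesic in a normal disk, using $\|\nabla_u R_\alpha\|\leq B\|u\|$ (Proposition~\ref{3Dcurvature}, which is where $\Ric(R_\alpha)$ enters) and a Jacobi-field estimate $\g{\nD,J'}\leq\frac43|\sec|\,s$ (Lemma~\ref{lemma:nDJprime}, the source of the $4/3$), and then integrates to get $\langle R_\alpha,\nD\rangle\geq 1-Br-\tfrac12Ar^2$. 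The object being estimated is the angle between $R_\alpha$ and the normal to a fixed disk through $p$, not the second fundamental form of a geodesic sphere; the two estimates live in different parts of the argument and control different failures (loss of convexity of the sphere versus loss of transversality of $R_\alpha$ to the disk foliation). As written, your Riccati step would not be able to identify the constants, and the final ``foliate by convex spheres'' step has no pathway to use the transversality information that the quadratic encodes.
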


Theorems~\ref{thm:GeometricDarbouxHigherRough} and~\ref{thm:geometric}
provide bounds on $\delta(\xi, g)$ which are weaker than the one given
in Theorem~\ref{thm:main-higher} in dimension 3. But the ideas
underlying their proof extend to higher dimensions and can be used when
both the convexity of the boundary assumption of
Theorem~\ref{thm:main-higher} and the absence of closed Reeb orbits
fail, thus ruining the strategy used to prove
Theorem~\ref{thm:main-higher}. As an example of such a situation we show
the following estimate on the size of a standard neighborhood of a
closed Reeb orbit in a contact 3--manifold. 

\begin{theorem}
\label{thm:Reeb_nbhds}
Let $(M, \xi)$ be a contact 3--manifold and $g$ a complete Riemannian metric
that is compatible with $\xi.$ Let $\gamma$ be a closed Reeb orbit and $\T(r)$
an embeded  geodesic tube of radius $r$ about $\gamma.$ 
If $r$ is below the bound of Equation \eqref{eqn:main-bound} then the contact
structure restricted to $\T(r)$ is universally tight and, moreover, can be
embedded in $(S^1\times \R^2, \xistd=\ker (d\phi + r^2\, d\theta)).$
\end{theorem}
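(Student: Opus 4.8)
The plan is to reduce Theorem~\ref{thm:Reeb_nbhds} to Theorem~\ref{thm:geometric} by first establishing tightness of $(\T(r),\xi)$ and then promoting tightness to the desired explicit embedding into the model neighborhood $(S^1\times D^2,\xistd)$. For the first part, I would cover the geodesic tube $\T(r)$ by Darboux balls: since $r$ lies below the bound in \eqref{eqn:main-bound}, every point $p\in\T(r)$ has $\delta_p(\xi,g)\ge r$, so the contact structure is locally standard, and in particular the tube contains no overtwisted disk because any such disk would be contained in a ball of radius less than the bound, contradicting that such balls are Darboux (hence tight) balls. More carefully, one argues that any embedded overtwisted disk has a point of maximal distance from $\gamma$, and a small geodesic ball about that point — which is a Darboux ball — already must contain the ``interesting'' part of the disk, giving a contradiction; alternatively one simply invokes that a contact manifold all of whose points have Darboux balls of a uniform size larger than the ``diameter'' of every embedded disk cannot be overtwisted. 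This shows $(\T(r),\xi)$ is tight.

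Next I would use the tightness of the solid-torus neighborhood together with the classification of tight contact structures on $S^1\times D^2$ (Honda, Giroux) to produce the embedding. The key point is that $\T(r)$ is diffeomorphic to $S^1\times D^2$ with $\gamma=S^1\times\{0\}$ a closed Reeb orbit, so the contact structure near $\gamma$ has a well-defined rotation/twisting behavior; because $\gamma$ is a genuine Reeb orbit of a compatible metric, its characteristic foliation and the linearized Reeb flow give a Seifert-framed picture in which the contact planes twist monotonically. I would then appeal to the standard neighborhood theorem for closed Reeb orbits — which says a neighborhood of any closed Reeb orbit is contactomorphic to $(S^1\times D^2_\epsilon,\ker(d\phi+\rho^2\,d\theta))$ for small $\epsilon$ — and bootstrap: tightness of the whole tube $\T(r)$ plus the local model near $\gamma$ forces the contact structure on all of $\T(r)$ to embed into the infinite model $(S^1\times D^2,\ker(d\phi+r^2\,d\theta))$, because a tight contact structure on a solid torus with convex (or appropriately controlled) boundary and no Giroux torsion embeds into such a model with the boundary slope dictating the radius.

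The main obstacle I expect is the second step: passing from ``tight'' to the explicit embedding with the specific radius $r^2$ in $\ker(d\phi+r^2\,d\theta)$. Tightness alone gives an embedding into \emph{some} standard neighborhood, but matching the radius requires controlling the boundary slope of $\xi$ on $\partial\T(r)$, i.e.\ understanding how much the contact planes have rotated relative to the Reeb orbit by the time one reaches geodesic distance $r$. I would handle this by a comparison/monotonicity argument in the spirit of the proof of Theorem~\ref{thm:geometric}: the compatible metric controls the second fundamental form of the geodesic tubes $\T(s)$ for $s\le r$, hence the characteristic foliation on each $\partial\T(s)$, and one checks that the ``twisting'' accumulated is at most what the model with that radius exhibits, so the embedding can be arranged with boundary radius exactly $r$ (or, if one only needs an embedding, a slightly smaller radius suffices and the statement still holds). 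A subtlety here is ensuring no Giroux torsion develops inside $\T(r)$, which again follows because $r$ is below the Darboux bound: Giroux torsion would produce an incompressible pre-Lagrangian torus and eventually violate local standardness, so the tube is a minimally twisting tight solid torus and the classification yields the embedding into $(S^1\times D^2,\xistd)$ as claimed.
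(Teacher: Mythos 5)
Your proposed proof diverges substantially from the paper's argument, and the first step contains a genuine gap. The paper does \emph{not} derive tightness of $\T(r)$ by covering it with Darboux balls. Instead it observes that, by Proposition~\ref{prop:t-estimate}, when $r$ is below the bound of Equation~\eqref{eqn:main-bound} the Reeb vector field $R_\alpha$ stays transverse to each normal geodesic disk $\D_z$; since $\xi = R_\alpha^\perp$, the contact planes are therefore transverse to the vector field $\nD$ normal to the disks, and the theorem is an immediate application of Proposition~\ref{prop:geomtop_closed}. That proposition packages the whole embedding into the model $(S^1\times\R^2, \ker(d\phi + \rho^2\,d\theta))$ via characteristic foliations on the tori $\partial\T(s)$ being suspensions whose asymptotic direction starts meridional and rotates monotonically while staying pinned away from the meridian by $d(\nD)$; one then invokes Lemma~\ref{lem:ct}, which rests on Giroux's bifurcation theorem for contact structures on $T^2\times[0,1]$.

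The step in your proposal that fails is the deduction of tightness of $\T(r)$ from the existence of Darboux balls of radius $\geq r$ around each of its points. Having Darboux balls of a fixed radius at every point does not preclude overtwistedness of a larger domain assembled from them: an overtwisted disk need not fit inside any single Darboux ball, and the heuristic that a ball about the point of the disk farthest from $\gamma$ must contain the ``interesting'' part of the disk is not correct — the boundary Legendrian of an overtwisted disk is global data and can have diameter far exceeding the Darboux radius. The fallback claim you invoke (``a manifold whose Darboux balls are uniformly larger than the diameter of every embedded disk cannot be overtwisted'') is not a theorem and is not well-posed, since embedded disks have unbounded diameter. The bound~\eqref{eqn:main-bound} from Theorem~\ref{thm:geometric} controls the Darboux radius of geodesic \emph{balls}, not of tubes; a tube of small radius about a long closed Reeb orbit is not covered by that statement.

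Your second step — appealing to Honda/Giroux classification of tight solid tori and controlling the boundary slope — is a plausible alternative route, but it both presupposes the (unproved) tightness from step one and requires precisely the quantitative boundary-slope control that the paper obtains more directly. The paper sidesteps the classification entirely: the transversality of $\xi$ to $\nD$ (a pointwise consequence of $R_\alpha\perp\xi$ plus Proposition~\ref{prop:t-estimate}) yields non-singular suspension characteristic foliations on every torus $\partial\T(s)$ with asymptotic direction bounded away from the meridian, and Lemma~\ref{lem:ct} then produces the embedding into the rotationally symmetric model all at once, with tightness and the absence of Giroux torsion coming for free. If you want to repair your argument, the missing idea is exactly this transversality input, which replaces the Darboux-ball covering heuristic.
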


\subsection{Outline}

In Section~\ref{S:compatible_metrics} we define the notions of compatibility
between metrics and contact structures in an arbitrary dimension. 
The paper then splits into two logically independent threads which both depend
on Section~\ref{S:compatible_metrics}.  The first one is covered in Section~\ref{sec:tauPSestimate} which
compares Riemannian and almost complex convexity and proves Theorem~\ref{thm:main-higher} and Corollary~\ref{cor:ut-higher}. 
The second one starts in Section~\ref{sec:darboux} which
states the refined version of Theorem~\ref{thm:GeometricDarbouxHigherRough} and
proves it modulo a number of propositions which are proved in
Subsections~\ref{S:t_p-est} to~\ref{sec:cyl2ball}.
Section~\ref{S:geometric_method} explains how geometrical methods of
Section~\ref{sec:darboux} can be strengthened using topological methods which
are specific to dimension 3 and proves Theorems~\ref{thm:geometric}
and~\ref{thm:Reeb_nbhds}.

\subsection*{Acknowledgments}  The authors wish to thank Vladimir Krouglov for
useful correspondence and Yasha Eliashberg for pointing out the fillability of
integrable CR manifolds. 
We also thank the referee for pointing out several potential pitfalls and 
providing highly valuable input that improved the paper.
The second author is grateful to Chris Croke for
enjoyable lunch meetings, and many helpful suggestions to the project. The first
author was partially supported by the NSF Grant DMS-0804820 and DMS-1309073. 
The second author was partially supported by DARPA YFA-N66001-11-1-4132.  The 
third author was partially supported by the ANR grant ANR-10-JCJC 0102.

%%%%%%%%%%%%%%%%%%%%%%%%%%%
\section{Metrics compatible with contact structures}
%%%%%%%%%%%%%%%%%%%%%%%%%%%
\label{S:compatible_metrics}

Throughout this section and the rest of the paper we will interchangeably use
the notation $g(u,v)$ and $\g{u,v}$ to denote the Riemannian metric evaluated on
the vectors $u$ and $v.$

On a vector space $V$, let $\omega$ be a symplectic pairing, $g$  an
inner product and $J$ a complex structure. Recall that $(g, \omega, J)$ is
called a compatible triple if $g(\cdot, \cdot) = \omega(\cdot, J\cdot)$. In particular $J$ is
tamed by $\omega$, that is $\omega(v,Jv)>0$ for $v\not =0$, and we also
have $\omega(Ju,Jv)=\omega(u,v)$.
Note that two members of the triple uniquely determine the third one. We say
that $g$ and $\omega$ are compatible if there exists some (unique) $J$ such
that $(g, \omega, J)$ is a compatible triple.

In this paper we say that $g$ and $\omega$ are \dfn{weakly compatible} if there
is a positive number $c$ such that $cg$ and $\omega$ are compatible. In that
case, volume forms induced by $cg$ and $\omega$ on $V$ are equal hence $c$ and
then $J$ are uniquely determined by $\omega$ and $g$.

\begin{definition}
We say a contact structure $\xi$ and a Riemannian metric $g$ on a
$(2n+1)$--manifold $M$ are \dfn{weakly compatible} if there is a contact form
$\alpha$ for $\xi$ such that its Reeb vector field $R_\alpha$ is orthogonal to
$\xi$ and $g|_\xi$ and $(d\alpha)|_\xi$ are, pointwise, weakly compatible.
\end{definition}

We note right away that the contact form involved in the above definition is
unique up to a constant multiple because of the following well-known
lemma.

\begin{lemma}
Two contact forms on a connected manifold which have the same kernel and
parallel Reeb fields are constant multiple of each other.
\end{lemma}

\begin{proof}
Given two such forms $\alpha$ and $\alpha'$, the kernel condition means
there is some positive function $f$ such that $\alpha' = f\alpha$.
By Darboux's theorem for contact \emph{forms}, each point lies in a 
coordinates chart where $\alpha = dz - \sum y_i dx_i$ and its Reeb field is
$\partial_z$. We can compute:
\[
	\iota_{\partial_z}d\alpha' = 
	-\sum(y_i\partial_z f + \partial_{x_i}f)dx_i
	-\sum \partial_{y_i}fdy_i.
\]
So the Reeb field condition is equivalent to
$\partial_{y_i} f = 0$ and $y_i\partial_z f + \partial_{x_i}f = 0$ for all $i$.
Differentiating the second equation with respect to $y_i$ one sees that 
$\partial_z f=0$ and then that $\partial_{x_i}f = 0$ for all $i$. Thus $df = 0$ 
and $f$ is constant.
\end{proof}

One then has the following mostly tautological proposition.
\begin{proposition}
\label{proposition:determinemetric}
Let $\xi$ be a contact structure on a (2n+1)--manifold $M$ and $g$ a  weakly
compatible metric. Then fixing a contact form $\alpha$ as in the
definition of weakly compatible we denote by $c$ the positive function and $J$
the complex structure on $\xi$ such that $(cg|_\xi, d\alpha|_\xi, J)$ is a
compatible triple on $\xi$.

The  complex structure $J$ can be extended to a linear map on $TM$ as
follows
\begin{equation}\label{definephi}
\phi: TM\to TM: v\to Jv^\xi,
\end{equation}
where 
\begin{equation}\label{eq:xi-proj}
v^\xi=v-g(v,n)n
\end{equation}
is the component of $v$ lying in $\xi$ (here
$n=R_\alpha/\|R_\alpha\|$ is the unit normal vector to $\xi$). The metric $g$ may
then be expressed as
\begin{equation}\label{definegHiD}
g(u,v)=\frac{\rho}{\theta'} d\alpha(u,\phi(v)) + \rho^2 \alpha(u)\alpha(v),
\end{equation}
where $\rho=\| R_\alpha\|$ and $\theta'=\rho\, c$.
\xqed
\end{proposition}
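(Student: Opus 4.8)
The plan is to write down the almost complex structure explicitly, check the listed properties by pointwise linear algebra, prove uniqueness, and then read off the metric formula \eqref{definegHiD}; since $A$ and $c$ are smooth there are no analytic subtleties. Throughout, fix the contact form $\alpha$ from the definition of weak compatibility, fix a point, and work in $V=\xi_p$ with the symplectic form $\omega=d\alpha|_\xi$ and inner product $g|_\xi$. By the definition of $A$ one has $g(Au,v)=\omega(u,v)$ for all $u,v\in V$, so in particular $g(Au,v)=\omega(u,v)=-\omega(v,u)=-g(Av,u)$, i.e.\ $A$ is skew with respect to $g|_\xi$.

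Set $J:=A/c$. Then $J^2=A^2/c^2=-\mathrm{id}_\xi$, so $J$ is an almost complex structure on $\xi$, and $J$ inherits skewness: $g(Ju,v)=-g(u,Jv)$. Replacing $u$ by $Ju$ here and using $J^2=-\mathrm{id}$ gives $g(Ju,Jv)=g(u,v)$, whence $\|Jv\|=\|v\|$, and $g(v,Jv)=-g(v,Jv)$ forces $v\perp Jv$. Compatibility of $J$ with $d\alpha|_\xi$ is two short computations: $d\alpha(v,Jv)=g(Av,Jv)=c\,\|Jv\|^2>0$ for $v\ne0$, and $d\alpha(Ju,Jv)=g(AJu,Jv)=-c\,g(u,Jv)=c\,g(Ju,v)=g(Au,v)=d\alpha(u,v)$.

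For uniqueness, let $J'$ be another almost complex structure on $\xi$, compatible with $d\alpha|_\xi$, with $g(u,J'v)=-g(J'u,v)$ (the second displayed identity of the statement is then automatic, being $g$-skewness applied to $J'u$). Invariance $d\alpha(J'u,J'v)=d\alpha(u,v)$ rewrites, using $g(A\cdot,\cdot)=d\alpha$ and $g$-skewness of $J'$, as $-g(J'AJ'u,v)=g(Au,v)$ for all $v$, i.e.\ $J'AJ'=-A$; left-multiplying by $J'$ gives $AJ'=J'A$. Since $A$ and $J'$ are $g$-skew and commute, $AJ'$ is $g$-symmetric, and tameness $0<d\alpha(v,J'v)=-g(J'Av,v)$ shows $-AJ'$ is positive definite. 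From $AJ'=J'A$ one gets $(-AJ')^2=-A^2$, so the hypothesis $A^2=-c^2\,\mathrm{id}_\xi$ yields $(-AJ')^2=c^2\,\mathrm{id}_\xi$; a positive definite symmetric operator has a unique positive square root, hence $-AJ'=c\,\mathrm{id}_\xi$ and $J'=A/c=J$. I expect this to be the one step requiring genuine care: it is precisely here that the full weak-compatibility hypothesis $A^2=-c^2\,\mathrm{id}_\xi$ is used rather than just skewness of $A$ — without it the $\omega$-compatible, $g|_\xi$-skew almost complex structures on $V$ form a positive-dimensional family.

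Finally, for \eqref{definegHiD}, extend $J$ to $\phi$ by \eqref{definephi}. Since $n=R_\alpha/\rho$ is a unit normal to $\xi$, the vector $v^\xi$ of \eqref{eq:xi-proj} is the orthogonal projection of $v$ onto $\xi$, so $g(u,v)=g(u^\xi,v^\xi)+g(u,n)g(v,n)$. Because $\alpha$ vanishes on $\xi$ and $\alpha(R_\alpha)=1$, we have $g(u,n)=\rho\,\alpha(u)$, so the second summand is $\rho^2\,\alpha(u)\alpha(v)$. For the first, $\iota_{R_\alpha}d\alpha=0$ gives $d\alpha(u,\phi v)=d\alpha(u^\xi,Jv^\xi)$; applying $d\alpha(x,y)=g(Ax,y)=c\,g(Jx,y)$ (valid for $x,y\in\xi$) with $x=u^\xi$, $y=Jv^\xi$ and using $g(Jx,Jy)=g(x,y)$ gives $d\alpha(u,\phi v)=c\,g(Ju^\xi,Jv^\xi)=c\,g(u^\xi,v^\xi)$. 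Hence $g(u^\xi,v^\xi)=\tfrac1c\,d\alpha(u,\phi v)=\tfrac{\rho}{\theta'}\,d\alpha(u,\phi v)$ with $\theta'=\rho c$, and adding the two pieces gives \eqref{definegHiD}.
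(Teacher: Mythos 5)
Your proof is correct and follows essentially the same route as the paper's: set $J=A/c$, verify skewness, $g$-invariance and $d\alpha|_\xi$-compatibility by pointwise linear algebra, and obtain the metric formula from the orthogonal decomposition $g(u,v)=g(u^\xi,v^\xi)+g(u,n)g(v,n)$ together with $\rho^2\alpha=\iota_{R_\alpha}g$. You go beyond the paper in supplying an explicit uniqueness argument (commutation $AJ'=J'A$, positive-definiteness of $-AJ'$, unique positive square root); the paper asserts uniqueness but does not prove it, so this is a welcome addition and it is correct. One side remark is off, though: it is not true that without the hypothesis $A^2=-c^2\,\mathrm{id}_\xi$ the $d\alpha$-compatible, $g$-skew almost complex structures on $\xi_p$ would form a positive-dimensional family. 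Your own chain of deductions shows, for any inner product and symplectic form, that $-AJ'$ is $g$-symmetric positive definite with $(-AJ')^2=-A^2$, and a positive symmetric operator always has a unique positive symmetric square root, scalar or not; this yields the usual polar decomposition $J=A(\sqrt{-A^2})^{-1}$ and uniqueness in general. What the weak-compatibility hypothesis actually buys is not uniqueness but the closed form $J=A/c$ and, downstream, the appearance of the single scalar $\theta'=\rho c$ in \eqref{definegHiD}.
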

\begin{remark}\label{rem:rotation_speed}
Our choice of notation $\theta'$ in the above proposition can be
explained as follows. Let $\xi$ be a cooriented hyperplane field on any
Riemannian manifold $(M, g)$. Let $n$ be the positive unit normal
vector field of $\xi$ and let $\beta = g(n, \cdot)$ be the unit 1-form
defining $\xi$. The restriction of $d\beta$ to $\xi$ measures how fast
$\xi$ rotates along vector fields tangent to $\xi$. Indeed if $X$ and
$Y$ are two such vector fields with unit norm and $\varphi_t$ is the flow of
$X$ then the derivative at $t=0$ of the angle between $n$ and $\varphi_t^*Y$ is
$d\beta(X, Y)$ (this results from a short computation). If $\xi$ is a contact
structure and $g$ is weakly compatible with $\xi$ then
Equation~\eqref{definegHiD} and a short computation prove that $\theta' =
d\beta(u, Ju)$ for any unit vector $u$ in $\xi$. Hence $\theta'$ is indeed the
derivative of some angle.
\end{remark}

\begin{remark}\label{rem:phi2}
It is useful to notice that $\phi^2$ can be expressed as
\begin{equation}
\label{square}
\phi^2(v)=-v+\alpha(v)R_\alpha,
\end{equation}
for all vectors $v\in TM.$
\end{remark}

It is difficult to say much about weakly compatible metrics in higher
dimensions, but if we assume the length of $R_\alpha$ is constant then we can
understand something about the covariant derivatives of vectors with respect
to $R_\alpha$ and observe that the flow of the Reeb vector field traces out
geodesics. 
\begin{proposition}
\label{Rderivative}
Let $g$ be a metric weakly compatible with the contact structure $\xi$ on the
$(2n+1)$--manifold $M.$ Let $R_\alpha$ be the Reeb vector field associated to
the form $\alpha$ implicated in the definition of $g$ being weakly compatible
with $\xi.$ If $\rho=\|R_\alpha\|$ is constant, then 
\begin{equation}
\label{eqn:nabla_NN}
\nabla_{R_\alpha} R_\alpha = 0.
\end{equation}
In particular, flow lines of $R_\alpha$ are geodesics. Moreover, if $v$ is a
vector field tangent to $\xi$ then $\nabla_{R_\alpha}v$ is also tangent to
$\xi.$
\end{proposition}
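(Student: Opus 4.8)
The plan is to show that $\nabla_{R_\alpha}R_\alpha$ is $g$-orthogonal both to $R_\alpha$ itself and to every vector tangent to $\xi$, whence it vanishes; the geodesic claim and the ``moreover'' assertion then follow immediately. Throughout write $R=R_\alpha$ and recall from the relation $\rho\alpha=\iota_n g$ (with $n=R/\rho$) that $g(R,w)=\rho^2\alpha(w)$ for every vector $w$. Since $\rho$ is constant, metric compatibility of $\nabla$ gives
\[
2\,g(\nabla_R R,R)=R\bigl(g(R,R)\bigr)=R(\rho^2)=0,
\]
so $\nabla_R R$ is orthogonal to $R$, i.e.\ tangent to $\xi$.

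It then remains to check $g(\nabla_R R,v)=0$ for an arbitrary vector field $v$ tangent to $\xi$. First I would expand, again by metric compatibility,
\[
g(\nabla_R R,v)=R\bigl(g(R,v)\bigr)-g(R,\nabla_R v).
\]
The first term vanishes since $g(R,v)=\rho^2\alpha(v)\equiv 0$. For the second term I would write $g(R,\nabla_R v)=\rho^2\,\alpha(\nabla_R v)$ and use torsion-freeness, $\nabla_R v-\nabla_v R=[R,v]$, to split $\alpha(\nabla_R v)=\alpha(\nabla_v R)+\alpha([R,v])$. The bracket term is killed by the Cartan formula $d\alpha(R,v)=R(\alpha(v))-v(\alpha(R))-\alpha([R,v])$: here $\alpha(v)\equiv0$, $\alpha(R)\equiv1$, and $\iota_R d\alpha=0$, so $\alpha([R,v])=0$. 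The other term is $\alpha(\nabla_v R)=\rho^{-2}g(R,\nabla_v R)=\tfrac12\rho^{-2}\,v\bigl(g(R,R)\bigr)=0$, once more because $\rho$ is constant. Hence $g(R,\nabla_R v)=0$, so $g(\nabla_R R,v)=0$, and therefore $\nabla_R R=0$; in particular the integral curves of $R_\alpha$ are geodesics.

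Finally, for the last assertion let $v$ be tangent to $\xi$, which is equivalent to $g(v,R)=0$. Then, using $\nabla_R R=0$ just established,
\[
g(\nabla_R v,R)=R\bigl(g(v,R)\bigr)-g(v,\nabla_R R)=0-0=0,
\]
so $\nabla_R v$ is again tangent to $\xi$.

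The argument is entirely formal, so I do not expect a genuine obstacle beyond organizing the identities correctly. The only places where hypotheses are actually used are the constancy of $\rho$ (twice, to annihilate $R(\rho^2)$ and $v(\rho^2)$) and the defining properties of the Reeb field ($\alpha(R_\alpha)=1$ and $\iota_{R_\alpha}d\alpha=0$); everything else is the Koszul relation and torsion-freeness of the Levi-Civita connection. The one point to watch is that $v$ must be treated as a vector \emph{field} (not just a vector) so that the bracket and the derivatives above make sense, which is harmless since the identities are tensorial in $R_\alpha$.
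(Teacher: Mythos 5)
Your proof is correct and uses essentially the same ingredients as the paper's: the identity $\rho^2\alpha=\iota_{R_\alpha}g$, torsion-freeness of the Levi-Civita connection, the Reeb defining equations, and constancy of $\rho$. The only cosmetic difference is that you split the orthogonality check into the two cases $v=R_\alpha$ and $v\in\xi$, whereas the paper runs the equivalent computation once for an arbitrary $v$ starting from $d\alpha(R_\alpha,v)=0$.
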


\begin{proof}
We first notice that $\alpha$ and $R_\alpha$  are related by
\begin{equation}
\label{alphaRalpha}
\rho^2\alpha= \iota_{R_\alpha}g,
\end{equation}
since they agree on $\xi$ and on $R_\alpha.$
For any vector field $v$ the definition of the Reeb vector field implies
$d\alpha(R_\alpha, v) = 0$. On the other hand, using the assumption that $\rho$
is (a non-zero) constant, we compute
\begin{align*}
d\alpha(R_\alpha, v) 
&= R_\alpha\cdot \alpha(v)-v\cdot \alpha(R_\alpha)-\alpha([R_\alpha, v])\\
& = R_\alpha\cdot\alpha(v)-\alpha(\nabla_{R_\alpha} v-\nabla_v R_\alpha)\\
&=\rho^{-2} (R_\alpha\cdot \g{R_\alpha, v} -
\g{R_\alpha, \nabla_{R_\alpha} v-\nabla_v R_\alpha})\\
&=\rho^{-2} (\g{\nabla_{R_\alpha}R_\alpha, v} +
\g{R_\alpha, \nabla_{R_\alpha} v}-\g{R_\alpha, \nabla_{R_\alpha} v} +
\g{R_\alpha, \nabla_v R_\alpha})\\
&=\rho^{-2} (\g{\nabla_{R_\alpha}R_\alpha, v}),
\end{align*}
where the last equality follows since 
$2\g{R_\alpha, \nabla_v R_\alpha} = v\cdot \g{R_\alpha,R_\alpha}=0.$
So $\g{\nabla_{R_\alpha} R_\alpha, v} = 0$ for every $v$ and thus 
$\nabla_{R_\alpha} R_\alpha = 0.$

If $v$ is a vector field tangent to $\xi$ then 
\begin{align*}
\g{R_\alpha,\nabla_{R_\alpha}v} = \g{\nabla_{R_\alpha}R_\alpha, v} +
\g{R_\alpha, \nabla_{R_\alpha}v} = R_\alpha\cdot\g{R_\alpha, v} = 0,
\end{align*}
so $\nabla_{R_\alpha} v$ is tangent to $\xi.$
\end{proof}

We will not say more about weak compatibility in higher dimensions and
restrict our attention to the stronger notion of compatibility. 
\begin{definition}
\label{highDcompat}
We say a contact structure $\xi$ and a Riemannian metric $g$ on a
$(2n+1)$--manifold $M$ are \dfn{compatible} if there is a contact form $\alpha$
for $\xi$ such that its Reeb vector field $R_\alpha$ is orthogonal to $\xi,$ has
unit length $\| R_\alpha\|=1,$ and $g|_\xi$ and $(d\alpha)|_\xi$ are, pointwise,
weakly compatible and the function $\theta',$ defined in
Proposition~\ref{proposition:determinemetric}, is constant.

Equivalently, we can say $g$ and $\xi$ are \dfn{compatible} if there is a
contact form $\alpha$ for $\xi$ and a  complex structure $J$ on $\xi$
that is compatible with $d\alpha$ on $\xi,$ satisfies
\begin{equation}\label{eq:g-alpha}
g(u,v)=\frac 1{\theta'} d\alpha(u, Jv),
\end{equation}
for all $u,v\in \xi,$ where $\theta'$ is some positive constant and for which
the Reeb vector field $R_\alpha$ is the unit normal to $\xi.$ 
\end{definition}

Recall \cite[p. 63]{Blair02} that the {\it Nijenhuis torsion} $[T,T]$ of a
$(1,1)$--tensor field $T$ is a skew--symmetric tensor field of type $(1,2)$
defined as 
\begin{equation}
 [T,T](X,Y)=T^2[X,Y]+[TX,TY]-T[TX,Y]-T[X,TY].
\end{equation}

\begin{definition}
\label{integrableCR}
Let $\xi$ be a (cooriented) contact structure on a $(2n+1)$--manifold $M$ and $\alpha$ a
contact form for $\xi$. A complex structure $J$ on $\xi$ is called a
\dfn{$CR$--structure} (or more specifically a \dfn{strictly pseudo-convex
integrable $CR$--structure}) if and only if $J$ is tamed by $d\alpha$ and the
Nijenhuis torsion $[J,J]$ of $J$ on $\xi$ vanishes. The later explicitly says that
for all sections $v$ and $w$ of $\xi$
\begin{equation}\label{eqn:integrability}
	[J,J](v,w)=-[v,w]+[Jv, Jw]- J([Jv, w] + [v, Jw])=0,
\end{equation}
\end{definition}

\no Notice that $[J,J]$ is well defined as a $(1,2)$--tensor field on $\xi$ because 
\[
\alpha([Jv, w] + [v, Jw]) = -d\alpha(Jv, w)-d\alpha(v,Jw)
= -d\alpha(Jv,w)+d\alpha(Jv,w) = 0.
\]
thus 
\begin{equation}
	\label{eqn:integrability2}
	[Jv,w]+[v,Jw] \text{ is tangent to } \xi.
\end{equation}
Similarly one can see that $[Jv, Jw]-[v,w]$ is tangent to $\xi.$

After \cite{Blair02}, we call $(M,\alpha, g, J)$ a \dfn{contact metric structure} whenever $\alpha$ and $J$ define the compatible metric $g$ as above, equivalently we say that $(\alpha,g,J)$ is a compatible metric structure on $(M,\xi)$. Additionally, if $J$
and $\xi$ define a $CR$-structure we say that $(M,\alpha, g, J)$ is a
\dfn{$CR$--contact metric structure}.

Note that the above integrability condition is automatic for any  complex structure
on a plane field in dimension $3$. Indeed, we may choose a (local) basis
$\{v,\phi(v)\}$ of $\xi$  and observe that
\begin{equation}\label{eqn:N-phi-3d}
	[J,J](v,v)=0,\quad \text{and} \quad [J,J](v,Jv)=0,
\end{equation}
from which Equation~\eqref{eqn:integrability} follows.

We are now ready to state relations between the various operators and their
derivatives. Most of these formulas are well known in the literature, {\em cf.\ }\cite{Blair02}, though extra
terms occur due to the generality we are considering here. We also note that
there are some sign discrepancies with \cite{Blair02}, coming from the fact that we are always using positive contact
structures. Due to this, and for the convenience of the reader, we provide
proofs of these formulas here.

Recall that  the \dfn{second fundamental form} $\II$ of $\xi$ is the quadratic form on $\xi$
defined as follows  \cite{Reinhart}: for vectors $u$ and $v$ in $\xi_p=T_pM\cap \xi,$
\begin{equation}\label{definitionII}
\II(u, v) = \frac{1}{2}\g{\nabla_u v+\nabla_v u, R_\alpha}.
\end{equation}
We also define the endomorphism 
\begin{equation}
h = \half \Lie_{R_\alpha} \phi:TM\to TM,
\end{equation}
where $\Lie$ denotes the Lie derivative.
This endomorphism is a repackaging of $\II$ (computations from the proof of
Proposition~\ref{CRformula} show that $h(R_\alpha) = 0$ and, for $u$, $v$ in
$\xi$, $\II(u, v) = \g{u, \phi(hv)}$). 

\begin{lemma}\label{rangeh}
Let $(\alpha, g, J)$ be a compatible metric structure on  $(M,\xi).$ Then $h(TM)\subset \xi.$
\end{lemma}
\begin{proof}
First compute
\begin{equation}
\label{eq:formulaforh}
h(v) = \frac 12 (\Lie_{R_\alpha}\phi)(v) 
= \frac 12 (\Lie_{R_\alpha}(\phi(v))-\phi(\Lie_{R_\alpha}v))
= \frac 12 ([R_\alpha, \phi(v)]-\phi([R_\alpha, v])).
\end{equation}
Since for any section $u$ of $\xi$ we have 
$\alpha([R_\alpha, u])=-d\alpha(R_\alpha, u)=0,$ we see that $[R_\alpha, u]$ 
is in $\xi$ for any vector field $u$ in $\xi.$ The result follows since the
image of $\phi$ is contained in $\xi.$  
\end{proof}

The basic relations between metric and contact geometric quantities are derived 
in the following proposition. 

\begin{proposition}
\label{CRformula}
Let $(\alpha, g, J)$ be a compatible metric structure on the contact
manifold $(M,\xi).$ Let $R_\alpha$ be the Reeb vector field of $\alpha$, $h$ the
endomorphism defined above and $\II$ the second fundamental form of $\xi$. 
Then the following equations hold
\begin{align}
\label{eqn:symgh}
 & \g{h(u),v}  =\g{u,h(v)},\\
\label{eqn:nabla_vN}
 & \nabla_v R_\alpha  = \phi \left(({\theta'}/{2})\,  v - h(v)\right),\displaybreak[0] \\
 \label{eqn:JhhJ}
 & (\phi h + h \phi)(v)  = 0,\displaybreak[0] \\
 \label{eqn:trII}
 & \II(v,v) + \II(J v, J v)  = 0, \displaybreak[0] \\
\intertext{and}
 \label{eqn:nabla_NJ}
 & \nabla_{R_\alpha} (Jv)  = J(\nabla_{R_\alpha} v),
\end{align}
for any $v,u\in \xi$ (in the last equation, $v$ has to be a vector field
tangent to $\xi$).
\end{proposition}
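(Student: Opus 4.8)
The plan is to establish the five identities in a specific order, since later ones lean on earlier ones, and to derive everything from three basic inputs: the defining formula \eqref{eq:g-alpha} for $g$ on $\xi$, the constancy of $\theta'$ and $\|R_\alpha\|=1$, and Proposition~\ref{Rderivative} (which applies because compatibility is a special case of weak compatibility with $\rho\equiv 1$). First I would prove \eqref{eqn:symgh}, the symmetry of $h$. Using the formula \eqref{formulaforh} for $h$ and the fact that $R_\alpha$ preserves $\alpha$ (so its flow preserves $\xi$ and $d\alpha$ up to the Reeb dynamics), I would compute $\g{h(u),v}-\g{u,h(v)}$ by expanding the Lie derivative $\Lie_{R_\alpha}\phi$ and repeatedly using $\nabla_{R_\alpha}$-compatibility together with $d\alpha(u,v)=\theta'\g{u,\phi v}$ for $u\in\xi$; alternatively, differentiating $g(\phi u,\phi v)=g(u,v)-\alpha(u)\alpha(v)$ along $R_\alpha$ and using Remark~\ref{rem:phi2} for $\phi^2$ gives the skew-symmetry needed to conclude symmetry of $h$. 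I expect this to be the technical heart of the proposition; the bookkeeping with the Nijenhuis-type terms is where sign errors creep in, and getting the $\theta'/2$ coefficient right requires care.

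Next I would prove \eqref{eqn:nabla_vN}, the formula for $\nabla_v R_\alpha$. Since $\alpha = \iota_{R_\alpha} g$ (as $\|R_\alpha\|=1$ and $R_\alpha\perp\xi$), the tensor $v\mapsto \nabla_v R_\alpha$ is skew-adjoint (differentiate $\g{R_\alpha,R_\alpha}=1$) and, by Proposition~\ref{Rderivative}, sends $\xi$ into $\xi$ and kills $R_\alpha$. I would compute its symmetric and antisymmetric parts: pairing with $w\in\xi$ and using $2\II(v,w)=\g{\nabla_v R_\alpha,w\}$-type manipulations identifies the symmetric part with (minus) the second fundamental form, hence with $-\phi h$ after unwinding the definition $h=\tfrac12\Lie_{R_\alpha}\phi$; the antisymmetric part is computed from $d\alpha(v,w)=\g{\nabla_v R_\alpha,w}-\g{\nabla_w R_\alpha,v}$ together with $d\alpha(v,w)=\theta'\g{v,\phi w}$, producing the $(\theta'/2)\phi v$ term. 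Assembling gives $\nabla_v R_\alpha=\phi((\theta'/2)v-h(v))$, and I would also note this simultaneously reproves the relation between $h$ and $\II$, namely $\II(v,v)=-\g{\phi h(v),v}$.

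Once \eqref{eqn:symgh} and \eqref{eqn:nabla_vN} are in hand, the remaining three identities are essentially algebraic consequences. For \eqref{eqn:JhhJ}, I would differentiate $\phi^2 v=-v+\alpha(v)R_\alpha$ along $R_\alpha$ — equivalently apply $\Lie_{R_\alpha}$ — and use that $\Lie_{R_\alpha}\alpha=0$ and $\Lie_{R_\alpha}R_\alpha=0$, giving $(\Lie_{R_\alpha}\phi)\phi+\phi(\Lie_{R_\alpha}\phi)=0$ on $\xi$, i.e. $h\phi+\phi h=0$. For \eqref{eqn:trII}, I would combine $\II(v,v)=-\g{\phi h(v),v}$ with $\phi h=-h\phi$ from \eqref{eqn:JhhJ} and with $\g{h(u),v}=\g{u,h(v)}$ from \eqref{eqn:symgh} and $g(Ju,Jv)=g(u,v)$: a short computation shows $\II(Jv,Jv)=-\g{\phi h(Jv),Jv}=\g{h\phi(Jv),Jv}=-\g{h v, \phi v}\cdot(\pm1)$ cancels against $\II(v,v)$. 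Finally, for \eqref{eqn:nabla_NJ}, I would differentiate $\g{\phi v,w}$ and $\alpha(\phi v)=0$ along $R_\alpha$; using $\nabla_{R_\alpha}$ preserves $\xi$ (Proposition~\ref{Rderivative}), preserves the metric, and $\Lie_{R_\alpha}\alpha=0$, one gets $\g{\nabla_{R_\alpha}(\phi v)-\phi\nabla_{R_\alpha}v,w}=\g{(\Lie_{R_\alpha}\phi)v,w}$ on one side; combining the symmetry from \eqref{eqn:symgh} with the skew-symmetry of $h$ relative to $\phi$ forces this to vanish, giving $\nabla_{R_\alpha}(Jv)=J\nabla_{R_\alpha}v$. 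Throughout, the only genuine obstacle is the first identity; after that the logical dependencies make the rest routine.
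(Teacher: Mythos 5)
Your plan diverges substantially from the paper's. The paper bases the entire proposition on the explicit Koszul-type formula for $\g{(\nabla_u\phi)(v),w}$ in Lemma~\ref{lem:phiderivative}: specializing that formula at $u=R_\alpha$ gives $(\nabla_{R_\alpha}\phi)|_\xi=0$ at once, and \eqref{eqn:symgh}, \eqref{eqn:nabla_vN}, \eqref{eqn:nabla_NJ} all flow from it. You instead propose Lie derivatives along $R_\alpha$ and a symmetric/antisymmetric split of $v\mapsto\nabla_v R_\alpha$. Parts of that are sound, and a couple are arguably cleaner than the paper's: deriving \eqref{eqn:JhhJ} from $\Lie_{R_\alpha}(\phi^2)=\Lie_{R_\alpha}(-\mathrm{Id}+\alpha\otimes R_\alpha)=0$ is a tidy two-liner, and for \eqref{eqn:nabla_vN} the antisymmetric part of $\g{\nabla_\bullet R_\alpha,\bullet}$ on $\xi$ is $\tfrac12 d\alpha|_\xi$ (since $\alpha=\iota_{R_\alpha}g$) while the symmetric part is $\tfrac12\Lie_{R_\alpha}g|_\xi=-\II$, and $\Lie_{R_\alpha}g|_\xi=\tfrac{2}{\theta'}d\alpha(\cdot,h\,\cdot)=-2\g{\cdot,\phi h\,\cdot}$, which supplies the $-\phi h$ term.

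There is, however, a genuine error in your argument for \eqref{eqn:nabla_NJ}. The identity you write down, $\g{\nabla_{R_\alpha}(\phi v)-\phi\nabla_{R_\alpha}v,w}=\g{(\Lie_{R_\alpha}\phi)v,w}$, is false: the left side is $\g{(\nabla_{R_\alpha}\phi)(v),w}$ and the right side is $2\g{h(v),w}$, and these differ precisely by the correction between covariant and Lie derivative, $(\Lie_{R_\alpha}\phi)(v)-(\nabla_{R_\alpha}\phi)(v)=-\nabla_{\phi v}R_\alpha+\phi(\nabla_v R_\alpha)$, which is not zero. Nor do symmetry of $h$ and $h\phi=-\phi h$ "force" $\g{(\Lie_{R_\alpha}\phi)v,w}=2\g{hv,w}$ to vanish; $h$ vanishes only in the $K$-contact case. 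What \eqref{eqn:nabla_NJ} actually asserts is $(\nabla_{R_\alpha}\phi)|_\xi=0$, which is a genuinely different statement from $h=0$. It \emph{can} be recovered from your earlier identities — plugging \eqref{eqn:nabla_vN} and \eqref{eqn:JhhJ} into the correction term shows it contributes exactly $2h(v)$, cancelling $(\Lie_{R_\alpha}\phi)(v)$ and leaving $(\nabla_{R_\alpha}\phi)(v)=0$ — but that is not the argument you gave. Two lesser issues: your opening claim that differentiating $\g{R_\alpha,R_\alpha}=1$ shows $v\mapsto\nabla_vR_\alpha$ is skew-adjoint is wrong (it only shows $\nabla_vR_\alpha\perp R_\alpha$; you then correctly exhibit a nonzero symmetric part, contradicting your own premise); and your proposed route to \eqref{eqn:symgh} is the vaguest of the five and would go much more smoothly after \eqref{eqn:JhhJ}, by first reading off symmetry of $\phi h$ from symmetry of $\Lie_{R_\alpha}g$ and then converting that, via $h\phi=-\phi h$ and $\phi^2|_\xi=-\mathrm{Id}$, into symmetry of $h$.
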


\no We will also need the following relations between curvature and contact geometric properties.

%The following ensure we keep the same Equation numbers despite the fact that
%Patrick removed an irrelevant equation.
\stepcounter{equation}

\begin{proposition}
\label{3Dcurvature}
Let $(\alpha, g, J)$ be a compatible metric structure on  $(M,\xi)$, then
for any vectors $u$, $v$ and $w$
\begin{equation}
	\label{eq:nabla-reeb}
		\|\nabla R_\alpha\| = B
\end{equation}
and
\begin{equation}
	\label{eq:nabla-phi-norm}
	|\g{(\nabla_u \phi)(v), w}| \leq 
	\left(\half\|[J, J]\| + 2	B\right)\|u\|\cdot\|v\|\cdot\|w\|
\end{equation}
where 
\[
B = \B
\]
and $K(u,v)$ is the sectional curvature of the plane spanned by the unit vectors $u$ and $v$. 
\end{proposition}

\no A key to proving the above results is the computation of the covariant derivative
of $\phi.$
\begin{lemma}\label{lem:phiderivative}
Let $(\alpha, g, J)$ be a compatible metric structure on  $(M,\xi).$ Then for any vectors $u,v$ and $w$ the following equation
holds: 
\begin{equation}
\label{eqn:nabla-phi}
\begin{split}
\g{(\nabla_u \phi) (v), w} &= 
\half	\g{u^\xi, \; [J, J]\left(v^\xi, Jw^\xi\right) +
		\left(\left(2h - \theta' Id\right) \wedge \alpha\right) (v, w)}
\end{split}
\end{equation}
where, for any endomorphism field $A$, we set 
$\left(A \wedge \alpha\right)(v, w) = \alpha(w)A(v) - \alpha(v)A(w)$.
\end{lemma}

\begin{proof}[Proof of Proposition~\ref{CRformula}]
Throughout this proof we will repeatedly use the formula for $\phi^2$ given in
Equation~\eqref{square} without further notice.

We specialize Equation~\eqref{eqn:nabla-phi} to $u = R_\alpha$ to get that
$\nabla_{R_\alpha} \phi= 0$. Thus using
Equation~\eqref{eq:formulaforh} we see that
\begin{align*}
\g{2\, h(v), w} &= \g{[R_\alpha, \phi(v)] - \phi([R_\alpha, v]), w} =
 \g{(\nabla_{R_\alpha}\phi)(v)-\nabla_{\phi(v)} R_\alpha+\phi(\nabla_vR_\alpha),w}\\
&= \g{\phi(\nabla_vR_\alpha)-\nabla_{\phi(v)} R_\alpha,w}=
\g{R_\alpha, \nabla_{\phi(v)} w+\nabla_v \phi(w)}=\alpha(\nabla_{\phi(v)} w+\nabla_v \phi(w)).
\end{align*}
Note that since $d\alpha(\phi(v),w)=-d\alpha(v,\phi(w))$, we have $\alpha([\phi(v),w])+\alpha([v,\phi(w)])=0$ and therefore
\begin{align*}
\g{2\, h(v), w} &=\alpha(\nabla_{w} \phi(v)+\nabla_{\phi(w)} v)=\g{v, 2\, h(w)}.
\end{align*}
This proves
Equation~\eqref{eqn:symgh}.

For Equation~\eqref{eqn:nabla_vN} we specialize Equation~\eqref{eqn:nabla-phi}
to $v = R_\alpha$ and obtain 
\[
\begin{split}
\g{(\nabla_u \phi)	(R_\alpha), w} & =  
\half	\g{u^\xi, \; 	\left(2h - \theta' Id\right) \wedge \alpha(R_\alpha, w)} \\
 & =-\g{u^\xi, h(w)} + \theta'/2\g{u^\xi, w^\xi}\\
 & =-\g{h(u),w}+ \theta'/2\g{u^\xi, w^\xi}
\end{split}
\]
where the we used the symmetry Equation~\eqref{eqn:symgh} of $h$ in the last
equality. 
Since $\phi(R_\alpha)=0$, the left hand-side of the above is 
$\g{\nabla_u (\phi(R_\alpha)) - \phi(\nabla_u R_\alpha), w} = -\g{\phi
  (\nabla_u R_\alpha), w}.$ 
 Thus $\phi(\nabla_u R_\alpha)=h(u)-(\theta'/2) u^\xi.$ 
Applying $\phi,$ noting that $\nabla_u R_\alpha$ is tangent to $\xi$ and 
recalling that $\phi=J$ on $\xi,$ we establish Equation~\eqref{eqn:nabla_vN}.

We now prove that $h$ anti-commutes with $\phi,$ that is we prove
Equation~\eqref{eqn:JhhJ}
\begin{align*}
-\theta'\g{\phi(v), u}&+\g{u, h(\phi(v))+\phi(h(v))}
=\g{\frac{\theta'}{2}\phi(u) - \phi(h(u)), v} -	
\g{\frac{\theta'}{2}\phi(v) - \phi(h(v)), u}\\
&=\g{\nabla_u R_\alpha, v} - \g{\nabla_v R_\alpha, u}
=-\g{R_\alpha, [u,v]}=d\alpha(u,v)=-\theta'\g{u,\phi(v)}.
\end{align*}
The second equality follows from Equation~\eqref{eqn:nabla_vN} and the last
equality follows from the fact that for all $u$ and $v$ in $TM$ we have
\begin{equation}
\label{galpha} 
g(u,\phi(v))=-\frac 1{\theta'} d\alpha(u,v),
\end{equation}
which in turn follows from Equation~\eqref{definegHiD} and the fact that
$\phi(v)\in \xi.$ Continuing we see $\langle u, h(\phi(v))$ $+\phi(h(v))\rangle=0$ for all
$u\in \xi,$ thus establishing Equation~\eqref{eqn:JhhJ}.

To prove Equation \eqref{eqn:trII} (which is the only one which appears to be
new), we compute for any $v\in \xi$ (i.e. vector field extension of $v$)
\begin{align*}
\II(v,v) + \II(Jv, Jv) &= \g{\nabla_v v, R_\alpha} + \g{\nabla_{J v} J v,R_\alpha} 
= -\g{v, \nabla_v R_\alpha} - \g{J v, \nabla_{J v} R_\alpha}  \\
&= \g{v, -\theta'/2 J v - h (J v)} + \g{J v,
-\theta'/2 J^2 v - h (J^2 v)} \\
&= -\g{v, h (J v)} + \g{J v,  h (v)} =-\g{v, h (J v)} + \g{v,  h (J v)} =0,
\end{align*}
where the third equality follows from Equation~\eqref{eqn:nabla_vN} and
Equation~\eqref{eqn:JhhJ} has been used repeatedly.

We now prove Equation \eqref{eqn:nabla_NJ}. Let $v$ be a vector field tangent
to $\xi$. We use that $Jv = \phi(v)$ and $\nabla_{R_\alpha} \phi = 0$ to get
\[
\nabla_{R_\alpha}(Jv) = \nabla_{R_\alpha}(\phi(v)) = \phi(\nabla_{R_\alpha}v)
\]
In addition $\nabla_{R_\alpha}v$ is in $\xi$ by Proposition~\ref{Rderivative}. 
\end{proof}

\begin{proof}[Proof of Lemma~\ref{lem:phiderivative}]
To establish Equation~\eqref{eqn:nabla-phi} we notice that both sides of the
equation are tensors in $u,v$ and $w,$ so it suffices to establish the result
when $u,v$ and $w$ are chosen to be elements of a basis for $TM.$ We choose
(local)
vector fields $v_1,\ldots, v_n$ in $\xi$ such that $v_1, Jv_1,\ldots, v_n,Jv_n$ is an
oriented orthonormal basis for $\xi.$ In the computation below we assume
that $u,v$ and $w$ are chosen from the set $\{v_1, Jv_1,\ldots, v_n,
Jv_n, R_\alpha\}.$ Notice that this implies that the lengths of $u,v, w,
\phi(u),\phi(v)$ and $\phi(w)$ are constant as are their inner products with
each other.

Equation~\eqref{galpha} says that $\theta' g(u,\phi(v))=-d\alpha(u,v)$ for any
vectors $u$ and $v.$ Using this and the fact that $d\alpha$ is closed we have
\begin{equation}
\label{eqn:Phi-closed}
\g{[u, v], \phi(w)} + \g{[w, u], \phi(v)} + \g{[v, w], \phi(u)} = 0.
\end{equation}

Recall, the Koszul formula states that for any vector fields $u,v$ and $w$ 
\[
2\g{\nabla_u v, w}=u\cdot \g{v,w}+v\cdot \g{u,w} -w\cdot\g{u,v}+ 
\g{[u,v],w}+\g{[w,u],v}+\g{[w,v],u}.
\]
Using this, Equation~\eqref{square} to compute $\phi^2$ and the previous
equation we can begin our computation of $\nabla \phi$ as follows
\begin{align*}
2\g{(\nabla_u \phi)	(v), w} &=  2\g{\nabla_u (\phi(v)) - \phi(\nabla_u v), w} =
2\g{\nabla_u (\phi(v)), w} + 2\g{\nabla_u v- \alpha(\nabla_u v) 
R_\alpha, \phi(w)} \\
&= \g{[u, \phi(v)], w} + \g{[w, u], \phi(v)} + \g{[w, \phi(v)], u} \\
&\qquad\quad + \g{[u, v], \phi(w)} + \g{[\phi(w), u], v} + \g{[\phi(w), v], u} \\
&=
\g{[u, \phi(v)], w} + \g{[w, \phi(v)], u}\\
&\qquad\quad + \g{[\phi(w), u], v} + \g{[\phi(w), v], u} - \g{[v, w], \phi(u)}. 
\end{align*}
Substituting $\phi(v)$ for $v$ and $\phi(w)$ for $w$ in
Equation~\eqref{eqn:Phi-closed} and using Equation~\eqref{square} to compute
$\phi^2$ we learn
\begin{align*}
-\g{[u, \phi(v)], w} &+ \alpha(w)\g{[u, \phi(v)], R_\alpha} 
-\g{[\phi(w), u], v}\\ &+ \alpha(v)\g{[\phi(w), u], R_\alpha} + 
\g{[\phi(v), \phi(w)], \phi(u)}
= 0.
\end{align*}
This may be used to eliminate the first and third term in the preceding equation
which becomes
\begin{align}\label{eq:intermediate-phi}
2\g{(\nabla_u \phi) (v), w} &=
\g{[w, \phi(v)], u} + \g{[\phi(w), v], u} - \g{[v, w], \phi(u)} 
+ \alpha(w)\g{[u, \phi(v)], R_\alpha} \\ 
\notag &\qquad + \alpha(v)\g{[\phi(w), u], R_\alpha} + \g{[\phi(v), \phi(w)], \phi(u)}.
\end{align}
Note that 
%%%%%%%%%%%%%%%%%%%%%%%%%
\begin{equation}\label{eq:R-dalpha-theta'}
\begin{split}
\g{[u, \phi(v)], R_\alpha} & = \alpha([u, \phi(v)]) = -d\alpha(u, \phi(v))\\
& = -d\alpha(u^\xi, J v^\xi) = -\theta'\g{u^\xi, v^\xi}.
\end{split}
\end{equation}
Using this and the analogous formula with $w$ instead of $v$ allows to rewrite
Equation~\eqref{eq:intermediate-phi} as:
\begin{equation}\label{eq:next-intermediate-phi}
\begin{split}
2\g{(\nabla_u \phi) (v), w} &=
\g{[w, \phi(v)], u} + \g{[\phi(w), v], u} - \g{[v, w], \phi(u)} +
\g{[\phi(v), \phi(w)], \phi(u)} \\
&\quad - \theta'\g{u^\xi, \left(Id \wedge \alpha\right) (v, w)} 
\end{split}
\end{equation}
We want to rewrite the first line of this equation in terms on $[J, J]$.
In the following computation we use the definition of $[J, J]$, the fact
that, for any $u$,  $u^\xi = u - \alpha(u)R_\alpha$, $\phi(u) = Ju^\xi$ 
and the fact that $[Jv^\xi, Jw^\xi] - [v^\xi, w^\xi]$ is tangent to $\xi$ for any
vector fields $v$ and $w$.
\begin{align*}
[J, J](v^\xi, Jw^\xi) &= -[v^\xi, Jw^\xi] - [Jv^\xi, w^\xi] - J([Jv^\xi, Jw^\xi] - [v^\xi, w^\xi]) \\
&= -[v - \alpha(v)R_\alpha, \phi(w)] - [\phi(v), w - \alpha(w)R_\alpha] \\
&\quad -\phi\big([\phi(v), \phi(w)] - [v - \alpha(v)R_\alpha, w -
\alpha(w)R_\alpha]\big) 
\end{align*}
Next we use that $\alpha(v)$ and $\alpha(w)$ are constant to get them
out of Lie brackets and we use Equation~\eqref{eq:formulaforh} to get
\[
[J, J](v^\xi, Jw^\xi) =
-[v, \phi(w)] - [\phi(v), w] -\phi([\phi(v), \phi(w)]) + \phi([v, w]) \\
 -2 \left(h \wedge \alpha\right) (v, w).
\]
Comparing the above with Equation~\eqref{eq:next-intermediate-phi} and
noticing that both $[J, J]$ and $h \wedge \alpha$ take values in $\xi$
gives the announced Equation~\ref{eqn:nabla-phi}.
\end{proof}

\begin{proof}[Proof of Proposition~\ref{3Dcurvature}]
Because $h$ restricted to $\xi$ is symmetric with respect to $g$, it is
diagonalizable in a orthonormal basis. Assume that we have a local vector field 
$v$ with $h(v)=\lambda v,$ for some function $\lambda.$ Since
$h$ anti-commutes with $J$ we see $h(Jv)=-\lambda Jv.$ So locally we can get an
orthonormal frame $\{v_1, Jv_1,\ldots, v_n, Jv_n\}$  for $\xi$ such that
$hv_i = \lambda_i v_i$ and $hJv_i = -\lambda_i Jv_i$ for some non-negative
functions $\lambda_i$.

Let $v=v_i$, and $\lambda=\lambda_i$ for some $1\leq i\leq n$. From
Equation~\eqref{eqn:nabla_vN} we have
\begin{gather*}
\nabla_v R_\alpha =(\theta'/2-\lambda)Jv, \\
\nabla_{Jv} R_\alpha = -(\theta'/2 + \lambda) v.
\end{gather*}
So, for any vector $u$ in $\xi$ we can write 
$u = \sum (a_i v_i + b_i Jv_i)$ and the above equation gives
\[
\|\nabla_u R_\alpha\|^2 = 
\sum\left(\left(\frac{\theta'}2 + \lambda_i\right)^2b_i^2 +
\left(\frac{\theta'}2 - \lambda_i\right)^2a_i^2 \right)
\]
so, using also Equation~\eqref{eqn:nabla_NN} claiming 
$\nabla_{R_\alpha} R_\alpha = 0$ and the fact that each $\lambda_i$ is
non-negative and $\theta'$ is positive, we get
$\|\nabla R_\alpha\| = \theta'/2 + \max_i\lambda_i$.

Let $Q$ be the quadratic form on $\xi$ defined by
\[
	Q(u) = \g{\Rm(u, R_\alpha)R_\alpha, u} + \g{\Rm(Ju, R_\alpha)R_\alpha, Ju}
\]
where $\Rm$ is the Riemann curvature tensor defined by
$\Rm(X, Y)Z = \nabla_X(\nabla_Y Z) - \nabla_Y(\nabla_X Z) - \nabla_{[X, Y]}Z$
so both terms are sectional curvatures if $\|u\| = 1$. Let $u$ be a vector
field tangent to $\xi$. Because of Proposition~\ref{Rderivative} and the
symmetry of the Levi-Civita connexion, $[R_\alpha, u]$ is also tangent to $\xi$.
In the following computation we get rid of Lie brackets using that the
Levi-Civita connexion  is torsion free and transform all terms of type
$\nabla_v R_\alpha$ using Proposition~\ref{Rderivative} and
Equation~\eqref{eqn:nabla_vN}. We also use that $h$ and $J$ anticommute
(Equation~\eqref{eqn:JhhJ}) and Equation~\eqref{eqn:nabla_NJ}.
\begin{align*}
\g{\Rm(u, R_\alpha)R_\alpha, u} &= 
\g{\nabla_{[R_\alpha,u]} R_\alpha, u} - \g{\nabla_{R_\alpha}(\nabla_u R_\alpha), u}
+ \g{\nabla_u(\nabla_{R_\alpha} R_\alpha), u}\\
&= \g{J(\theta'/2-h)[R_\alpha, u], u}
- \g{\nabla_{R_\alpha}(J(\theta'/2-h)u), u} \\
&= \g{J(\theta'/2-h)\nabla_{R_\alpha} u + ((\theta'/2)^2-h^2)u, u}
- \g{\nabla_{R_\alpha}(J(\theta'/2-h)u), u} \\
&= \g{((\theta'/2)^2-h^2)u - (\nabla_{R_\alpha}h)Ju, u}.
\end{align*}
Adding the same formula applied to $Ju$ leads to
\[
	Q(u) = 2\g{((\theta'/2)^2-h^2)u, u}.
\]
So $Q$ is a quadratic form on $\xi$ having eigenvalues (with respect to the
inner product $g$) $\mu_i = 2((\theta'/2)^2-\lambda_i^2)$. The minimum
of $Q$ on the unit sphere is 
$\min(\mu_i) = 2(\theta'/2)^2 - 2\max(\lambda_i)^2 $.

So we proved
\[
	\max \lambda_i = \sqrt{\left(\frac{\theta'}2\right)^2 - \frac12  \min_{u \in \xi,
	\|u\| = 1} Q(u)}
\]
(in particular, the term under the square root is non-negative).
This gives the formula we wanted for $\|\nabla R_\alpha\|$.

Regarding Equation~\eqref{eq:nabla-phi-norm}, we note that
Equation~\eqref{eqn:nabla-phi} gives:
\[
|\g{(\nabla_u \phi)(v), w}| \leq 
\half\left(\|[J, J]\| + 2\|2h - \theta'Id\|\right)\,\|u\|\cdot\|v\|\cdot\|w\|.
\]
and clearly $\|2h - \theta'Id\| = 2\max \lambda_i + \theta'$ so we have
the announced bound.
\end{proof}

%%%%%%%%%%%%%%%%%%%%%%%%%%%
\section{A tightness radius estimate}
%%%%%%%%%%%%%%%%%%%%%%%%%%%
\label{S:background}
\label{sec:tauPSestimate}

This section is devoted to the proof of Theorem \ref{thm:main-higher}. 
It uses standard holomorphic curves arguments and a key comparison of
Riemannian and almost complex convexity in symplectizations of contact metric
manifolds. This comparison is explained in Subsection~\ref{ss:contactconvexity} after we
recall a few results about Riemannian convexity in Subsection~\ref{ss:riemannianconvexity}. We
then recall the definition of $PS$-overtwisted manifolds and their relevant
properties in Subsection~\ref{SS:PS} before proving the theorem in Subsection~\ref{SS:proof-tight-radius}.

%%%%%%%%%%%%%%%%%%%%%%%%%%%%%%%%%%%%%%%%%%%%%%%%%%%%%%
\subsection{Convexity in Riemannian geometry}\label{ss:riemannianconvexity}
%%%%%%%%%%%%%%%%%%%%%%%%%%%%%%%%%%%%%%%%%%%%%%%%%%%%%%

Let $S$ be a cooriented hypersurface in a Riemannian manifold $(M^n,g)$. 
We say that $S$ is \dfn{geodesically convex at $p$} if its second fundamental
form is positive definite at $p$. This has to do with convexity because it
forces geodesics which are tangent to $S$ at $p$ to stay (locally) on one side
of $S$. Let $f$ be a function defined on a small neighborhood $U$ of $p$ such
that $S \cap U$ is a regular level set of $f$ and $\nabla f$ defines the coorientation
of $S$. Then $S$ is geodesically convex at $p$ if and only if the Hessian of
$f$ at $p$ is positive definite.

The hypersurfaces we will consider are geodesic spheres so the relevant
functions are distances from points. To any real number $k$, one associates
the reference function

\begin{equation}
\label{eqn:cotsdef}
 \begin{split}
  \ct_k(r) & =\begin{cases}
		\sqrt{k}\cot(\sqrt{k} r)\, , & \qquad \text{if $k > 0$}\\
  \frac{1}{r}, & \qquad \text{if $k = 0$}\\
  \sqrt{-k}\coth(\sqrt{-k} r), &\qquad \text{if $k < 0$.}
  \end{cases}\\
 \end{split}
\end{equation} 
\begin{proposition}[see e.g. {\cite[Theorem 27 page 175]{Petersen}}]
\label{prop:riem_convexity}
Let $(M^n,g)$ be a Riemannian manifold with $\sec(g)\leq K$ for some real
number $K$. Let $r$ be any radius below the injectivity radius $\injg$ and $p$
any point in $M$.
If $K$ is non-positive, then the Hessian of the distance function 
\[
\mathsf{r}_p:M^n\to \R: q\mapsto d(p,q)
\]
is positive definite on the ball of radius $r$ about $p,$ $B_p(r)$.
If $K$ is positive then the same holds provided $r$ is less than 
$\frac{\pi}{2\sqrt K}$.

More generally, the Hessian of $\mathsf{r}$ satisfies 
\[
\nabla^2\mathsf{r}_p \geq \ct_K(r) g
\]
and $\ct_K(r)$ is positive whenever $K$ is non-positive or $K$ is positive and 
$r < \frac{\pi}{2\sqrt K}$.
\qed
\end{proposition}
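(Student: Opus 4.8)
The plan is to reduce Proposition~\ref{prop:riem_convexity} to the classical Hessian comparison theorem for the distance function recalled in \cite{Chavel06}, and to reconstruct the latter via Jacobi fields and the index form. Fix $q\in B_p(r)$ with $q\ne p$ and set $\rho_0=d(p,q)<r<\injg$, so that $\mathsf r_p$ is smooth near $q$, with $\nabla\mathsf r_p$ equal there to the unit velocity field of the unique minimizing geodesic $\gamma\colon[0,\rho_0]\to M$ from $p$ to $q$. Differentiating $|\nabla\mathsf r_p|^2\equiv 1$ gives $\nabla^2\mathsf r_p(\nabla\mathsf r_p,\cdot)=0$, so the Hessian is automatically degenerate along the radial direction; the substantive content is therefore the lower bound
\[
\nabla^2\mathsf r_p(v,v)\ \ge\ \ct_K(\rho_0)\,|v|^2\qquad\text{for every }v\in T_q S_p(\rho_0)=\bigl(\nabla\mathsf r_p(q)\bigr)^\perp,
\]
where $S_p(\rho_0)=\partial B_p(\rho_0)$ is the geodesic sphere through $q$ and the assertions about positive definiteness and about $\nabla^2\mathsf r_p\ge\ct_K(r)g$ are read on these sphere tangent spaces, which is exactly the form fed into Lemma~\ref{charicterizeconvex}. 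Granting the displayed inequality, $\ct_K$ is monotonically decreasing on the interval where it is defined, so $\ct_K(\rho_0)\ge\ct_K(r)$ whenever $\rho_0\le r$, giving the uniform bound $\nabla^2\mathsf r_p\ge\ct_K(r)g$ over $B_p(r)$; and from the definition \eqref{eqn:cotsdef} one reads off that $\ct_K(r)>0$ precisely when $K\le0$ (and $r>0$) or $K>0$ and $r<\tfrac{\pi}{2\sqrt K}$.

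To establish the displayed inequality I would argue as follows. Fix a unit vector $v\in T_q S_p(\rho_0)$ and let $J$ be the Jacobi field along $\gamma$ with $J(0)=0$ and $J(\rho_0)=v$; since $\rho_0<\injg$ the geodesic $\gamma$ carries no conjugate points, so $J$ exists, is unique, is everywhere orthogonal to $\gamma'$, and has $J'(0)\ne 0$ (otherwise $J\equiv0$). Realising $\nabla\mathsf r_p$ as the $t$-derivative field of a geodesic variation whose variation field is $J$ yields the classical identity $\nabla_{J(\rho_0)}\nabla\mathsf r_p=J'(\rho_0)$, hence $\nabla^2\mathsf r_p(v,v)=\langle J'(\rho_0),J(\rho_0)\rangle$. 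Now put $u(t)=|J(t)|$; this is positive on $(0,\rho_0]$ with $u(0)=0$, and a short computation from the Jacobi equation $J''=-R(J,\gamma')\gamma'$, the Cauchy--Schwarz inequality, and the hypothesis $\sec(g)\le K$ shows $u''+Ku\ge 0$ on $(0,\rho_0]$. Let $s_K$ be the model solution of $s_K''+Ks_K=0$ with $s_K(0)=0$ and $s_K'(0)=1$, which is positive on $(0,\rho_0]$ in the relevant range of $K$ and $\rho_0$ and satisfies $s_K'/s_K=\ct_K$. The Wronskian $W=u's_K-us_K'$ then has $W(0)=0$ (because $s_K(0)=0$) and $W'=u''s_K+Kus_K\ge 0$, so $W\ge 0$ on $[0,\rho_0]$; dividing by $us_K>0$ gives $u'/u\ge s_K'/s_K=\ct_K$ on $(0,\rho_0]$. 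Evaluating at $t=\rho_0$ and using $\langle J'(\rho_0),J(\rho_0)\rangle=u(\rho_0)u'(\rho_0)$ together with $u(\rho_0)=|v|=1$ yields $\nabla^2\mathsf r_p(v,v)=u(\rho_0)u'(\rho_0)\ge u(\rho_0)^2\,\ct_K(\rho_0)=\ct_K(\rho_0)$, as required.

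The step I expect to demand the most care is not the curvature estimate, which is a one-line consequence of $\sec(g)\le K$, but the bookkeeping: one must consistently restrict to the tangent bundle of geodesic spheres, verify that the model solution $s_K$ does not vanish on $(0,\rho_0]$ so the Wronskian division is legitimate (this uses $\rho_0<r<\pi/\sqrt K$ when $K>0$, which is automatic in the stated ranges), and treat the boundary behaviour of $u$ at $t=0$, where $u(t)=|J'(0)|\,t+O(t^3)$. An equivalent and somewhat slicker packaging, the one used in \cite{Chavel06}, replaces the scalar $u=|J|$ by the shape operators $S(t)$ of the spheres $S_p(t)$ along $\gamma$, which satisfy the matrix Riccati equation $\nabla_tS+S^2+R(\cdot,\gamma')\gamma'=0$ with $S(t)=\tfrac1t\,\mathrm{Id}+O(t)$, and then invokes the one-sided comparison principle for symmetric matrix Riccati equations against the scalar solution $\ct_K$; since this is standard I would cite it rather than reprove it, and in any case the scalar argument above already suffices because only the single quantity $\langle J'(\rho_0),J(\rho_0)\rangle$ is needed for each $v$.
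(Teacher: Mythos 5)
Your proof is correct and provides the standard argument that the paper delegates entirely to Chavel's textbook (the proposition is stated with a bare \verb|\qed| and a pointer to \cite[Section IX.6]{Chavel06}). Your reduction — fix $q$, pass to the Jacobi field $J$ along the radial geodesic with $J(0)=0$, $J(\rho_0)=v$, use the identity $\nabla^2\mathsf{r}_p(v,v)=\langle J'(\rho_0),J(\rho_0)\rangle$, then run a Sturm/Wronskian comparison of $u=|J|$ against the model solution $\sn_K$ — is exactly the classical Hessian comparison, and all the side conditions (no conjugate points below $\injg$, perpendicularity of $J$ to $\gamma'$, positivity of $\sn_K$ on $(0,\rho_0]$, smoothness of $u$ at $t=0$, monotonicity of $\ct_K$) are handled correctly. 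You also rightly flag the one imprecision in the paper's statement: since $\nabla^2\mathsf{r}_p$ annihilates the radial direction, "positive definite" and the bound $\nabla^2\mathsf{r}_p\ge\ct_K(r)\,g$ must be read after restricting to tangent spaces of geodesic spheres, which is precisely the form in which the estimate is consumed by Lemma~\ref{charicterizeconvex} and in the proof of Lemma~\ref{lemma:nDJprime} (where it is applied to Jacobi fields $J$ with $J(0)=0$, hence everywhere orthogonal to the radial direction). The only cosmetic remark: when justifying $u''+Ku\ge 0$ you invoke Cauchy–Schwarz; it is worth noting this is applied in the form $\langle J',J\rangle^2\le |J'|^2|J|^2$ to get $|J'|^2-(u')^2\ge 0$, since that is where the one-line claim actually lives.
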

 
\no The convexity radius $\conv(g)$ of Theorem \ref{thm:main-higher},
 refers to the maximum $r>0$ such that all geodesic balls of radius less than or equal to $r$ have geodesically convex boundary as hypersurfaces.
%%%%%%%%%%%%%%%%%%%%%%%%%%%
\subsection{Pseudo-convexity in symplectizations}
%%%%%%%%%%%%%%%%%%%%%%%%%%%%%%%%%%%%%%%%%%%%%%%%%%%%%%
\label{ss:contactconvexity}

We consider the setup analogous to the one in \cite{EKM-sphere} but in dimension $\geq 3$.  
The symplectization of a contact manifold $(M, \xi)$ equipped with a
distinguished contact form $\alpha$ is 
the product $W := \R\times M$, equipped with the symplectic form
$\omega = d(e^t\alpha),$ where $t$ is the coordinate on $\R$.
Let $J$ be a complex structure on $\xi$ that is compatible with $(d\alpha)|_\xi$
and extended to $TW$ by setting $J\partial_t=R_\alpha.$

Let $U$ be a regular sublevel set of some function $f:M\to \R$ and
$S=\partial U.$ We can think of $f$ as a function on $W$ (by composing with the
projection $W\to M$) and thus we get the regular sublevel set $\Omega=\R\times
U$ with boundary $\Sigma=\R\times S.$
The complex tangencies to $\Sigma$,
\[
 \mathcal{C}_\Sigma=T\Sigma\cap J(T\Sigma),
\]
 can  be described as the kernel of the 1--form
$
df\circ J.
$
The form 
\[
L(u,v)=-d(df\circ J)(u,Jv)
\]
is called the \dfn{Levi form} of $\Sigma$.  Recall that $\Sigma$ is said to be
\dfn{pseudoconvex}, or \dfn{strictly pseudoconvex} if $L(v,v)\geq0,$
respectively $L(v,v)>0,$ for all non-zero $v\in \mathcal{C}_\Sigma$.

We also extend the
metric $g$ on $M$ to $W$ by $g+dt\otimes dt.$ 
The following result which relates Riemannian and symplectic convexity should be compared with the analogous result in
K\"ahler geometry \cite[Lemma page 646]{GreenWu} and in dimension 3, \cite{EKM-sphere}.

\begin{proposition}
\label{prop:levi-form}
Let $(\alpha, g, J)$ be a compatible metric structure on the
contact manifold $(M^{2n+1},\xi).$ Then, using the notation above, for any $v \in \mathcal{C}_\Sigma$ we have
\[
L(v,v) = \nabla^2 f(v,v)+\nabla^2 f(Jv,Jv).
\]
\end{proposition}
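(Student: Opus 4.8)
The plan is to compute the Levi form $L(v,v) = -d(df\circ J)(v,Jv)$ directly in terms of covariant derivatives and then recognize the Hessian terms. First I would write $df\circ J = df\circ\phi$ on $TW$ (extending $J$ by $J\partial_t = R_\alpha$, so on the $M$-directions $J$ agrees with $\phi$ of Equation~\eqref{definephi}, while $df$ annihilates $\partial_t$ since $f$ is pulled back from $M$). Thus $\beta := df\circ J$ is a $1$-form on $W$ that is pulled back from $M$ as well, equal to $df\circ\phi$. I would then expand $d\beta(v,Jv)$ using the standard formula $d\beta(X,Y) = X\cdot\beta(Y) - Y\cdot\beta(X) - \beta([X,Y])$, and convert the Lie bracket and directional derivatives into covariant derivatives of the torsion-free Levi-Civita connection. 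Because everything in sight is pulled back from $M$ and $v\in\mathcal C_\Sigma\subset \xi$ can be assumed tangent to $M$, the computation reduces to one on $M$ with the metric $g$.

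The key identity I expect to need is an expression for the Hessian of $f\circ\phi$-type quantities: writing $\beta = df\circ\phi$, one has $\beta(w) = \langle \nabla f, \phi(w)\rangle = -\langle \phi(\nabla f), w\rangle$ (using skew-symmetry of $\phi$, Equation~\eqref{galpha} and the metric relations in Proposition~\ref{proposition:determinemetric}). So $\beta = -\iota_{\phi(\nabla f)} g$, i.e. $\beta$ is metrically dual to $-\phi(\nabla f)$. Then $d\beta(v,Jv)$ becomes an expression involving $\nabla_v(\phi\nabla f)$ and $\nabla_{Jv}(\phi\nabla f)$, which by the product rule splits into terms with $(\nabla_v\phi)(\nabla f)$ and terms with $\phi(\nabla_v\nabla f) = \phi(\text{Hess}f\cdot v)$. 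The Hessian terms, after pairing and using $\langle \phi(\text{Hess}f\cdot v), Jv\rangle$ type manipulations together with $J^2 = -\mathrm{id}$ on $\xi$ and the symmetry of the Hessian, should assemble exactly into $\nabla^2 f(v,v) + \nabla^2 f(Jv,Jv)$; here one uses that for $v\in\mathcal C_\Sigma$ both $v$ and $Jv$ lie in $\ker(df\circ J)$, which kills the cross terms and the terms where a derivative hits $\partial_t$.

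The main obstacle will be showing that the remaining terms — those involving $(\nabla\phi)$ rather than the Hessian of $f$ — cancel. This is where Lemma~\ref{lem:phiderivative} enters: the explicit formula for $\langle(\nabla_u\phi)(v),w\rangle$ has an antisymmetric-looking piece $\langle[\phi,\phi](v,w),\phi(u)\rangle$ plus terms with factors $\alpha(v)$, $\alpha(w)$ that vanish when $v,w\in\xi$. Restricting to $v\in\mathcal C_\Sigma$ (so $v,Jv\in\xi$ and the $\alpha$-terms drop out), the surviving $\nabla\phi$ contributions should be of the form $\langle[\phi,\phi](\cdot,\cdot),\cdot\rangle$ evaluated on $v$ and $Jv$, and the sum over the two complex-tangency directions $v$ and $Jv$ should cancel by the skew-symmetry of the Nijenhuis tensor together with $[\phi,\phi](v,Jv)$ appearing with opposite signs, or more directly because $df$ annihilates the $R_\alpha$-component that the Nijenhuis terms carry (cf. Equation~\eqref{eq:N-torsion-phi}: $[\phi,\phi]$ on $\xi$ lands, up to a $CR$-integrable piece, along $R_\alpha$, which $df\circ J$ does not see in the relevant pairing). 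I would keep careful track of signs using the conventions fixed in Proposition~\ref{CRformula}, and double-check the final formula against the three-dimensional case in \cite{EKM-sphere} and the Kähler analogue in \cite{GreenWu}.
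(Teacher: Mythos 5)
Your overall strategy — expand $d(df\circ J)$ into covariant derivatives, isolate Hessian terms, and appeal to Lemma~\ref{lem:phiderivative} for the $\nabla\phi$ contributions — is related in spirit to the paper's, but there is a genuine error at the outset: you write that ``$v \in \mathcal{C}_\Sigma \subset \xi$ can be assumed tangent to $M$.'' This is false. The distribution $\mathcal{C}_\Sigma = T\Sigma \cap J(T\Sigma)$ lives in $TW = \R\partial_t \oplus \R R_\alpha \oplus \xi$, and a generic $v \in \mathcal{C}_\Sigma$ has the form $v = v^\xi + a\,R_\alpha + b\,\partial_t$ with $a,b$ nonzero (indeed, if $v = v^\xi + aR_\alpha$ then $Jv = Jv^\xi - a\partial_t$, so the $\partial_t$-component is forced). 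For the application in Theorem~\ref{thm:main-higher}, $f$ is a geodesic distance function on $M$ whose level spheres have no a priori relation to $\xi$, so there is no reason for $\mathcal{C}_\Sigma$ to sit inside $\xi$. The whole point of the paper's Lemma~\ref{lemma:common} is to carry out exactly the decomposition you skip: one obtains
\[
-L(v,v) + \nabla^2 f(v,v) + \nabla^2 f(Jv,Jv) = df\bigl(A(v^\xi) + a B(Jv^\xi) - b B(v^\xi) - (a^2+b^2)\nabla_n n\bigr),
\]
and the terms weighted by $a$, $b$, $a^2+b^2$ are not zero individually; they only disappear after Lemma~\ref{lemma:computeAB} computes $A(v) = -\theta'\|v\|^2\partial_t$ and $B(v) = -\theta' v$, the Reeb geodesic equation $\nabla_n n = 0$ (Proposition~\ref{Rderivative}) is invoked, and the resulting expression $-\theta' df(\|v\|^2\partial_t + aJv - bv)$ is killed by $df(\partial_t)=0$, $df(v)=0$, $df(Jv)=0$. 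Your proposal, as written, would at best prove the proposition for the measure-zero set of $v$ lying in $\xi$.

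A secondary issue: the claimed cancellation of the $\nabla\phi$ terms is waved at rather than established. It is true that Lemma~\ref{lem:phiderivative} and the identities $[\phi,\phi](v,v)=0$, $[\phi,\phi](v,\phi v)=-\|v\|^2 R_\alpha$ underlie the computation (this is how the paper shows $\langle A(v),v\rangle = \langle A(v),Jv\rangle = 0$), but the outcome is not that the Nijenhuis terms cancel outright — rather, $A(v)$ has a nonzero $\partial_t$-component, $-\theta'\|v\|^2\partial_t$, which survives and must be tracked. The fact that $df$ kills $\partial_t$ is what makes it harmless, not an internal cancellation. You should not appeal to ``skew-symmetry of the Nijenhuis tensor'' here: what is actually used is the pair of identities from Equation~\eqref{eq:[phi,phi](v,v)}, one of which deposits a term along $R_\alpha$ rather than cancelling. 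So to repair the proof you need both the explicit $a,b$-decomposition of Lemma~\ref{lemma:common} and the precise formulas for the tensors $A$ and $B$ from Lemma~\ref{lemma:computeAB}, not just the observation that $df\circ J$ annihilates $R_\alpha$.
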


The rest of this subsection is devoted to the proof of the preceding proposition
and can be safely skipped on first reading.
For convenience of notation we denote $R_\alpha$ by $n$ in the computations.  We define 
two bundle maps $A:\xi\to TW$ and $B:\xi\to TW$ by
\[
A(v) =J[Jv, v] - \nabla_v v - \nabla_{Jv} Jv,
\]
\no and
\[
B(v)  =J[v, n] + \nabla_{Jv} n + \nabla_n Jv.
\]
One can easily check that the values $A$ and $B$ at a point only depend on the
vector at the point and not on the local extension to
local vector fields (more precisely $B$ is a tensor and $A$ is a
quadratic bundle map). We begin with the following observation (which originally
appeared for metric contact 3-manifolds in \cite{EKM-sphere}).

\begin{lemma}
	\label{lemma:common}
Under the hypotheses of Proposition~\ref{prop:levi-form}, we have
\[
-L(v,v) +\nabla^2 f(v,v)+\nabla^2 f(Jv,Jv) = 
df\left(A(v^\xi) + a B(Jv^\xi) - b B(v^\xi) - (a^2 + b^2) \nabla_n n \right),
\]
where the vector $v\in \mathcal{C}_\Sigma$ is written as $v=v^\xi+a\, n+
b\, \partial_t,$ with $v^\xi\in \xi$.
\end{lemma}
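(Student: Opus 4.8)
The plan is to prove Lemma~\ref{lemma:common} by a direct computation that expands both the Levi form $L(v,v)$ and the Riemannian Hessian terms $\nabla^2 f(v,v)+\nabla^2 f(Jv,Jv)$ in terms of covariant derivatives, and then matches the discrepancy against the tensorial maps $A$ and $B$. First I would record the standard formula for the Hessian as a $(0,2)$-tensor, namely $\nabla^2 f(X,Y)= X\cdot(Y\cdot f) - (\nabla_X Y)\cdot f$, and the formula $df\circ J(Y) = -(Jf)(Y)$ so that $L(X,Y) = -d(df\circ J)(X,JY)$ can be rewritten using the Cartan formula for $d$ of a one-form: $d\beta(X,Z) = X\cdot\beta(Z) - Z\cdot\beta(X) - \beta([X,Z])$. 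Applying this with $\beta = df\circ J$ and $Z = JY$ will produce terms of the form $X\cdot(df(JJY))$, $(JY)\cdot(df(JX))$, and $df(J[X,JY])$; since we only evaluate on $v\in\mathcal{C}_\Sigma$ and $J^2 = -\mathrm{id}$ on $\xi$ (with the correction term $\alpha(v)R_\alpha$ from Equation~\eqref{square} when $v$ has a normal component), this will be manageable.

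The key bookkeeping step is to split $v = v^\xi + a\,n + b\,\partial_t$ and use $\R$-invariance of $v^\xi$, $a$, $b$ so that $\partial_t$-derivatives vanish, $J\partial_t = n$, and $Jn = -\partial_t$ (since $J$ extends by $J\partial_t = R_\alpha = n$ and $J^2 = -\mathrm{id}$ on the symplectization). Then $Jv = Jv^\xi + a\,n\cdot(\text{wait}) $ — more precisely $Jv = Jv^\xi + a R_\alpha$? No: $Jv = J v^\xi + a J n + b J\partial_t = Jv^\xi - a\partial_t + b\, n$. I would substitute this into $\nabla^2 f(Jv,Jv)$ and expand bilinearly, collecting the pure-$\xi$ part $\nabla^2 f(Jv^\xi, Jv^\xi)$, the cross terms with coefficients $a$ and $b$, and the pure-normal/$\partial_t$ part with coefficient $a^2+b^2$. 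The same is done for $\nabla^2 f(v,v)$. The terms $\nabla^2 f(v^\xi,v^\xi)+\nabla^2 f(Jv^\xi,Jv^\xi)$ combine with the Levi form; crucially $df$ kills $\partial_t$-directions and $R_\alpha$-derivatives of $f$ are what survive, so many terms reorganize into $df$ applied to combinations of $[Jv^\xi,v^\xi]$, $\nabla_{v^\xi}v^\xi$, $\nabla_{Jv^\xi}Jv^\xi$, $\nabla_n Jv^\xi$, $\nabla_{Jv^\xi}n$, and $\nabla_n n$ — exactly the ingredients of $A(v^\xi)$, $B(v^\xi)$, $B(Jv^\xi)$, and $\nabla_n n$. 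Matching coefficients ($1$ for $A(v^\xi)$, $a$ for $B(Jv^\xi)$, $-b$ for $B(v^\xi)$, $-(a^2+b^2)$ for $\nabla_n n$) then yields the claimed identity.

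The main obstacle I expect is the sign and projection bookkeeping around the extension of $J$ to $TW$ and the fact that $J^2 = -\mathrm{id}$ only holds after the correction from Equation~\eqref{square}: on a general vector $w\in TM$ one has $\phi^2 w = -w + \alpha(w)R_\alpha$, and on $W$ the relevant identity involves $\partial_t$ as well. Keeping straight which terms involve $\alpha(\cdot)R_\alpha$ corrections, and verifying that those corrections either cancel or get absorbed into $df(\cdots)$ consistently (using $df(R_\alpha) = R_\alpha\cdot f$ and $df(\partial_t)=0$), is the delicate part. A secondary point is checking tensoriality of $A$ and $B$ (stated as "easily checked"): the Leibniz-rule terms that would obstruct tensoriality cancel because $[fX,Y]$ and $\nabla_{fX}Y$ contribute the same $(Y\cdot f)X$-type terms that are killed by the symmetrized combination — I would verify this once at the start so the rest of the computation can be done pointwise with arbitrary convenient local extensions. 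Once the tensorial framework is fixed, the remaining work is a determined but routine application of Koszul/Cartan formulas.
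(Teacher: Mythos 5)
Your plan follows essentially the paper's own route: Hessian formula, Cartan's formula for $d(df\circ J)$ evaluated on $(v, Jv)$, decomposition $v = v^\xi + a\,n + b\,\partial_t$, and matching against $A$ and $B$. Two refinements will spare you a lot of bookkeeping. First, combine \emph{before} you decompose. Adding the Hessian identity
$\nabla^2 f(v,v)+\nabla^2 f(Jv,Jv)= v\cdot (df(v)) + (Jv)\cdot (df(Jv)) - df(\nabla_v v +\nabla_{Jv}Jv)$
to the Cartan expansion
$d (df\circ J) (v, Jv) = - v\cdot (df(v)) -(Jv)\cdot (df(Jv)) + df(J [Jv,v])$
already yields the clean intermediate identity $-L(v,v) +\nabla^2 f(v,v)+\nabla^2 f(Jv,Jv)= df\bigl(J [Jv,v]- \nabla_v v -\nabla_{Jv}Jv\bigr)$; only then should you substitute $v = v^\xi + a\,n + b\,\partial_t$ into the three arguments $J[Jv,v]$, $\nabla_v v$, $\nabla_{Jv}Jv$ and collect terms. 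If instead you expand each Hessian and the Levi form separately in the $(v^\xi, n, \partial_t)$-decomposition, the cross terms are much harder to control, and you are at risk of not seeing that everything packages into $df$ of a single expression. Second, your worry about the $\phi^2 = -\mathrm{id} + \alpha(\cdot)R_\alpha$ correction is misplaced: that formula concerns $\phi$ restricted to $TM$, whereas the $J$ appearing in the Levi form is the extension to $TW$ by $J\partial_t = R_\alpha$, which satisfies $J^2=-\mathrm{id}$ exactly on all of $TW$, with no residue. So $J^2 v = -v$ holds on the nose, and there is no sign subtlety of the kind you anticipate; the only thing you actually need is $\nabla_{\partial_t}\,\cdot\,=0$ (from $\mathbb{R}$-invariance) so that all the $b\partial_t$ contributions to $\nabla_v v$ and $\nabla_{Jv}Jv$ and the Lie brackets drop out.
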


\begin{proof}
We extend $v$ to a vector field on $W$ as $v=v^\xi+a\, n + b\, \partial_t,$
with $v^\xi\in \xi$ invariant under translation in the $\R$ direction, and $a$
and $b$ are constants.
We first compute
\begin{equation*}
\begin{aligned}
\nabla^2 f (v,v)+\nabla^2 f (Jv,Jv)&= v\cdot (df(v))-(\nabla_v v)\cdot f
+ (Jv)\cdot (df(Jv)) - (\nabla_{Jv}Jv)\cdot f\\
&=v\cdot (df(v)) + (Jv)\cdot (df(Jv)) - df(\nabla_v v +\nabla_{Jv}Jv).
\end{aligned}
\end{equation*}
And, using the formula $d\alpha(u,w) = u\cdot\alpha(w) - w\cdot\alpha(u) -
\alpha([u, w])$ we have
\begin{equation*}
d (df\circ J) (v, Jv) = - v\cdot (df(v)) -(Jv)\cdot (df(Jv)) + df(J [Jv,v]).
\end{equation*}
Adding the two preceding equations, we obtain
\[
-L(v,v) +\nabla^2 f(v,v)+\nabla^2 f(Jv,Jv)= df(J [Jv,v]- \nabla_v v -\nabla_{Jv}Jv).
\]
\no Decomposing $v$ as $v^\xi+a\, n +b\partial_t$ as in the statement of the lemma
and using $\nabla_{\partial_t} v = 0$ we compute
\begin{align*}
J[Jv, v] & = J[Jv^\xi, v^\xi] + a J[Jv^\xi, n] + b J[n,v^\xi], \\
  \nabla_v v & = \nabla_{v^\xi} v^\xi + a(\nabla_{v^\xi} n + \nabla_n v^\xi) +
a^2 \nabla_n n, \\
  \nabla_{Jv} Jv & = \nabla_{Jv^\xi} Jv^\xi + b(\nabla_{Jv^\xi} n + \nabla_n Jv^\xi)  +
b^2 \nabla_n n.
\end{align*}
Thus we see that $-L(v,v) +\nabla^2 f(v,v)+\nabla^2 f(Jv,Jv)$ equals
\begin{equation*}
df\left(A(v^\xi) + a B(Jv^\xi) - b B(v^\xi) - (a^2 + b^2) \nabla_n n\right),
\end{equation*}
giving the announced formula.
\end{proof}

Now we establish Proposition~\ref{prop:levi-form}. We
begin by computing an expression for the operators $A$ and $B$ from
Lemma~\ref{lemma:common}.

\begin{lemma}
	\label{lemma:computeAB}
Under the hypotheses of Proposition~\ref{prop:levi-form}, for any vector $v$ in
$\xi$ we have
\[
A(v) = -\theta'\|v\|^2 \, \partial_t
\quad \text{ and } \quad
B(v) = -\theta'\,v.
\]
\end{lemma}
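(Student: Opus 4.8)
The plan is to compute $A(v)$ and $B(v)$ directly using the covariant-derivative formulas established in Proposition~\ref{CRformula}, exploiting that everything here is $\R$-invariant (so $\nabla_{\partial_t}$-terms vanish) and that on $\xi$ we have $\phi=J$ together with $\phi^2(w)=-w+\alpha(w)R_\alpha$. Since $A$ and $B$ are tensorial, it suffices to verify the formulas on a convenient local frame; I would take, as in the proof of Lemma~\ref{lem:phiderivative}, an orthonormal eigenbasis $v_1,Jv_1,\dots,v_n,Jv_n$ of $h$, so that $h(v_i)=\lambda_i v_i$, $h(Jv_i)=-\lambda_i Jv_i$, and then use Equation~\eqref{eqn:nabla_vN} in the sharp form $\nabla_{v_i}R_\alpha=(\theta'/2-\lambda_i)Jv_i$, $\nabla_{Jv_i}R_\alpha=-(\theta'/2+\lambda_i)v_i$.

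For $B(v)=J[v,n]+\nabla_{Jv}n+\nabla_n Jv$, I would first rewrite $J[v,n]$ using $[v,n]=\nabla_v n-\nabla_n v=\nabla_v R_\alpha-\nabla_{R_\alpha}v$. By Proposition~\ref{Rderivative}, $\nabla_{R_\alpha}v$ stays tangent to $\xi$, so $J[v,n]=J(\nabla_v R_\alpha)-J(\nabla_{R_\alpha}v)$; by Equation~\eqref{eqn:nabla_NJ}, $J(\nabla_{R_\alpha}v)=\nabla_{R_\alpha}(Jv)=\nabla_n Jv$, which will cancel the last term of $B$. This reduces $B(v)$ to $J(\nabla_v R_\alpha)+\nabla_{Jv}R_\alpha$. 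Now plug in $\nabla_v R_\alpha=\phi((\theta'/2)v-h(v))$ from Equation~\eqref{eqn:nabla_vN} and apply $J=\phi$ once more, using $\phi^2 v=-v$ for $v\in\xi$ and the anticommutation $\phi h+h\phi=0$ from Equation~\eqref{eqn:JhhJ}: one gets $J(\nabla_v R_\alpha)=-(\theta'/2)v+h(v)$ modulo a term $h(v)$ sign bookkeeping, and $\nabla_{Jv}R_\alpha=\phi((\theta'/2)Jv-h(Jv))=-(\theta'/2)v-h(v)$ after using $\phi^2=-\mathrm{id}$ and $h(Jv)=-Jh(v)$; summing gives $B(v)=-\theta'v$ with the $h$-terms cancelling.

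For $A(v)=J[Jv,v]-\nabla_v v-\nabla_{Jv}Jv$, I would use that $A(v)$ lies essentially along $\partial_t$: pair with an arbitrary $w\in\xi$ and with $n$ to see the $\xi$- and $n$-components vanish, and pair with $\partial_t$ to extract the coefficient. Writing $\langle A(v),w\rangle$ and $\langle A(v),n\rangle$ in terms of second fundamental form data, the identities $\II(v,v)+\II(Jv,Jv)=0$ (Equation~\eqref{eqn:trII}) and symmetry of $h$ are exactly what force these components to be zero; the $\partial_t$-component is computed from $\omega=d(t\alpha)$, i.e.\ from $df\circ J$ picking up $\alpha$, and reduces to $-d\alpha(v,Jv)=-\theta'\langle v,Jv\cdot\text{(sign)}\rangle=-\theta'\|v\|^2$ via Equation~\eqref{galpha}. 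The main obstacle is the careful sign and projection bookkeeping in the $A$-computation — keeping track of which terms are tangent to $\xi$ versus normal, and correctly using $J\partial_t=R_\alpha=n$ when computing the $\partial_t$-component — but no genuinely new input beyond Proposition~\ref{CRformula} is needed. Once both formulas are in hand, substituting them into Lemma~\ref{lemma:common} with $v^\xi$ in place of $v$ yields
\[
-L(v,v)+\nabla^2 f(v,v)+\nabla^2 f(Jv,Jv)=df\!\left(-\theta'\|v^\xi\|^2\partial_t-a\theta' Jv^\xi+b\theta' v^\xi-0\right),
\]
and since $df$ annihilates $\partial_t$ and $\xi$-directions tangent to $S$ appropriately — more precisely, $v\in\mathcal C_\Sigma$ means $df(v)=df(Jv)=0$, which kills the remaining terms — we conclude $L(v,v)=\nabla^2 f(v,v)+\nabla^2 f(Jv,Jv)$, proving Proposition~\ref{prop:levi-form}.
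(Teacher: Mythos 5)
Your computation of $B(v)$ is essentially identical to the paper's: cancel $\nabla_n Jv$ against $J\nabla_n v$ via Equation~\eqref{eqn:nabla_NJ}, then feed $\nabla_v R_\alpha$ and $\nabla_{Jv}R_\alpha$ from Equation~\eqref{eqn:nabla_vN} through $\phi^2=-\mathrm{id}$ on $\xi$ and $\phi h + h\phi = 0$; the $h$-terms cancel. That part is fine. Your shortcut for the $\partial_t$-component of $A(v)$ — reducing $\g{J[Jv,v],\partial_t}=-\g{[Jv,v],n}$ to $-d\alpha(v,Jv)=-\theta'\|v\|^2$ via Equation~\eqref{galpha} — is also valid and a slight variant of the paper's route (which instead reuses the $B$-formula via $\g{v,\nabla_{Jv}n + J\nabla_v n}$).

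The gap is in your account of why the $\xi$-component of $A(v)$ vanishes. You attribute it to ``$\II(v,v)+\II(Jv,Jv)=0$ and symmetry of $h$,'' and you say that ``no genuinely new input beyond Proposition~\ref{CRformula} is needed.'' Neither claim is right. The second fundamental form identity \eqref{eqn:trII} kills only the $n$-component $\g{A(v),n}$. For $\g{A(v),w}$ with $w\in\xi$ one must first rewrite
\[
A(v)^\xi = -\,(\nabla_{\phi v}\phi)(v) - \phi\big((\nabla_v\phi)(v)\big)
\]
(modulo an $R_\alpha$-term), and then invoke Lemma~\ref{lem:phiderivative} (Equation~\eqref{eqn:nabla-phi}) to convert each covariant derivative of $\phi$ into a Nijenhuis torsion term, followed by the algebraic identities $[\phi,\phi](v,v)=0$ and $[\phi,\phi](v,\phi v)=-\|v\|^2 n$ from Equation~\eqref{eq:[phi,phi](v,v)}. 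That is genuinely separate machinery from Proposition~\ref{CRformula}, and ``symmetry of $h$'' plays no role there. Moreover, in dimension $\geq 5$ pairing with just $v$ and $Jv$ (as the paper writes out) does not span $\xi$; to kill the component in directions $w\perp v, Jv$ one needs the additional algebraic identity $[J,J](Jv,w)=-J\,[J,J](v,w)$ (valid since $[Jv,w]+[v,Jw]$ is tangent to $\xi$ by~\eqref{eqn:integrability2}), which gives $\g{[\phi,\phi](v,\phi w),\phi v}+\g{[\phi,\phi](v,w),v}=0$. Your proposal elides exactly this part of the argument, which is where the real content of the lemma lives.
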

\begin{proof}
We can use
Equations~\eqref{eqn:nabla_vN}, \eqref{eqn:JhhJ}, and~\eqref{eqn:nabla_NJ} to
compute $B(v)$
\begin{align*}
B(v)&=J[v, n] + \nabla_{Jv} n + \nabla_n Jv = J\nabla_v n - J \nabla_n v  +
\nabla_{Jv} n + \nabla_n Jv \\
&=J\nabla_v n + \nabla_{Jv} n 
= (-\theta'/2\, v + h(v)) +J(\theta'/2\, Jv - h(Jv))=-\theta'\, v.
\end{align*}
We now compute $A(v)$ by projecting it to $n$, $\partial_t$ and $\xi$. Starting
with the projection to $n$ we use Equation~\eqref{eqn:trII} to conclude that
\begin{align*}
  \g{A(v), n} &= \g{J[Jv, v],n} - \g{\nabla_{v} v, n} -
  \g{\nabla_{Jv} Jv, n} \\
  &= \g{[Jv, v],\partial_t} - \II(v,v) - \II(Jv, Jv) = 0.
\end{align*}
Continuing with the projection to $\partial_t$ we have
\begin{align*}
\begin{split}
  \g{A(v), \partial_t} &= \g{J[Jv, v], \partial_t} =-\g{[Jv, v], n}\\
  & = -\g{\nabla_{Jv} v, n} + \g{\nabla_{v} Jv, n}  = \g{v, \nabla_{Jv} n +  J\nabla_{v} n}.
\end{split}
\end{align*}
From the last line of our computations of $B(v)$ above we conclude that $
\g{A(v), \partial_t} = -\theta'\|v\|^2.$ Finally, we can compute, for
any $w$ in $\xi$
\begin{align*}
\g{A(v), w} &= \g{J[Jv, v] -\nabla_{v} v - \nabla_{Jv} Jv, w}
=\g{\phi([\phi(v), v]) -\nabla_{v} v - \nabla_{\phi(v)} \phi(v), w}\\
&= \g{\phi(\nabla_{\phi(v)} v) - \phi(\nabla_{v} \phi(v))  
-\nabla_{v} v - \nabla_{\phi(v)} \phi(v), w} \\
&= -\g{(\nabla_{\phi(v)} \phi)(v), w} + \g{(\nabla_v \phi)(\phi(v)), w}\\
&= -\half\g{[J, J](v, Jw), Jv} + \half\g{[J, J](Jv, Jw), v}
\qquad\text{by Equation~\eqref{eqn:nabla-phi}}\\
&= \half\g{J[J, J](v, Jw) + [J, J](Jv, Jw),\; v}
\end{align*}
Using the definition of $[J, J]$ and the fact that 
$J^2 = -id$, one can check that, 
for any $v$ and $w$ in $\xi$,
$J[J, J](v, Jw) + [J, J](Jv, Jw) = 0$. Hence $A(v)$ has no component in $\xi$.
\end{proof}

\begin{proof}[Proof of Proposition~\ref{prop:levi-form}]
Combining Equation~\eqref{eqn:nabla_NN} and Lemmas~\ref{lemma:common}
and~\ref{lemma:computeAB} we see that
\begin{align*}
-L(v,v) +\nabla^2 f(v,v)+\nabla^2 f(Jv,Jv) &= 
df\Big(-\theta'(\|v^\xi\|^2 \partial_t + a Jv_\xi - b v_\xi)\Big) \\
&= -\theta'df(\|v\|^2 \partial_t + a Jv - b v) =0,
\end{align*}
where the last equality follows since $v\in \mathcal{C}_\Sigma.$
\end{proof}

\subsection{Bordered Legendrian open books}
%%%%%%%%%%%%%%%%%%%%%%%%%%%
\label{SS:PS}
Let $N$ be a compact manifold with nonempty boundary.  A \dfn{relative open
book} on $N$ is a pair~$(B, \theta)$ where
\begin{itemize}
\item 
the \dfn{binding} $B$ is a nonempty codimension~$2$ submanifold in the
interior of $N$ with trivial normal bundle, and
\item 
$\theta \colon N \setminus B \to S^1$ is a fibration whose fibers are
transverse to $\partial N$, and which coincides in a neighborhood $B \times D^2$ of 
$B = B \times \{0\}$ with the normal angular coordinate.
\end{itemize}

\begin{definition}[Massot, Niederkr\"uger and Wendl 2013, \cite{MNW}]
Let $(M,\xi)$ be a $(2n+1)$-dimensional contact manifold.  A compact 
$(n + 1)$-dimensional submanifold $N\hookrightarrow M$ with boundary is called
a \dfn{bordered Legendrian open book} (abbreviated \dfn{\BLOB{}}), if it has a
relative open book $(B, \theta)$ such that
\begin{itemize}
\item [(i)] all fibers of $\theta$ are Legendrian, and 
\item [(ii)] the boundary of $N$ is Legendrian.
\end{itemize}
If such a submanifold exists then $(M, \xi)$ is called \dfn{PS-overtwisted}.
\end{definition}
We notice that the notion of a {\em plastikstufe} defined in \cite{Niederkrueger06} 
is a special case of a \BLOB where the fibers of the \BLOB are of the form $B\times [0,1]$. 
The term PS-overtwisted originally referred to the existence of a plastikstufe in a contact
manifold, but it was generalized in \cite{MNW}.  
Although it is not certain if this definition is a definitive generalization of
overtwisted to higher dimensional manifolds since it may or may not be
equivalent to the definition in \cite{BormanEliashbergMurphy}, it does have
some of the properties of 3 dimensional overtwisted contact manifolds. In
particular, we have the following result. 

\begin{theorem}[Niederkr\"uger 2006, \cite{MNW, Niederkrueger06}]
If $(M,\xi)$ is a PS-overtwisted contact manifold then it cannot be symplectically
filled by a semi-positive symplectic manifold. 
If the dimension of $M$ is less than 7 then it cannot be filled by any
symplectic manifold. \qed
\end{theorem}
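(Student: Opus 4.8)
The plan is to argue by contradiction in the style of Gromov's and Eliashberg's filling obstructions, following Niederkr\"uger's treatment of the plastikstufe and its extension by Massot--Niederkr\"uger--Wendl. Suppose $(W,\omega)$ is a semi-positive symplectic filling of $(M,\xi)$, and let $N\subset M=\partial W$ be a \BLOB with relative open book $(B,\theta)$. First I would attach a cylindrical end $[0,\infty)\times M$ to $W$ and choose an $\omega$-compatible almost complex structure $J$ that is cylindrical on the end, satisfies $J\xi=\xi$ and $J\partial_t=R_\alpha$ there for a contact form $\alpha$ whose restriction to the pages of $N$ vanishes, and is generic among such almost complex structures. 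In the model neighborhood $B\times D^2$ of the binding, in which $\theta$ is the angular coordinate, there is the classical Bishop family of small $J$-holomorphic disks with boundary on the pages $\theta^{-1}(\phi)$ emanating from $B$; this gives a non-empty moduli space to start from.

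Next I would form the moduli space $\mathcal{N}$ of unparametrized $J$-holomorphic disks $u\colon(D^2,\partial D^2)\to(W,N)$ in the homotopy class of the Bishop disks, carrying one boundary marked point, and study the connected component $\mathcal{N}_0$ containing the Bishop family. Two structural facts drive the proof. \emph{Energy control}: since every page of $N$ and the boundary $\partial N$ are Legendrian, $\alpha$ restricts to zero on them, so $\omega$ restricts to zero on $N$ and, by Stokes' theorem, the $\omega$-area of a disk in the fixed relative class is constant up to the spherical ambiguity; combined with a maximum-principle argument for the cylindrical coordinate $t$ (using that $J$ is cylindrical), this gives a uniform energy bound and confines the disks to a fixed compact region. \emph{Foliation trapping}: because $J\xi=\xi$ and the pages are Legendrian, $u(\partial D^2)$ is tangent to the characteristic foliation of $N$ and is therefore forced to lie inside a single fiber $\theta^{-1}(\phi)$; in particular the disks can never meet $\partial N$, whose fibers are ``half-open,'' so the family cannot escape through the boundary of the \BLOB. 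This trapping statement is the geometric heart of the argument and is exactly where the open-book structure of $N$ (fibers transverse to $\partial N$, agreeing with the angular coordinate near $B$) enters.

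Then I would apply Gromov compactness. With the uniform energy bound, $\overline{\mathcal{N}_0}$ is compact; semi-positivity of $W$ guarantees, for generic $J$, that $J$-holomorphic spheres of negative Chern number do not occur and that sphere bubbling happens only in strata of codimension at least two, while the fixed positive area together with the single-page boundary condition rules out non-trivial disk bubbling and breaking at the cylindrical end. After cutting $\mathcal{N}_0$ down by the location of the boundary marked point one is left with a compact one-dimensional manifold with boundary whose only boundary point corresponds to the degenerate Bishop disk collapsing onto $B$. Since a compact one-manifold has an even number of boundary points, this is a contradiction, so no semi-positive filling can exist. For the second assertion, every symplectic manifold of dimension at most $6$ is automatically semi-positive, and $\dim M<7$ forces $\dim W\le 6$, so the hypothesis is then vacuous.

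The main obstacle is the analytic package underlying the compactness step: arranging transversality for $\mathcal{N}_0$ and all of its boundary strata while retaining the constraint $J\xi=\xi$ near $M$ that the trapping lemma requires, and excluding every unwanted bubble or broken building at the cylindrical end. Semi-positivity is precisely the hypothesis that lets one achieve this with classical perturbations of $J$ rather than virtual or polyfold techniques, and in dimension less than $7$ it costs nothing. Granting this, the remaining ingredients---the Bishop family, the area computation, and the parity count---are comparatively formal.
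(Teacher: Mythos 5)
The paper does not prove this statement; it is imported from \cite{Niederkrueger06,MNW} with an immediate \textnormal{\qedsymbol}, so the comparison must be with the argument in those references rather than with anything in the present article. Your outline correctly identifies the skeleton of Niederkr\"uger's proof: a Bishop family emanating from the binding $B$, a uniform energy bound, Gromov compactness with semi-positivity controlling sphere bubbling, and the trivial observation that semi-positivity is automatic when $\dim W\le 6$.

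However, the step you single out as ``the geometric heart of the argument'' --- the claim that each disk boundary $u(\partial D^2)$ is tangent to the characteristic foliation of $N$ and therefore confined to a single page $\theta^{-1}(\phi)$ --- is not correct, and it propagates an error into your energy discussion. In the Bishop family near $B\subset N$, where in suitable coordinates $N=B\times D^2$ and $\theta$ is the angular coordinate on the $D^2$ factor, a Bishop disk has boundary $\{b_0\}\times\{r=r_0\}\subset B\times D^2$: this circle winds once around $B$ and is \emph{transverse} to the pages $\{\theta=\text{const}\}$, not contained in one. Indeed, if the boundary lay in a single Legendrian page, then $\int_{\partial D}u^*\alpha$ would vanish identically, forcing the disk to be constant and destroying the whole Bishop family. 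For the same reason the assertion that ``$\omega$ restricts to zero on $N$'' is false in general: $d\alpha|_{TN}$ is of the form $f\,d\theta$ with $f$ non-constant (vanishing on $B$ and on $\partial N$), so $d\alpha|_N = df\wedge d\theta \neq 0$, and the $\omega$-area of disks in the moduli space is genuinely variable; already the Bishop disks themselves have area growing from zero.

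The roles actually played by the Legendrian conditions are different from what you describe. The Legendrian pages guarantee that $N\setminus B$ is totally real, giving a Fredholm problem of the right index, while $B$ is the locus of complex tangencies furnishing the Bishop family. The Legendrian boundary $\partial N$ is what makes the $\alpha$-action $\int_{\partial D}u^*\alpha$ of the boundary loop go to zero as the family approaches $\partial N$, and this is the correct source of the uniform energy bound and of the ultimate contradiction (combined with the monotonicity lower bound on the energy of non-constant disks). Your ``foliation trapping'' picture needs to be replaced by this energy/totally-real picture; once that is fixed, the remainder of your outline --- including the reduction of the dimension~$<7$ case to the semi-positive case --- is sound.
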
 

A $2n$--dimensional symplectic manifold $(X,\omega)$ is called semi-positive if
every element $A\in \pi_2(X)$ with $\omega(A)>0$ and $c_1(A)\geq 3-n$ satisfies
$c_1(A)>0.$ Note that all Stein and exact symplectic manifolds are
semi-positive.

The presence of a \BLOB also has dynamical consequences and they will be crucial
in our proof of Theorem \ref{thm:main-higher}. Recall that the Weinstein
conjecture asserts that any Reeb vector field on a closed contact manifold has
some closed Reeb orbits. Contractible Reeb orbits do not always exist
but the considerations in \cite{MNW} allows to slightly generalize the main
theorem in \cite{AlbersHofer09} resulting in the following theorem.
\begin{theorem}[Albers and Hofer 2009, \cite{AlbersHofer09}]
\label{albershofer}
Let $(M,\xi)$ be a closed  PS-overtwisted contact manifold. Then every Reeb
vector field associated to $\xi$ has a contractible periodic orbit.\qed
\end{theorem}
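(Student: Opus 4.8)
The plan is to follow the strategy introduced by Hofer in dimension $3$ for overtwisted contact structures and extended to higher dimensions by Niederkr\"uger \cite{Niederkrueger06} and Albers--Hofer \cite{AlbersHofer09}; the only genuinely new input is the observation of \cite{MNW} that these arguments use only the Legendrian character of the pages and of $\partial N$ and are insensitive to the topology of the pages, so that a \BLOB{} works in place of a plastikstufe. Concretely, fix a contact form $\alpha$ for $\xi$ with Reeb field $R_\alpha$ and form the symplectization $W = \R\times M$ with $\omega = d(e^t\alpha)$ and an $\R$--invariant almost complex structure $J$ that sends $\partial_t$ to $R_\alpha$, preserves $\xi$, and is tamed by $d\alpha|_\xi$; in addition arrange $J$ to be adapted to the \BLOB{} $N\subset M$ near its binding $B$.

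First I would produce a \emph{Bishop family} of $J$--holomorphic disks in $W$ with boundary on $N$. Near the binding, the local model $B\times D^2$ together with the angular Legendrian fibration presents $N$ as a totally real submanifold along which the standard elliptic filling picture applies, so Bishop's theorem yields a smooth family $\{u_s\}$ of small holomorphic disks with $\partial u_s\subset N$ emanating from $B$, and a direct Fredholm computation shows these disks are regular. Let $\mathcal M$ denote the moduli space of such disks, in the relevant relative homotopy class and modulo reparametrization; it is a smooth $1$--manifold near the Bishop family. I would then argue that $\mathcal M$ is \emph{non-compact}: a disk cannot shrink onto $B$ except at the start of the family; it cannot meet $\partial N$, since $\partial N$ is Legendrian and a maximum principle forbids boundary tangency there; and the family cannot close up into a compact $1$--manifold, by a degree argument incompatible with the relative open book structure on $N$. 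Hence $\mathcal M$ contains a sequence $u_k$ leaving every compact set.

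Next I would apply a compactness theorem of SFT type (Bourgeois--Eliashberg--Hofer--Wysocki--Zehnder, in the version allowing Legendrian boundary). The $\omega$--energy of $u_k$ is uniformly bounded: by Stokes' theorem it is controlled by the boundary integral $\int_{\partial u_k} e^t\alpha$, and $\alpha$ pulls back to $N$ as the angular coordinate of the open book while the boundaries stay in a bounded range of the $t$--coordinate, so this integral is determined up to bounded error by the fixed homotopy class. After passing to a subsequence and rescaling, $u_k$ converges to a non-constant holomorphic building; because the $u_k$ escape $\mathcal M$, the building must contain a finite-energy plane in $W$ (or a finite-energy half-plane with boundary on $N$) with a positive puncture asymptotic to a closed Reeb orbit $\gamma$ of $R_\alpha$. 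Finally $\gamma$ is contractible in $M$: every component of the building has genus zero, any boundary arcs lie on $N$, and the components are glued along punctures and boundary arcs to the original contractible Bishop disk, so the projection of the building to $M$ exhibits $\gamma$ as the boundary of a singular disk, which, $N$ being connected, can be completed inside $M$ to a null-homotopy of $\gamma$.

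The main obstacle is the compactness step: one must ensure that the degeneration of the escaping Bishop family yields an \emph{honest} closed Reeb orbit rather than disk bubbles whose boundary lies entirely on $N$ and which carry no asymptotic orbit, and this rests on the a priori energy bound above together with a careful inventory of which building components can appear --- precisely the point where the geometry of the \BLOB{} (Legendrian pages transverse to $\partial N$, trivial normal bundle of $B$) is used and where one must verify that the plastikstufe analysis of \cite{Niederkrueger06, AlbersHofer09} carries over to arbitrary pages, as asserted in \cite{MNW}.
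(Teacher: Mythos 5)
The paper does not prove this theorem: it is stated as a citation (note the immediate \qed), relying on \cite{AlbersHofer09} for the plastikstufe case and on the observation in \cite{MNW} that the argument is insensitive to the topology of the pages so long as the pages and $\partial N$ remain Legendrian. There is therefore no ``paper's own proof'' to compare against, and your sketch is in fact supplying an argument the paper deliberately omits. That said, your outline is a faithful compression of the Hofer/Niederkr\"uger/Albers--Hofer strategy and correctly identifies the MNW observation as the only genuinely new ingredient.

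A few steps are stated more confidently than the literature actually permits, and you are right to flag compactness as the main obstacle. Concretely: the uniform control on the $t$--coordinate of the boundaries is not a separate hypothesis but a consequence of placing the boundary condition on a fixed level $\{0\}\times N$ and applying the maximum principle (which controls $t$ from above; control from below on compact subsets is a further lemma, not an assumption). The claim that disks ``cannot meet $\partial N$'' is not a bare maximum principle statement but depends on the relative open book structure of the \BLOB{} --- the pages being Legendrian and transverse to $\partial N$, and $\partial N$ itself being Legendrian --- to rule out boundary touching; this is exactly where one must verify that the plastikstufe analysis carries over, as you note. Finally, the contractibility argument at the end is slightly off: the standard reasoning is that the limiting building contains a finite-energy plane $u:\C\to W$ asymptotic to $\gamma$ at $+\infty$, whose one-point compactification projects to a singular disk in $M$ bounding $\gamma$; the appeal to ``$N$ being connected'' is not the operative point, and the phrase ``glued $\ldots$ to the original contractible Bishop disk'' conflates the limit building with members of the pre-limit family. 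None of these are fatal --- they are precisely the items \cite{AlbersHofer09} and \cite{MNW} supply --- but a self-contained proof would need to address them rather than assert them.
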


%%%%%%%%%%%%%%%%%%%%%%%%%%%%%%%%%%%%%%%%%%%%%%%%%%%%%%
\subsection{Proof of the tightness radius estimate}
\label{SS:proof-tight-radius}

We can now prove Theorem~\ref{thm:main-higher} which claims that a ball $B(x, r)$ whose
radius $r$ is below the convexity radius in a contact metric manifold cannot contain
a \BLOB.

\begin{proof}[Proof of Theorem~\ref{thm:main-higher}]
If we assume the existence of a \BLOB, then one can start a family of
holomorphic disks as in \cite{Niederkrueger06}. Because of the Levi form
computation of Proposition \ref{prop:levi-form}, the boundary of a convex ball
lifts to a pseudo-convex hypersurface in the symplectization. 
The weak maximum principle for elliptic operators then guarantees that
holomorphic curves cannot ``touch from the inside" this hypersurface. 
This allows to use the strategy of \cite{AlbersHofer09} without any modification
and prove the existence of a closed Reeb orbit $\gamma$ inside the ball $B$.

However, such an orbit would be a closed geodesic according to Proposition
\ref{Rderivative}. Those cannot exist inside $B$ because it would have to be
somewhere tangent to a sphere $S(x, r_0)$, for some $r_0$, with $\gamma$ lying
inside the ball $B(x, r_0).$  Of course $r_0$ is also below the convexity radius
so $\partial B(x, r_0)$ cannot be ``touched from the inside'' by a geodesic and
we get a contradiction.

The part of Theorem \ref{thm:main-higher} relating to curvature follows from the
above and the estimate on the Hessian of the radial function given in
Proposition~\ref{prop:riem_convexity}.
\end{proof}

\begin{proof}[Proof of Corollary~\ref{cor:ut-higher}]
Note that pull-backs of $\xi$ and the metric to any covering space are
compatible and the sectional curvature is non-positive. It is well known, by
Hadamard Theorem \cite[Theorem IV.1.3 page 192]{Chavel06}, that the universal
cover of a manifold with nonpositive curvature is $\R^{2\,n+1}$ and the space is
exhausted by geodesic balls.  Moreover, the convexity radius of the universal
cover is infinite. Thus Theorem~\ref{thm:main-higher} says that a ball of any
radius is \BLOB free. The result follows.
\end{proof}

\section{A quantitative Darboux theorem in any dimension}
\label{sec:darboux}

\no 
In this section, we establish an estimate on the Darboux radius of a contact
manifold with a compatible metric structure. We begin by introducing a
number of quantities used throughout this section that depend on the
dimension, the instantaneous rotation $\theta'$ of the contact structure and
bounds on curvature and injectivity radius. Unless said otherwise, we assume that the
sectional curvature of $g$ is between $\kappa$ and $K$:
\begin{equation}\label{eq:kappa-sec-K}
 \kappa\leq \text{\rm sec}(P)\leq K,
\end{equation}
for any 2-plane $P$ in $TM$. Further, we define
\begin{equation}\label{eq:r-interval}
	\rmax = 
	\begin{cases}
		\min\left(\injg,\frac{\pi}{2\sqrt{K}}\right) &\text{if $K > 0$}\\
		\injg &\text{if $K \leq 0$}.\\
	\end{cases}
\end{equation}

We also define the quantities
\begin{align}
	\label{eq:def-AB}
A &= \A \qquad \text{ and}\nonumber\\
B &= \B\nonumber
\end{align}
where $|\text{\rm sec}(g)|$ is the maximum in absolute value sectional curvature
over $(M,g)$, and $K(u, v)$ denotes the sectional curvature of the plane
spanned by $u$ and $v$. The square root appearing in $B$ is well defined thanks
to Proposition~\ref{3Dcurvature}.  (Note than $A$ and $B$ have nothing to do
with the operators appearing in the Levi form  computation of the preceding
section.)

In addition to the reference function $\ct_k$ defined by
Equation~\eqref{eqn:cotsdef}, we will need functions $\sn_k$, also indexed
by a real number $k$
\begin{equation}
 \begin{split}
  \sn_k(r) & =\begin{cases}
		\frac{1}{\sqrt{k}}\sin(\sqrt{k} r)\, , & \qquad \text{if $k > 0$}\\
   r, & \qquad \text{if $k = 0$},\\
	 \frac{1}{\sqrt{-k}}\sinh(\sqrt{-k} r), &\qquad \text{if $k < 0$.}
  \end{cases}\\
 \end{split}
\end{equation} 

These functions combine with $A$ and $B$ and an upper bound $K$ on the
sectionnal curvature to define
\[
Q(r) := \sn_K^{-1}\left((1 - B r-\frac{1}{2} A r^2)\sn_K(r)\right).
\]
and the constants 
\begin{align}
 \overline{H}_1 &=\begin{cases}
	 \sqrt{2},&  \text{if}\ \kappa\geq 0,\\
   \sqrt{1 + \left(\frac{\sn_\kappa(\rmax)}{\rmax}\right)^2} & \text{if}\  \kappa<0,
\end{cases},\nonumber\\ 
\overline{H}_2&=\frac{8}{3}|\sec(g)|\overline{H}_1 \rmax,\,\text{ and }\label{eq:H1-H2-bar-consts}\\ 
\overline{H}&= 2\overline{H}_1\overline{H}_2+(\|[J, J]\|/2 +2B)\overline{H}_1^2.\nonumber
\end{align}

Combining all those numbers, and using the fact that $B$ is positive, we define
\begin{equation}\label{eq:rtamed}
	\rtamed = \min\left(\frac{\injg}{2}, \frac{\pi}{2\sqrt{K}}, \frac{2}{\sqrt{2\,A+B^2}+B}, 
	\frac 1{(1+2n(n-1))\overline{H}}\right) 
\end{equation}
We are now ready to state the refined version of
Theorem~\ref{thm:GeometricDarbouxHigherRough}, with a refined estimate, which
will be proven in this section. 
\begin{theorem}
	\label{thm:GeometricDarbouxHigher}
 Given a compatible metric structure $(\alpha, g, J)$  on  $(M,\xi)$, with sectional curvature bounded as in \eqref{eq:kappa-sec-K}, the Darboux radius admits the following  bound
\begin{equation}\label{eq:delta}
 \delta(M,\xi)  \ge Q(\rtamed).
\end{equation}
\end{theorem}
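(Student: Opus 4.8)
The plan is to produce, for each $p\in M$ and each $r<\rtamed$, an explicit contact embedding of a neighborhood of $p$ into $(\R^\hdim,\xistd)$, and to read off from it that the geodesic ball $B_p(\rho)$ is a Darboux ball for every $\rho<Q(r)$; taking $\rho\uparrow Q(r)$, then $r\uparrow\rtamed$, and finally the infimum over $p$ then yields $\delta(M,\xi)\geq Q(\rtamed)$, the strict inequality coming from the fact that the construction applies on the open interval $r<\rtamed$ with every estimate strict there.

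The first step is adapted normal coordinates and a quantitative comparison of $\alpha$ with the standard form. Fix $p$; since $r<\rmax\leq\tfrac12\injg$, the exponential map identifies $B_p(\rmax)$ with a Euclidean ball $B(0,\rmax)\subset T_pM$, under which $B_p(\rho)$ is the round ball $B(0,\rho)$. Using the compatibility of $(\alpha,g,J)$ I would choose a linear isometry $T_pM\cong\R^\hdim$ sending $R_\alpha|_p$ to the vertical direction, $\xi_p$ to the standard contact hyperplane at the origin, and a $J_p$-unitary frame of $\xi_p$ to the standard one, so that the pullback contact form $\alpha_0$ agrees with $\alpha_{\text{std}}$ to first order at $0$. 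The real content is an explicit bound on $\|\alpha_0-\alpha_{\text{std}}\|_{C^1(B(0,r))}$: its first-order part is governed by the twisting of $R_\alpha$ and $\phi$, controlled via Proposition~\ref{3Dcurvature} by $\|\nabla R_\alpha\|\leq B\|u\|$ and $\|\nabla\phi\|\leq 2B\|u\|$, while its quadratic part comes from the curvature of $g$ entering through Jacobi-field comparison, contributing $A=\tfrac43(2n-1)|\sec(g)|$ and the reference functions $\sn_K,\sn_\kappa$. Assembling these bounds dimension by dimension, first in a Reeb-flow ``cylinder'' chart and then transported to the round chart, is what produces the constants $\overline{H}_1,\overline{H}_2,\overline{H}$ and the dimensional factor $1+2n(n-1)$; these are exactly the estimates of Subsections~\ref{S:t_p-est}--\ref{sec:cyl2ball}.

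The second step is a quantitative Gray--Moser deformation. Set $\alpha_s=(1-s)\alpha_{\text{std}}+s\,\alpha_0$ on $B(0,r)$. As soon as $r<\frac{2}{\sqrt{2A+B^2}+B}$, the positive root of $\tfrac12Ar^2+Br=1$, the $C^1$ bound above keeps every $\alpha_s$ contact, so one solves the Moser equation for the unique section $X_s$ of $\ker\alpha_s$ with $\iota_{X_s}d\alpha_s=\dot\alpha_s(R_{\alpha_s})\,\alpha_s-\dot\alpha_s$, where $\dot\alpha_s=\alpha_0-\alpha_{\text{std}}$, and estimates $X_s$ together with its first derivative linearly in the $C^1$-error. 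The further constraints $r<\tfrac12\injg$ and $r<\frac1{(1+2n(n-1))\overline{H}}$ guarantee that the time-$1$ flow $\psi$ of $X_\bullet$ is defined on $B(0,r)$, stays inside $B(0,\rmax)$, is a diffeomorphism onto its image, and is $C^1$-close to the identity; since $\psi^*\alpha_0$ has constant kernel $\ker\alpha_{\text{std}}$, the map $\exp_p\circ\psi$ is a contactomorphism from $(B(0,r),\xi_{\text{std}})$ onto a neighborhood of $p$, so $\Phi:=(\exp_p\circ\psi)^{-1}$ is a contact embedding of that neighborhood into $(\R^\hdim,\xistd)$ which is $C^1$-close to the normal-coordinate chart.

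The last step turns this embedding into a Darboux ball. Here $\Phi(B_p(\rho))=\psi^{-1}(B(0,\rho))$ is a $C^1$-small perturbation of the round ball $B(0,\rho)$, with perturbation size the distortion of the first step evaluated at radius $\rho$. For $\rho<Q(r)$ this perturbation is small enough that the standard contact dilation vector field of $\xistd$ still points strictly outward along $\partial\Phi(B_p(\rho))$, so $\Phi(B_p(\rho))$ is star-shaped with respect to the dilation flow, hence contactomorphic to $(\R^\hdim,\xistd)$ and therefore to a round standard ball; thus $B_p(\rho)$ is a Darboux ball and $\delta_p(\xi,g)\geq\rho$. Taking suprema and then the infimum over $p$ gives the theorem, and the form $Q(r)=\sn_K^{-1}\big((1-Br-\tfrac12Ar^2)\sn_K(r)\big)$ is precisely the radius at which the ``dilation points outward'' condition degenerates: $1-Br-\tfrac12Ar^2$ is the fractional loss from the distortion and $\sn_K$ converts between the geodesic and Euclidean radial scales. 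The main obstacle will be making the second and third steps genuinely quantitative: normalizing $X_s$ so its $C^1$-size is linear in the $C^1$-error of $\alpha_0$, checking that the explicit smallness $r<\rtamed$ (through $1+2n(n-1)$ and $\overline{H}$) really suffices for the Moser flow to exist, remain an embedding, and be $C^1$-close to the identity, and bookkeeping the composed maps $\psi^{-1}\circ\exp_p^{-1}$ and the linear identification $T_pM\cong\R^\hdim$ closely enough to see that a round standard ball of radius approaching $Q(\rtamed)$ survives; the comparison-geometry input of the first step is routine once the adapted frame is fixed.
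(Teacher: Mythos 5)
Your plan and the paper's proof diverge at the very first step, and the divergence is not cosmetic. The paper never uses the full exponential chart at $p$. It builds the geodesic disk $\D(r) = \exp_p(D_\xi(r))$ tangent to $\xi_p$ and the $R_\alpha$-invariant cylinder $\C(r)$ swept out by its Reeb flow, and because $\alpha$ is $R_\alpha$-invariant the pullback $E^*\alpha$ on $\R\times D_\xi(r)$ is \emph{exactly} $dt+\beta$ with $\beta=(\exp^\xi_p)^*(\alpha|_{\D(r)})$ --- no Moser correction in $2n+1$ dimensions is needed to reach the contactization normal form. The normalization that remains is a $2n$-dimensional interpolation of Liouville forms (Proposition~\ref{prop:embeddings}), where the cut-off $\widehat\beta$ and the target $\lambda\beta_0$ agree outside a compact set, so the Moser vector field on the non-compact completion is complete. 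This is exactly where your version breaks: your Moser field $X_s$ on $B(0,r)\subset\R^{2n+1}$ has no reason to vanish or point inward along $\partial B(0,r)$, so nothing forces its time-$1$ flow to exist, to be injective, or to stay in $B(0,\rmax)$. The constraints $r<\tfrac12\injg$ and $r<1/((1+2n(n-1))\overline{H})$ do not by themselves give domain control; you flag this yourself as the ``main obstacle'' but leave it open, and it is precisely what the paper's contactization framework is designed to circumvent.

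Your reading of $Q(r)$ is also off. In normal coordinates the radial coordinate already \emph{is} geodesic distance, so there is no $\sn_K$ ``conversion between the geodesic and Euclidean radial scales,'' and $Q(r)$ is not a degeneracy threshold for the $\xistd$-dilation field pointing out of $\Phi(B_p(\rho))$. In the paper, $Q(r)=\sn_K^{-1}\!\bigl((1-Br-\tfrac12Ar^2)\sn_K(r)\bigr)$ is produced by Proposition~\ref{prop:cyl2balls}: Toponogov's theorem and the law of sines applied to a geodesic triangle built from a radial geodesic in $\D(r)$ and a piece of a Reeb orbit show that, given the angle bound $\g{R_\alpha,\nD}\geq 1-Br-\tfrac12Ar^2$ from Proposition~\ref{prop:t-estimate}, the geodesic ball $B_p(Q(r))$ sits inside the Reeb cylinder $\C(r)$. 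So the three thresholds have distinct geometric meanings in the paper --- $\rtransverse$ controls transversality of $R_\alpha$ to $\D(r)$, $\rtamed$ controls taming of $d\alpha|_{\D(r)}$ by $(\exp^\xi_p)_*J_p$ (Proposition~\ref{prop:Jtamed}), and $Q(r)$ controls inclusion of the geodesic ball in the cylinder --- none of which is actually derived in your plan. To make your route work you would still need to prove surrogates for all three propositions, plus solve the Moser domain problem; you would not be saving any work, and the star-shapedness step would have to be replaced by an argument you haven't sketched.
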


First we explain how the coarser version announced in the introduction follows
from the above result.
\begin{proof}[Proof of Theorem~\ref{thm:GeometricDarbouxHigherRough}]
We now discuss how this bound simplifies if we are willing to assume that the
sectional curvature of $g$ is in $[-K,K]$ for some positive constant $K$
and we roughly estimate the complicated functions appearing above.
We set 
\[
\bound = \max(\theta', \sqrt K, \|[J, J]\|).
\]
In particular, 	$A \leq 4\sqrt{2n - 1}\bound^2/3 \leq 2\sqrt{n}\bound^2$. 
Using that $\sqrt{(1 + x^2)} \leq 1 + x$ for
non-negative $x$ we estimate 
\begin{align*}
	B &\leq \frac{\theta'}2\left(1 + \sqrt{ 1 + \frac{4K}{\theta'^2}}\right) 
	\leq \frac{\theta'}2\left(1 + 1 + \frac{2\sqrt K}{\theta'}\right)\\
	&\leq 2\bound.
\end{align*}
And we compute
\begin{align*}
\overline{H}_1 &= 
\sqrt{1 + \left(\frac{\sn_{-K}(\rmax)}{\rmax}\right)^2} 
= \sqrt{1 + \left(\frac{\sinh(\sqrt{K}\rmax)}{\sqrt{K}\rmax}\right)^2} \\
&\leq 
\sqrt{1 + \left(\frac{2}{\pi}\sinh\left(\frac{\pi}{2}\right)\right)^2} < 2
\end{align*}
because $\sqrt{\M}\rmax$ is always less than $\pi/2$ (there are two cases to
check depending on what term attains the min in the definition of $\rmax$).
So
$\overline{H}_1 \leq 2$ and $\overline{H}_2 \leq \frac{8\pi\bound}{3}$ and
$\overline{H} \leq 52\bound$.

On the other hand, our estimates on $A$ and $B$ give:
\[
	\sqrt{2A + B^2} + B \leq 6n^{1/4}\bound.
\]

So, the terms appearing in the definition of $\rtamed$ are estimated as follows
\[
\frac{2}{\sqrt{2A + B^2} + B} \geq \frac1{3n^{1/4}\bound} \qquad
\text{and} \qquad
\frac 1{(1+2n(n-1))\overline{H}} \geq 
\frac{1}{104n^2\bound}.
\]
Note that the second bound above is always less than the first one so,
setting 
$d_n := 1/(104n^2)$, we see
\[
\rtamed \geq \min\left(\injg, 
\frac{d_n}{\bound}\right).
\]
It remains to estimate $Q(r_0)$ for $r_0 \leq d_n/\bound$. 
Our estimates on $A$ and $B$ give
\[
1 - Br_0 - \frac 12 A r_0^2 \geq 
1 - 2d_n -
d_n^2\sqrt{n} \geq \frac{97}{100}.
\]
Therefore
\[
Q(r_0) \geq \frac1{\sqrt K} \arcsin\left(\frac{97}{100}\sin(\sqrt K r_0)\right)
\geq \frac1{\sqrt K} \frac 12\sqrt K r_0 = \frac{r_0}{2},
\]
which yields the promised Darboux radius estimate.
\end{proof}

We will now prove Theorem~\ref{thm:GeometricDarbouxHigher} modulo a number of
propositions which will be proved in subsequent subsections.  The goal is to
embed a large geodesic ball in our contact manifold into the standard contact
$\R^{2n + 1}$. The later is the contactization of the standard Liouville
structure on $\R^{2n}$ and we will compare it to some contactization of a natural
exact symplectic manifold inside our given contact metric manifold $M$. Recall a
\dfn{Liouville manifold} is a pair $(W,\lambda)$ where $d\lambda$ is a
symplectic form on $W$ and $\lambda$ restricted to the boundary of $W$ is a
contact form for a positive contact structure. Also recall that the
\dfn{contactization} of an exact symplectic manifold $(W, \beta)$, and in
particular a Liouville manifold, is $\R \times W$ equipped with the contact
structure $\ker(dt + \beta)$.

Given any point $p$ in $M$ and the contact hyperplane $\xi_p$ at $p$, the
geodesic disk $\D(r)$ centered at $p$ of radius $r$ and tangent to $\xi_p$ is
given as the image of the restriction of the exponential map to the disk of radius $r$ in $\xi_p$, that is
\[
\D(r) = \exp_p(D_\xi(r)).
\]
where $D_\xi(r)=\Bigl(\{ v \in \xi_p;\; |v| < r\}\Bigr).$
Denoting the Reeb flow by $\Phi(t,\mathbf{x}):\R\times M\longrightarrow M$ we define the map
\[
E:\R\times D_\xi(r)\to M: (t,v)\mapsto \Phi(t, \exp_p(v)),
\]
and the $R_\alpha$--invariant ``cylindrical'' neighborhood $\C(r)$ of $\D(r)$ 
to be the image of $E$. Of course $\C(r)$ is not, in general, an embedded submanifold 
of $M$, but for $r$ small enough $\D(r)$ will be an embedded disk and $R_\alpha$
will be transverse to $\D(r)$. For such an $r$, $\C(r)$ will  then contain
embedded neighborhoods of $\D(r)$, for example $E((-\epsilon, \epsilon)\times
D_\xi(r))$, for sufficiently small $\epsilon$. To prove
Theorem~\ref{thm:GeometricDarbouxHigher} we will proceed in the following steps.

\no {\bf Step I.} Find an estimate on the radius $r$ so that $R_\alpha$ is transverse to $\D(r)$. \\
{\bf Step II.} Find an estimate on the radius $r$ so that the pull back of the contact structure $\xi$ to $\R\times D_\xi(r)$ via $E$ embeds into the standard contact $\R^{2n+1}$. \\
{\bf Step III.} Find an estimate on the size of a geodesic ball about $p$ that embeds in $M$ and is contained in $\C(r)$.  

We will first list several propositions, that will be proven in the following subsections, that give the estimates indicated in the outline above and then assemble them into a proof of Theorem~\ref{thm:GeometricDarbouxHigher}. The estimate in Step I is given in the following proposition.

\begin{proposition}[proved in Section~\ref{S:t_p-est}]
	\label{prop:t-estimate}
	 Given a compatible metric structure $(\alpha, g, J)$  on the contact
manifold $(M,\xi)$, the disk $\D(r_0)$ is embedded and 
the Reeb vector field $R_\alpha$ is positively transverse to it if
\begin{equation}\label{eqn:main-bound-t}
	r_0 < \rtransverse := \min\left\{\injg, \frac{\pi}{2\sqrt{K}},\frac{2}{\sqrt{2\,A+B^2}+B}\right\},
\end{equation}
where the constants are defined at the beginning of this section. 
Moreover, if $\nD$ is a unit normal vector to $\D(r_0)$, then along any radial geodesic $\gamma=\gamma(r)$
\begin{equation}
\label{eq:gRnD}
\g{R_\alpha(r), \nD(r)} \geq 1 - Br -\frac 12 Ar^2,
\end{equation}
\end{proposition}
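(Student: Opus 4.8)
The plan is to control how much the Reeb vector field $R_\alpha$ can tilt away from the normal of the geodesic disk $\D(r)$ as one travels out along a radial geodesic, and to show this tilt stays below $\pi/2$ as long as $r$ is below the stated bound. Fix $p\in M$ and a unit vector $w\in\xi_p$, and let $\gamma(r)=\exp_p(rw)$ be the unit-speed radial geodesic in the disk directions; note $\dot\gamma(r)$ is always tangent to $\D(r)$. Let $\nD(r)$ denote the unit normal to $\D(r)$ along $\gamma$ obtained by parallel transport along $\gamma$ of $R_\alpha(p)$ (which equals the normal at $r=0$ since $R_\alpha$ is orthogonal to $\xi_p$). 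The quantity I want to estimate from below is $u(r):=\g{R_\alpha(\gamma(r)),\nD(r)}$. At $r=0$ we have $u(0)=1$, and I will show $u$ stays positive, which is exactly positive transversality of $R_\alpha$ to $\D(r)$ (since $\nD$ and $R_\alpha$ agree at the center, "positive" is well-defined by continuity).

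First I would differentiate $u$ along $\gamma$. Since $\nD$ is parallel along $\gamma$, $u'(r)=\g{\nabla_{\dot\gamma}R_\alpha,\nD}$, and then $u''(r)=\g{\nabla_{\dot\gamma}\nabla_{\dot\gamma}R_\alpha,\nD}$. The key inputs are the covariant-derivative formula for $R_\alpha$ from Proposition~\ref{CRformula}, namely $\nabla_v R_\alpha=\phi((\theta'/2)v-h(v))$, together with the pointwise bound $\|\nabla_v R_\alpha\|\le B\|v\|$ from Proposition~\ref{3Dcurvature} (Equation~\eqref{eq:nabla-reeb}) and $\|\nabla_v\phi\|\le 2B\|v\|$ (Equation~\eqref{eq:nabla-phi-norm}). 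To handle the second derivative I also need to differentiate the formula for $\nabla_v R_\alpha$ once more; this brings in $\nabla\phi$, $\nabla h$ and — through a Jacobi-field / curvature term — the sectional curvature, which accounts for the appearance of $A=\tfrac43(2n-1)|\sec(g)|$. Concretely, $\nabla_{\dot\gamma}\nabla_{\dot\gamma}R_\alpha$ can be rewritten using the definition of the Riemann tensor (trading $\nabla_{\dot\gamma}\nabla_{R_\alpha}$-type terms, via commutators, for curvature terms plus lower-order terms controlled by $B$); estimating each piece gives a differential inequality of the form $u''(r)\ge -A\,u(r)-\text{(first-order }B\text{ term)}$, or more precisely something yielding $u(r)\ge 1-Br-\tfrac12 Ar^2$ by a comparison/Gronwall argument. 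That comparison is exactly Equation~\eqref{eq:gRnD}, and it is the technical heart of the argument.

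Once Equation~\eqref{eq:gRnD} is in hand, positive transversality on $\D(r)$ follows as soon as the right-hand side $1-Br-\tfrac12 Ar^2$ is positive for all radii up to $r$, i.e. as long as $r$ is smaller than the positive root of $1-Br-\tfrac12 Ar^2=0$, which is $r=2/(\sqrt{2A+B^2}+B)$; intersecting with $\tfrac12\injg$ (needed so that $\exp_p$ is an embedding on the relevant disk and $\D(r)$ is genuinely an embedded disk with a well-defined normal) gives exactly $\rtransverse$ as defined in Equation~\eqref{eqn:main-bound-t}.

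The main obstacle I expect is the careful bookkeeping in the second-derivative estimate: organizing $\nabla_{\dot\gamma}\nabla_{\dot\gamma}R_\alpha$ so that the only curvature contribution is a clean multiple of $|\sec(g)|$ (yielding the constant $\tfrac43(2n-1)$), while all the remaining terms are genuinely first-order in $r$ and absorbed into the $B$-coefficient. One must be attentive that $h$, $\phi$ and their derivatives are only controlled in norm (not sign), so the estimate has to be done with the triangle inequality throughout and then fed into a scalar comparison ODE; getting the constants to come out as stated — rather than merely "some explicit constant" — is where the real work lies. Everything else (the definition of $\nD$, the reduction to a one-variable inequality along radial geodesics, and the final algebra solving the quadratic) is routine.
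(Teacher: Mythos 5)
There is a genuine gap, and it starts at the definition of $\nD$. You take $\nD(r)$ to be the parallel transport of $R_\alpha(p)$ along $\gamma$ and assert that this agrees with the unit normal to $\D(r)$. That is false in general: the geodesic disk $\D(r)=\exp_p(D_\xi(r))$ is not totally geodesic, so its unit normal is not parallel along the radial geodesics. (Concretely, if $J$ is a Jacobi field along $\gamma$ tangent to $\D$ with $J(0)=0$ and $J'(0)\in\xi_p$, then $\g{\nD,J}\equiv 0$ forces $\g{\nabla_{\dot\gamma}\nD,J}=-\g{\nD,J'}$, and $\g{\nD,J'}$ is generically nonzero in curved space — it is bounded only by $\tfrac43|\sec(g)|\,s$, as the paper shows in Lemma~\ref{lemma:nDJprime}.) If you insist on defining $\nD$ by parallel transport, then $\g{R_\alpha,\nD}>0$ no longer means transversality to $\D(r)$; if you take $\nD$ to be the true normal, then $u'(r)=\g{\nabla_{\dot\gamma}R_\alpha,\nD}+\g{R_\alpha,\nabla_{\dot\gamma}\nD}$ and you have dropped the second term. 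That second term is precisely where the quadratic $\tfrac12 A r^2$ comes from, so your computation cannot reproduce the claimed bound.

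Beyond this, your overall strategy also diverges from what is needed. The paper's argument is a single first-order integration: one writes $|\g{R_\alpha,\nD}_{\gamma(r)}-1|\le\int_0^r\bigl(|\g{\nabla_{\dot\gamma}R_\alpha,\nD}|+|\g{R_\alpha,\nabla_{\dot\gamma}\nD}|\bigr)\,ds$, bounds the first integrand by $B$ using Equation~\eqref{eq:nabla-reeb}, and bounds the second by decomposing $R_\alpha$ in an orthonormal frame $\{\dot\gamma,\nD,J_1,\dots,J_{2n-1}\}$ of Jacobi fields tangent to $\D$ at $\gamma(s)$; the estimate $\g{\nD,J_i'(s)}\le\tfrac43|\sec(g)|\,s$ then produces the linear-in-$s$ integrand, and the factor $2n-1$ comes from the number of Jacobi fields, not from a curvature bound. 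Integrating once gives $1-Br-\tfrac12Ar^2$ directly. Your plan to form a second-order differential inequality for $u''$ and invoke a comparison/Gr\"onwall argument is not only more complicated than necessary, it is also left unexecuted: you explicitly defer the "careful bookkeeping," and that bookkeeping is where the constant $A=\tfrac43(2n-1)|\sec(g)|$ must emerge. As written, the proof is missing its technical core, and the incorrect parallelism assumption makes the framework it is supposed to fit into wrong as well. The final algebra solving $1-Br-\tfrac12Ar^2>0$ and the role of $\tfrac12\injg$ are fine.
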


To carry out Step II we first make an observation about contactizations of
Liouville domains and exact symplectic manifolds.  For the remainder of this
section  $(W, \beta_0)$ will be a Liouville domain.  Let $\mu$ denote the
restriction of $\beta_0$ to $\partial W$. By definition $\mu$ is a contact form.
The completion of $W$ is obtained as usual by adding the cylindrical end $[1,
\infty) \times \partial W$ equipped with the Liouville form $t\mu$, where $t$ is
the ``radial'' coordinate on $[1,\infty)$. The resulting manifold will be
denoted $W_\infty$ and we will also denote this extended 1-form by $\beta_0$.
For any constant $a>1$ we set $W_a = W \cup \big([1, a) \times \partial W\big)$.
We say an almost complex structure is \dfn{adapted} to $\beta_0$ if
\begin{itemize}
\item[$(a)$] 
it is tamed by $d\beta_0$,
\item[$(b)$]
it preserves the contact structure $\ker \alpha$ on each 
$\{t\} \times \partial W$, and
\item[$(c)$]
it sends $\partial_t$, point-wise, to some positive multiple of the Reeb field $R_\mu$.
\end{itemize}

Recall that a 2--form $\omega$ tames an almost complex structure $J$ if
$\omega(u, Ju) > 0$ for any non-zero vector $u$. Note that $\omega$ is
then automatically non-degenerate since any $u$ in the kernel of $\omega$ 
would violate the taming condition. 

\begin{proposition}[proved in Section~\ref{sec:contactizations}]
	\label{prop:embeddings}
Suppose $\beta_1$ is a 1--form on $W_T$ (for some $T>1$) such that
$d\beta_1$ is a symplectic form on $W_T$ and 
there is an almost complex structure which is adapted to $\beta_0$
and tamed by $d\beta_1$. Then, for any $T_0 \in [1, T)$,
the contactization of $(\overline{W}_{T_0}, \beta_1)$ embeds in the
contactization of $(W_\infty, \beta_0)$.
\end{proposition}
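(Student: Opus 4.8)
The plan is to reduce the assertion to a purely symplectic embedding problem and then settle the latter by a now-standard Moser deformation. First I would observe that it suffices to produce a smooth embedding $\phi\colon\overline{W}_{T_0}\to W_\infty$ together with a function $h\colon\overline{W}_{T_0}\to\R$ with $\phi^*\beta_0=\beta_1+dh$. Indeed, given such a pair, the map
\[
\Psi\colon\R\times\overline{W}_{T_0}\longrightarrow\R\times W_\infty,\qquad
\Psi(t,x)=\bigl(t-h(x),\,\phi(x)\bigr)
\]
is visibly an embedding (it is a graph in the $\R$--direction over the embedding $\mathrm{id}\times\phi$), and $\Psi^*(dt+\beta_0)=dt-dh+\phi^*\beta_0=dt+\beta_1$, so $\Psi$ is a contact embedding of the contactization of $(\overline{W}_{T_0},\beta_1)$ into the contactization of $(W_\infty,\beta_0)$, which is exactly the claim.

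To build $(\phi,h)$ I would use the almost complex structure. Let $J$ be adapted to $\beta_0$ and tamed by $d\beta_1$, and set $\beta_s=(1-s)\beta_0+s\beta_1$ on $W_T$ for $s\in[0,1]$. Since taming is preserved under convex combinations, $d\beta_s(u,Ju)=(1-s)\,d\beta_0(u,Ju)+s\,d\beta_1(u,Ju)>0$ for every $u\neq0$, and as $d\beta_s$ is closed it is a symplectic form for all $s\in[0,1]$; this is the only way the taming hypothesis on $d\beta_1$ enters. Now I would run Moser's argument relative to the compact set $\overline{W}_{T_0}$: fix $T'\in(T_0,T)$ and a cutoff $\chi$ on $W_\infty$ that equals $1$ near $\overline{W}_{T_0}$ and has compact support in $W_{T'}$, and let $\phi_s$ be the flow (with $\phi_0=\mathrm{id}$) of the time--dependent vector field $X_s$ determined on $W_\infty$ by $\iota_{X_s}d\beta_0=(\chi\circ\phi_s^{-1})\,(\phi_s^{-1})^*(\beta_1-\beta_0)$, extended by zero. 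Cartan's formula then gives, on $\overline{W}_{T_0}$,
\[
\frac{d}{ds}\,\phi_s^*\beta_0=d\bigl(\phi_s^*(\beta_0(X_s))\bigr)+(\beta_1-\beta_0),
\]
so integrating in $s$ yields $\phi_1^*\beta_0=\beta_1+dh$ with $h=\int_0^1\phi_s^*(\beta_0(X_s))\,ds$ restricted to $\overline{W}_{T_0}$, and $(\phi,h)=(\phi_1,h)$ feeds the first step.

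The hard part will be verifying that the flow $\phi_s$ is actually defined for all $s\in[0,1]$, i.e.\ that $X_s$ does not drag $\mathrm{supp}\,\chi$ off to infinity along the cylindrical end of $W_\infty$. This is where I expect the remaining content of ``adapted''—conditions $(b)$ and $(c)$ pinning down $J$ on the collars $\{t\}\times\partial W$—together with the strict inequality $T_0<T$ to be essential: they constrain the radial behavior of $\beta_1$, hence of $X_s$, on $[T_0,T)\times\partial W$ tightly enough that, after shrinking $T'$ towards $T_0$ if necessary, each $X_s$ is complete with support in the fixed compact set $\overline{W}_{T'}$, so a standard continuation argument makes $\phi_1$ well defined. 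Everything else—tensoriality/smoothness checks, the contactization bookkeeping of the first step, and the fact that $\phi_1$ is an embedding of a compact manifold with boundary—is formal.
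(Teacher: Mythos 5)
Your first step is a good one and is essentially the right reformulation: reducing the contact embedding of contactizations to producing $\phi\colon\overline{W}_{T_0}\to W_\infty$ and $h$ with $\phi^*\beta_0=\beta_1+dh$, and then lifting via $\Psi(t,x)=(t-h(x),\phi(x))$, is correct and is implicitly what the paper does as well. Your observation that the taming hypothesis gives symplecticity of the convex combination $d\beta_s=(1-s)d\beta_0+s\,d\beta_1$ is also exactly the point where taming is used in the paper.

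However, the gap you flag at the end is the real content of the proposition, and your proposed way around it does not work. The hypotheses constrain $d\beta_1$ (it must tame $J$) but place \emph{no} constraint on the primitive $\beta_1$ itself on the collar $[T_0,T)\times\partial W$; in particular $\beta_1-\beta_0$ can be large there, and the Moser vector field $Y_s$ (defined by $\iota_{Y_s}d\beta_s=-(\beta_1-\beta_0)$) can have a large outward radial component. Conditions $(b)$ and $(c)$ of adaptedness tell you about $J$ on the collar, not about $\beta_1$, so they do not ``constrain the radial behavior of $\beta_1$, hence of $X_s$'' as you hope, and there is no a priori bound that lets you choose $T'$ and $\chi$ so that the cutoff flow keeps $\overline{W}_{T_0}$ inside the region where $\chi\equiv1$ for all $s\in[0,1]$. (If the flow exits that region, the Moser identity $\frac{d}{ds}\phi_s^*\beta_0=(\beta_1-\beta_0)+d(\,\cdot\,)$ fails on $\overline{W}_{T_0}$ and the conclusion $\phi_1^*\beta_0=\beta_1+dh$ is lost.) The paper's Interpolation Lemma is precisely the device that solves this: it extends $\beta_1$ to a globally defined Liouville form $\widehat\beta$ on $W_\infty$ that equals $\lambda\beta_0$ outside $W_T$ for a suitably \emph{large} constant $\lambda>0$, using a cutoff $\widehat\beta=\rho\beta_1+(1-\rho)\lambda\beta_0$ and the taming hypothesis to show $d\widehat\beta$ stays tamed by $J$. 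The freedom to rescale $\beta_0$ by a large $\lambda$ is crucial and is what your sketch is missing: it beats the bad $dt\wedge\mu$ term coming from $\rho'<0$ in the interpolation, after which the Moser vector field between $\widehat\beta$ and $\lambda\beta_0$ is automatically compactly supported because the two forms agree outside $\overline{W_T}$, and the remaining step is the elementary isomorphism of contactizations of $\beta_0$ and $\lambda\beta_0$. Without this interpolation-plus-rescaling argument, the long-time existence of your Moser flow (and hence your $\phi_1$) is unjustified, and I believe it genuinely fails for some admissible $\beta_1$.
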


In our situation, we want to apply the above proposition to the 
complex structure on $\D(r)$ obtained by pushing forward, via $\exp_p$, some complex
structure on $\xi_p$ tamed by $d\alpha_p$.

\begin{proposition}[proved in Section~\ref{sec:gettingJ}]
\label{prop:Jtamed}
 Given a compatible metric structure $(\alpha, g, J)$ on the contact
manifold $(M,\xi)$, with sectional curvature bounded as in \eqref{eq:kappa-sec-K}, 
%with sectional curvature bounded $\kappa\leq \text{\rm sec}(g)\leq K$, 
the  complex structure $(\exp^\xi_p)_* J_p$ is tamed by the restriction of
$d\alpha$ to $\D(r)$ whenever 
\[
r<\min\left(\rtransverse,\frac 1{(1+2n(n-1))\overline{H}}\right),
\]
where the constants are defined at the beginning of this section. 
\end{proposition}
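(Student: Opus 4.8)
The plan is to estimate how much the complex structure $(\exp^\xi_p)_* J_p$ and the $2$--form $d\alpha$ can drift away, along a radial geodesic, from their values at $p$, where they are tamed by construction. Write $J_r$ for the push-forward of $J_p$ under the exponential map restricted to $\xi_p$, evaluated at distance $r$ from $p$, and let $\omega_r := d\alpha|_{\D(r)}$. At $r=0$ we have $\omega_0(u, J_0 u) = \theta' g(u,u) > 0$ for all nonzero $u$, so the taming constant is exactly $\theta'$ there. For $r>0$ I would bound $|\omega_r(u, J_r u) - \omega_0(u, J_0 u)|$ by splitting it as a term controlling the variation of $d\alpha$ (equivalently, of $\phi$, via Equation~\eqref{galpha}) and a term controlling the variation of the transported complex structure. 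The first is handled by Estimate~\eqref{eq:nabla-phi-norm}, $\|\nabla\phi\| \le 2B$, which bounds how fast $\phi$, hence $d\alpha$ restricted to the disk, changes along a unit-speed geodesic; this is where the $B$ in the stated radius bound enters. The second is handled by Jacobi field / Rauch comparison estimates: the pushed-forward vectors $d(\exp^\xi_p)(\cdot)$ are Jacobi fields along the radial geodesic, and their deviation from parallel transport is governed by the curvature bound $\kappa \le \sec(g) \le K$, which produces the $|\sec(g)|$ and the dimensional factor $(1 + 2n(n-1))$ through the $n$ coordinate directions in $\xi_p$ (an $n$-by-$n$ worth of pairwise curvature terms). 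The constants $\overline{H}_1, \overline{H}_2, \overline{H}$ defined at the start of Section~\ref{sec:darboux} are precisely packaged so that $\|d(\exp^\xi_p) - (\text{parallel transport})\|$ and the induced distortion of $d\alpha$ on the disk are controlled by $\overline{H}_1 r$, $\overline{H}_2 r$, and $\overline{H} r$ respectively, for $r \le \rmax$; I would quote the relevant comparison results (Proposition~\ref{prop:riem_convexity}-type Hessian/Jacobi bounds, $\sn_\kappa$, $\ct_K$) rather than rederive them.

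Concretely, the key steps in order: (1) record that at $p$ the taming holds with slack $\theta' \|u\|^2$; (2) express $\omega_r(u, J_r u) - \theta'\|u\|^2$ as a sum of an $\omega$-variation term and a $J$-variation term, using that $J_r$ is obtained by push-forward and that on $\xi_p$ we chose $J_p$ tamed by $d\alpha_p$; (3) bound the $J$-variation via Jacobi field comparison on the interval $[0,r]$ with $r < \rmax$, getting something like $C_1(1 + 2n(n-1))|\sec(g)|\,\overline{H}_1 r$ relative to $\|u\|^2$; (4) bound the $\omega$-variation via integrating $\|\nabla\phi\| \le 2B$ along the geodesic, getting something like $C_2 B\, \overline{H}_1 r\,\|u\|^2$; (5) combine (3) and (4), absorb all dimensional and curvature constants into $\overline{H}$, and observe that as long as $r < 1/((1+2n(n-1))\overline{H})$ the total drift is strictly less than $\theta'\|u\|^2$, leaving the taming inequality $\omega_r(u, J_r u) > 0$ intact; the extra constraint $r < \rtransverse$ is inherited from needing $\D(r)$ embedded with $R_\alpha$ transverse (Proposition~\ref{prop:t-estimate}) so that $d\alpha|_{\D(r)}$ is genuinely symplectic and $J_r$ is genuinely defined on it.

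The main obstacle I expect is the bookkeeping in step (3): one must control not just $\|d(\exp^\xi_p)v - P_r v\|$ (where $P_r$ is parallel transport) but the bilinear quantity $d\alpha_{\gamma(r)}(d(\exp^\xi_p)u,\; J_{\gamma(r)} d(\exp^\xi_p)u)$, which mixes the Jacobi-field distortion in \emph{both} slots and also feeds through the variation of $d\alpha$ evaluated at the distorted vectors. Getting the dimensional factor to come out as exactly $1 + 2n(n-1)$ — rather than something like $n^2$ — requires care about which curvature terms actually appear: the diagonal sectional terms ($2n$ of them, but paired by the $J$-antisymmetry of $\II$, Equation~\eqref{eqn:trII}, so effectively $n$) versus the off-diagonal mixed terms ($2n(n-1)$ of them). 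I would organize this using the orthonormal frame $v_1, Jv_1, \dots, v_n, Jv_n$ of $\xi_p$, expand $u$ in that frame, and track the contribution of each pair, using the antisymmetry relations from Proposition~\ref{CRformula} to collapse the diagonal contributions. Once the frame computation is set up, the actual inequalities are routine applications of the $\sn$/$\ct$ comparison estimates recorded in Subsection~\ref{ss:riemannianconvexity}.
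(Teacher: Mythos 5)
Your plan is essentially the paper's: push forward a symplectic basis $X_1,\dots,X_n,\,Y_i=JX_i$ of $\xi_p$ by $\exp^\xi_p$, expand $u=\sum a_iX_i+b_iY_i$ in the resulting frame on $\D(r)$ so that $(\exp^\xi_p)_*J_p$ acts by rotation, bound the drift of the coefficient functions $F(r)=\frac{1}{\theta'}d\alpha(X,Y)$ by a derivative estimate $|F'(r)|\le\overline{H}$ (Lemma~\ref{lem:estimateFF'}) built exactly from the ingredients you name (Rauch/Jacobi comparison giving $\|X(r)\|\le H_1$, $\|\nabla_{\dot\gamma}X\|\le H_2$ in Lemma~\ref{lem:XY}, plus $\|\nabla\phi\|\le 2B$ and $\|\nabla R_\alpha\|\le B$ from Proposition~\ref{3Dcurvature}), and then count terms. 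The $\rtransverse$ constraint plays the role you assign it.

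The one substantive inaccuracy is your conjectured origin of the factor $1+2n(n-1)$. Equation~\eqref{eqn:trII} plays no role here; it is only used in the Levi-form computation of Section~\ref{sec:tauPSestimate}, and nothing in the diagonal terms ``collapses'' via $\II$-antisymmetry. The count is much cruder: the $n$ diagonal terms $(a_i^2+b_i^2)\frac{1}{\theta'}d\alpha(X_i,Y_i)$, each bounded below by $(1-\overline{H}r)(a_i^2+b_i^2)$, supply the leading $1$; the off-diagonal sum runs over $n(n-1)$ ordered pairs $(i,j)$, $i\ne j$, each contributing four products $|a_i||b_j|$, $|a_i||a_j|$, $|b_i||b_j|$, $|a_j||b_i|$, every one at most $\frac12 N(u)$ where $N(u)=\sum(a_i^2+b_i^2)$ is the auxiliary norm, so the off-diagonal loss is at most $n(n-1)\cdot2\,\overline{H}r\,N(u)$, giving the $2n(n-1)$. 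It is a plain Cauchy--Schwarz estimate in $N(u)$, not a curvature-pairing argument.
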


The previous two propositions will guaranty that the pull back of the contact
structure on $\C(r)$ via $E$ will be standard, that is embed in the standard
contact structure on $\R^{2n+1}$, thus completing Step II. So we are left to
complete Step III by estimating the size of a geodesic ball that can be embedded
in such a cylinder. We can make such an estimate in a more general context that
does not involve anything from the special geometry of compatible metrics except
that the Reeb field is geodesic.

\begin{proposition}[proved in Section~\ref{sec:cyl2ball}]
\label{prop:cyl2balls}
Let $(M, g)$ be a complete Riemannian manifold whose sectional curvature is
bounded above by $K$. Let $X$ be a unit norm geodesic vector field on $M$ and
$p$ a point in $M$. Consider the disk
\[
\D(r_0) := \{\exp_p(v) :\; v \in X_p^\perp, \|v\| < r_0 \}%
\quad \text{with}\quad%
r_0 < \min\left(\frac{\injg}2, \frac{\pi}{2\sqrt{K}}\right).
\]
We denote by $n$ a unit vector field positively transverse to $\D(r_0)$
and assume we have the following estimate along a radial geodesic
$\gamma$ 
\begin{equation}\label{eq:angle-est}
\g{X(\gamma(r)), n(\gamma(r))} \geq 1 - P(r),
\end{equation}
where $P=P(r)\geq 0$ depends only on the distance $r$ to $p$ and $P(r)\leq 1$ on $[0,r_0]$. 
Then the cylinder $\C(r_0)=\Phi\bigl( (-\infty, \infty) \times \D(r_0)\bigr)$ given by the 
flow $\Phi$ of $X$ contains a geodesic ball of radius 
\[
\sn_K^{-1}\big((1 - P(r_0)) \sn_K(r_0)\big)
\]
about $p$.
\end{proposition}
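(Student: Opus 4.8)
The plan is to show that every point $q$ at distance $< \sn_K^{-1}\big(\sn_K(r_0)(1-P(r_0))\big)$ from $p$ lies on the flow line of $X$ through some point of $\D(r_0)$, i.e.\ that the radial geodesic from $p$ to $q$ can be ``straightened along the flow'' onto $\D(r_0)$. First I would fix a unit-speed geodesic $\gamma$ emanating from $p$ and consider the function $u(r) = d\big(\gamma(r), \D(r_0)\big)$, or more conveniently the signed ``flow time'' needed to reach $\D(r_0)$ from $\gamma(r)$; the real quantity to control is how the distance from $p$ of the foot point on $\D(r_0)$ grows as $r$ grows. The key geometric input is the hypothesis \eqref{eq:angle-est}: since $X$ is a unit geodesic field and $n$ is the unit normal to $\D(r_0)$, the inner product $\g{X,n} \ge 1 - P(r)$ says the flow lines of $X$ make a small angle with the normal direction of $\D(r_0)$, so flowing a short time moves a point nearly radially.

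The main step is a differential inequality for the radius of the foot point. Write $q = \gamma(r)$ and let $\Phi(-s(q), q) \in \D(r_0)$ be the point obtained by flowing back to the disk; set $\varrho(r) = d\big(p, \Phi(-s(\gamma(r)), \gamma(r))\big)$, the radius in $\D(r_0)$ of the foot point. I would differentiate $\varrho$ along $\gamma$: the derivative of $\varrho$ is the inner product of $\gamma'(r)$ with the gradient of the (disk-)distance function, and using the upper curvature bound $K$ together with the Rauch/Hessian comparison (Proposition~\ref{prop:riem_convexity}, or rather the companion $\sn_K$ comparison for Jacobi fields) one gets a bound of the form $\varrho'(r) \le \big(\sn_K \circ r\big)$-type factor divided by $\sn_K(\varrho)$, corrected by the angle deficit $1 - \g{X,n} \le P(r)$. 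Concretely I expect an inequality like
\[
\frac{d}{dr}\, \sn_K(\varrho(r)) \;\le\; \frac{1}{1 - P(r)} \;\le\; \text{something integrating to } \frac{\sn_K(r)}{1-P(r_0)}
\]
after using monotonicity of $P$ — the cleaner route is to show directly that $\sn_K(\varrho(r)) \le \sn_K(r)/(1-P(r_0))$ fails to be the right direction, and instead bound $\sn_K(r) \le \sn_K(\varrho(r))/(1 - P(r_0))$, equivalently $\varrho(r) \ge \sn_K^{-1}\big(\sn_K(r)(1-P(r_0))\big)$ is NOT what we want; rather we need an \emph{upper} bound on the $p$-distance of the foot point, so I would instead track $r \mapsto \varrho$ and prove $\sn_K(r) \le \sn_K(\varrho)\,/\,(1-P(r))$ is automatic and that conversely, if $q$ lies within the claimed radius, then $\varrho(r) < r_0$, so $q \in \C(r_0)$. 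Running this from $r = 0$ (where $\varrho = 0$) and integrating using $P(r) \le P(r_0)$ on $[0,r_0]$ yields precisely $\varrho < r_0$ whenever $r < \sn_K^{-1}\big(\sn_K(r_0)(1 - P(r_0))\big)$.

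The technical heart — and the step I expect to be the main obstacle — is justifying the comparison estimate for the derivative of the foot-point radius, because $\D(r_0)$ is a curved hypersurface (image of a disk under $\exp_p$), not a totally geodesic one, so one must control the second fundamental form of $\D(r_0)$ and the way the flow of $X$ interacts with it. The curvature bound $\sec(g) \le K$ and $r_0$ below the convexity radius give convexity of geodesic spheres (Proposition~\ref{prop:riem_convexity}), and I would use this to compare the distance-to-$\D(r_0)$ function with the model on a space form of constant curvature $K$, where the computation is explicit and produces the $\sn_K$ factors. The angle hypothesis \eqref{eq:angle-est} enters exactly to bound the ``drift'' term coming from the fact that flowing along $X$ is not flowing along the geodesic normal to $\D(r_0)$. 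Once the one-variable comparison inequality is in hand, the conclusion is a routine integration and an application of the intermediate value theorem to see that $\C(r_0)$, being $X$-invariant and containing $\D(r_0)$, contains the asserted geodesic ball about $p$.
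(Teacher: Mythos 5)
Your proposal takes a genuinely different route from the paper's — you aim for a differential inequality / Gr\"onwall-type argument along a radial geodesic from $p$, tracking the ``foot point'' on $\D(r_0)$ and comparing $\sn_K(\varrho(r))$ with $\sn_K(r)$. The paper instead gives a one-shot global comparison: take the maximal geodesic ball $B_p(\bar r)$ inside the cylinder, let $q$ be a point where it touches $\partial\C(r_0)$ and $v$ the point where the $X$-orbit through $q$ meets $\D(r_0)$ (so $d(p,v)=r_0$), form the geodesic hinge $T(p,q,v)$ with the $X$-orbit as one side, bound the angle at $v$ using \eqref{eq:angle-est} via $\sin\varphi\geq\cos\phi\geq 1-P(r_0)$, and then apply Toponogov's hinge comparison together with the spherical law of sines in the constant-curvature model to get $\sn_K(\bar r)\geq\sn_K(r_0)(1-P(r_0))$. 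The crucial feature of this approach is that the hypothesis controls $\g{X,n}$ only \emph{at points of} $\D(r_0)$, and the hinge argument applies it exactly once, at the boundary point $v$ where it is available.

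There are genuine gaps in your sketch. First, the central differential inequality for $\varrho'(r)$ is never actually established; you acknowledge this explicitly (``the step I expect to be the main obstacle'') and do not resolve it. Deriving it would require controlling how the foot point moves as $r$ varies, which involves the second fundamental form of the (non-totally-geodesic) disk and the variation of the flow time $s(\gamma(r))$ — none of which is controlled by the hypothesis, which only gives the angle bound \emph{on the disk itself}. Second, and compounding this, your write-up betrays real confusion about which direction the desired inequality should go (you write that $\sn_K(\varrho(r))\leq \sn_K(r)/(1-P(r_0))$ ``fails to be the right direction'' and then contradict yourself two lines later); this is in fact the inequality you would want, but the indecision suggests the geometric picture isn't clear. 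Third, well-definedness and smoothness of the foot-point map $q\mapsto\Phi(-s(q),q)$ is assumed without justification; near where $X$ could become nearly tangent to $\D(r_0)$ this would be an issue, and you would need a preliminary argument (which the paper gets essentially for free from convexity) before the ODE is even set up. As it stands the sketch names a plausible strategy but does not contain a proof; the paper's hinge-plus-Toponogov argument avoids all of these difficulties by never needing the angle estimate off of $\D(r_0)$.
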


We can now prove Theorem~\ref{thm:GeometricDarbouxHigher} estimating the size of
a Darboux ball.

\begin{proof}[Proof of Theorem~\ref{thm:GeometricDarbouxHigher}]
By Proposition~\ref{prop:t-estimate}, if $r < \rtransverse$ then  
then $\D(r)$ is embedded in $M$ and the Reeb vector field $R_\alpha$ is
transverse to $\D(r)$. Since $R_\alpha$ is transverse to $\D(r)$ the restriction
of the contact form $\alpha$ to $\D(r)$ is a primitive for an exact symplectic
form $d\alpha$ on $\D(r)$. 

Let $\beta$ denote the pull back of $\alpha|_{\D(r)}$ to $D_\xi(r)$ by the
exponential map. The contactization of $(D_\xi(r),\beta)$ is the contact
structure on $\R\times D_\xi(r)$ coming from the  contact form $dt+ \beta$ and
$E:\R\times D_\xi(r)\to M$ is a contact immersion.  In fact $E^\ast\alpha=dt+\beta$ since 
$dE(\partial_t)=R_\alpha$, $\mathcal{L}_{R_\alpha}\alpha=0$, and $\alpha(R_\alpha)=1$, thus 
$\xi=\ker\alpha$ lifts to $\ker(dt+\beta)$.

The bound of Proposition \ref{prop:Jtamed} implies that if 
\begin{equation}\label{eq:db-est2}
r < \rtamed := \min\left(\rtransverse,\frac 1{(1+2n(n-1))\overline{H}}\right),
\end{equation}
then $d\alpha$ on $\D(r)$ is tamed by $J=(\exp^\xi_p)_\ast J_p$. Thus $\beta=(\exp^\xi_p)^*\alpha|_{\D(r)}$ on $D_\xi(r)\subset \xi_p$ is tamed by $J_p$. Of course the standard Liouville form $\lambda=\sum_{i=1}^n y_i\, dx_i$ is adapted to $J_p$. Thus by Proposition~\ref{prop:embeddings} the contactization of any open subdomain of $D_\xi(r)$ will contact embed in the contactization of $(\R^{2n},\lambda)$, that is in the standard contact structure on $\R^{2n+1}$. 

We are left to estimate the maximal size of a geodesic ball $B_p(r)$ about $p$
that can be embedded inside the cylinder $\C(r)$ (which of course can then be
lifted via $E$ to the contactization of $(D_\xi(r), \beta)$, as above.)
%In order to apply Proposition~\ref{prop:cyl2balls}, we first note that 
%\begin{equation}\label{eq:A-B-to-convex}
%\frac{2}{\sqrt{2\,A+B^2}+B} \leq \frac2{\sqrt{2A}} \leq
%\frac{\pi}{2\sqrt{|sec(g)|}},
%\end{equation}
%so the assumption involving $K$ in Proposition~\ref{prop:cyl2balls}
%is automatic when $r$ is less than $\rtamed$.
Setting $P(r)=Br+\frac 12 Ar^2$ we see from Equation~\eqref{eq:gRnD} that 
\[
\langle R_\alpha(r),n_{\mathsf{D}}(r)\rangle \geq 1- P(r)
\]
and Proposition~\ref{prop:cyl2balls} allows to embed a geodesic ball of
radius $Q(\rtamed)$ inside the cylinder $\C(\rtamed)$ hence this ball is
standard. 
\end{proof}

%%%%%%%%%%%%%%%%%%%%%%%%%%%%%%%%%%%%%%%%%%%%%%%%%%%%%%%%%%
\subsection{Twisting estimates.}\label{S:t_p-est}
%%%%%%%%%%%%%%%%%%%%%%%%%%%%%%%%%%%%%%%%%%%%%%%%%%%%%%%%%%
Throughout this subsection we will assume that
$r \in [0,\rmax)$ and use the notation established at the beginning of Section \ref{sec:darboux}. 
Our goal is to prove Proposition~\ref{prop:t-estimate} which estimates, along a radial geodesic, 
the angle $\g{R_\alpha, \nD}$
between the Reeb vector field and the disks $\D(r)$ as
\[
\g{R_\alpha, \nD} \geq 1 - Br -\frac 12 Ar^2,
\]
where $\nD$ is the unit normal vector to $\D(r)$ which coincides with $R_\alpha$
at $p$.

First notice that, since $\rmax$ is less than the injectivity radius of
$g$, the disk $\D(r)$ is embedded for any $r \leq \rmax$.
Next we show that the estimate above implies the transversality result in the proposition.
The Reeb field $R_\alpha$ is transverse to $\D(r)$ as long as
$\g{R_\alpha, \nD}$ is positive.
Because the roots of $A\,t^2+2 B\,t-2$ are $(-B\pm\sqrt{2 A+B^2})/A$, this is
guaranteed whenever $r$ is less than $\frac{-B+\sqrt{2A+B^2}}{A}=\frac{2}{\sqrt{2\,A+B^2}+B}>0$. Then, we obtain
\[
	\min\left(\rmax, \frac{2}{\sqrt{2\,A+B^2}+B}\right) = \rtransverse.
\]

We now establish the estimate on $\g{R_\alpha, \nD}$. Let $q$ be any point of
$\D(r)$ at some distance $r_q$ from $p$. We denote by $\gamma$ the radial
geodesic between $p$ and $q$. Estimating along $\gamma$ we have
\begin{equation}\label{eqn:est1}
\begin{split}
|\langle R_\alpha, \nD\rangle_{q}-1| &= 
 |\langle R_\alpha, \nD\rangle_{q}-\langle R_\alpha,
 \nD\rangle_{p}| \\
  & = \left| \int_0^{r_q} \frac{d}{ds} \langle R_\alpha, \nD\rangle_{\gamma(s)}\, ds\right|
   = \left|\int^{r_q}_0 \nabla_{\dot{\gamma}} \langle  R_\alpha, \nD\rangle\, 
 ds\right|\\
  & \leq \int^{r_q}_0 |\langle \nabla_{\dot{\gamma}}R_\alpha, \nD\rangle|+
  |\langle R_\alpha,\nabla_{\dot{\gamma}} \nD\rangle | \, ds.
\end{split}
\end{equation}
In the first term, $|\g{\nabla_{\dot{\gamma}}R_\alpha, \nD}|$ is less than or equal to  
$\|\nabla_{\dot{\gamma}}R_\alpha\|$ and thus Equation~\eqref{eq:nabla-reeb} from 
Proposition~\ref{3Dcurvature} gives
\begin{equation}\label{eqn:est3}
 |\langle \nabla_{\dot{\gamma}}R_\alpha, \nD\rangle| \leq  B.
\end{equation}
Hence we are left to estimate $\g{R_\alpha(s), \nabla_{\dot\gamma} \nD(s)}$ for
all $s\in[0,r_q]$. We now fix such an $s$.
Because $s\leq r_q$ is less than the injectivity radius the Gauss lemma guarantees the
differential of the exponential map at $p$ gives us an isomorphism between the
orthogonal complement of $\dot\gamma(0)$ in $T_pM$ and the orthogonal
complement of $\dot\gamma(s)$ in $T_{\gamma(s)}M$. Using this and the fact that 
$\D(r)$ is the image of $D_\xi(r)$ under the exponential map one can
construct Jacobi fields $J_1, \dots, J_{2n - 1}$ along $\gamma$ such that
\begin{itemize}
	\item[$(i)$] $J_i(0) = 0$ and $J_i'(0)$ is in $\xi_p$,
	\item[$(ii)$] all $J_i$ are tangent to $D$, and
	\item[$(iii)$]  $\{\nD, \dot\gamma, J_1, \dots, J_{2n - 1}\}$ is an orthonormal basis
		at $\gamma(s)$ (but not, {\em a priori}, at any other $\gamma(r)$).
\end{itemize}
We now derive an estimate on the derivative of the Jacobi fields that will be needed below. 
\begin{lemma}
\label{lemma:nDJprime}
With the notation as above, 
let $J$ be any of the $J_i$ Jacobi fields. Then 
\begin{equation}
\label{eq:nDJprimeFinal}
\left|\g{\nD,J'(s)}\right| \leq \frac{4}{3}|\sec(g)| \,s.
\end{equation}
\end{lemma}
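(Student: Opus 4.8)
The plan is to recognize $\g{\nD,J'}$ at $\gamma(s)$ as an off--diagonal entry of the Hessian of the distance function $\mathsf r := \mathsf r_p$ and then invoke Riemannian Hessian comparison. The geometric input is that $\D = \D(r)$ is swept out by the radial geodesics $t\mapsto \exp_p(tv/|v|)$ issuing from $p$ in directions lying in $\xi_p$; hence the gradient field $\nabla\mathsf r$ is tangent to $\D$, so in particular $\nabla\mathsf r = \dot\gamma$ along $\gamma$ and the unit normal $\nD$ is $g$--orthogonal to $\nabla\mathsf r$ everywhere on $\D$. Fix $J=J_i$ and a point $\gamma(s)$ with $0<s<\rmax$ (the case $s=0$ is trivial, since $J'(0)\in\xi_p\perp\nD$), so that $\mathsf r$ is smooth near $\gamma(s)$ because $s<\rmax\le\injg$; recall that there $\{\nD,\dot\gamma,J_1,\dots,J_{2n-1}\}$ is orthonormal, so $J$ and $\nD$ are orthonormal and both lie in $(\nabla\mathsf r)^\perp$. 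Extend $J|_\gamma$ to a vector field $Y$ defined near $\gamma(s)$, agreeing with $J$ along $\gamma$ and tangent to $\D$ along $\D$; then $\nabla_{\dot\gamma}Y=\nabla_{\dot\gamma}J=J'$ along $\gamma$. Using $\dot\gamma=\nabla\mathsf r$, torsion--freeness of $\nabla$, and the fact that $[\nabla\mathsf r,Y]$ is tangent to $\D$ (bracket of two fields tangent to $\D$, hence orthogonal to $\nD$ at $\gamma(s)$), one gets at $\gamma(s)$
\[
\g{\nD,J'} = \g{\nD,\nabla_{\nabla\mathsf r}Y} = \g{\nD,\nabla_Y\nabla\mathsf r} + \g{\nD,[\nabla\mathsf r,Y]} = \nabla^2\mathsf r(J,\nD).
\]

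Next I would apply Hessian comparison. Since $\kappa\le\sec(g)\le K$ and $s<\rmax$ lies below the injectivity radius (and below $\pi/(2\sqrt K)$ when $K>0$), the comparison theorems --- Proposition~\ref{prop:riem_convexity} for the lower bound and the standard Hessian comparison using $\sec\ge\kappa$ for the upper one --- give, on $(\nabla\mathsf r)^\perp$ at $\gamma(s)$,
\[
\ct_K(s)\,g \;\le\; \nabla^2\mathsf r \;\le\; \ct_\kappa(s)\,g .
\]
Hence the symmetric bilinear form $\nabla^2\mathsf r-\frac12\bigl(\ct_\kappa(s)+\ct_K(s)\bigr)g$ restricted to $(\nabla\mathsf r)^\perp$ has operator norm at most $\frac12\bigl(\ct_\kappa(s)-\ct_K(s)\bigr)$, so, using that $J\perp\nD$ are unit vectors in $(\nabla\mathsf r)^\perp$,
\[
\g{\nD,J'} = \nabla^2\mathsf r(J,\nD) = \Bigl(\nabla^2\mathsf r-\frac12\bigl(\ct_\kappa(s)+\ct_K(s)\bigr)g\Bigr)(J,\nD) \le \frac12\bigl(\ct_\kappa(s)-\ct_K(s)\bigr).
\]

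Finally, I would check the elementary scalar estimate $\ct_\kappa(s)-\ct_K(s)\le\frac83|\sec(g)|\,s$ for $0<s<\rmax$: writing $\Lambda := |\sec(g)| = \max(|\kappa|,|K|)$ and splitting the difference as $(\ct_\kappa(s)-1/s)+(1/s-\ct_K(s))$, the first summand is at most $\frac13\Lambda s$ (from $\coth t-1/t\le t/3$ for $t>0$, or from $\ct_\kappa\le\ct_0$ when $\kappa\ge0$) and the second is at most $\Lambda s$ (it is $\le0$ when $K\le0$, and equals $\sqrt K\,(1/t-\cot t)$ with $t=\sqrt K s\le\pi/2$ when $K>0$, where $1/t-\cot t\le t$). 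Combining with the previous display yields $\g{\nD,J'}\le\frac43|\sec(g)|\,s$, as claimed (the same argument bounds $|\g{\nD,J'}|$, not merely $\g{\nD,J'}$).

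The only real content is the reduction in the first paragraph, namely the identity $\g{\nD,J'}=\nabla^2\mathsf r(J,\nD)$, which rests entirely on $\D$ being foliated by radial geodesics so that $\nabla\mathsf r$ is tangent to it; everything afterwards is routine comparison geometry together with an elementary inequality, the only points requiring care being the orthogonality bookkeeping at $\gamma(s)$ and the fact that the stated constant $\frac43$ leaves ample slack.
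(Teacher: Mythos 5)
Your proof is correct and takes a genuinely different route from the paper's. The paper integrates the Jacobi equation in a parallel frame, invokes $\g{\nD,J'(0)}=0$ to kill the boundary term, and bounds the integrand with the curvature-operator estimate $|R|\leq\frac43|\sec(g)|$ together with monotonicity of $\|J(t)\|$ (itself a one-sided Hessian comparison). You instead rewrite the target as the off-diagonal Hessian entry $\nabla^2\mathsf{r}(J,\nD)$ via the torsion-free identity $\nabla_{\nabla\mathsf{r}}Y-\nabla_Y\nabla\mathsf{r}=[\nabla\mathsf{r},Y]$ with the bracket tangent to $\D$, then apply \emph{two-sided} Hessian comparison $\ct_K(s)\,g\leq\nabla^2\mathsf{r}\leq\ct_\kappa(s)\,g$ on $(\nabla\mathsf{r})^\perp$, and close with an elementary scalar estimate on $\ct_\kappa-\ct_K$. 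One thing your route buys is robustness at a delicate point: $\nD$ is the unit normal to $\D(r)$ and is not a priori parallel along $\gamma$, so pairing the paper's parallel-frame integral formula with $\nD(\gamma(s))$ produces a boundary term (the inner product of the parallel transport of $\nD(\gamma(s))$ back to $p$ with $J'(0)$) whose vanishing requires an argument; your Hessian identity never needs to track how $\nD$ rotates along $\gamma$. Your split also yields the slightly sharper constant $\frac23|\sec(g)|\,s$, while the paper's method is marginally more elementary in invoking only one-sided comparison.
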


\begin{proof}
Integrating the Jacobi equations $J'' + R(J,\gamma')\gamma' = 0$ component-wise in a parallel
moving frame along $\gamma$, one obtains  
$J'(s) = J'(0) - \int_0^{s} R(J(t), \gamma'(t))\gamma'(t)\, dt$
where the integral is again component-wise integration in a parallel frame.
Hence, using $\g{\nD, J'(0)} = 0$ and the bound $|R| \leq \frac 43 |\sec(g)|$
(see, for example, \cite[page 95]{Chavel06}) one obtains
\begin{equation}
\label{eq:nDJprimeTemporary}
\left|\g{\nD,J'(s)}\right| \leq  \frac{4}{3}|\sec(g)| \max_t \|J(t)\| \,s.
\end{equation}

We claim that for $s\leq\min\bigl(\injg,\frac{\pi}{2\sqrt{K}}\bigr)$
the function 
$\|J(t)\|$  is increasing on the interval $[0,s]$. Indeed, if $\mathsf{r}$ is the
distance function from $p$ and $\nabla^2\mathsf{r}$ denotes its Hessian, then a
simple computation (or see \cite[Lemma III.4.10,
p. 109]{Sakai96}) coupled with 
Proposition \ref{prop:riem_convexity} yields
\begin{align*}
\frac{\partial}{\partial t} \|J(t)\|^2 & = 2g(\nabla_{\dot{\gamma}(t)}J(t),J(t))
=  2 \nabla^2 \mathsf{r} (J(t),J(t))\\
& \geq (\ct_K\circ\mathsf{r}(\gamma(t))\, \|J(t)\|^2
=\text{ct}_K(t)\, \|J(t)\|^2,
\end{align*}
and the length of $J(t)$ must increase until the first zero of $\ct_K(t)$,
which occurs at $t=\frac{\pi}{2\sqrt{K}}$ for $K>0$ and does not exist
otherwise.
Because $\|J(t)\|^2$ is increasing $\|J(t)\|\leq \|J(s)\|=1$ and Inequality~\eqref{eq:nDJprimeTemporary} simplifies to the promised Estimate~\eqref{eq:nDJprimeFinal}.
\end{proof}

At the point $\gamma(s)$, we can decompose the Reeb field as
$R_\alpha = R_\gamma \dot\gamma + R_n \nD + \sum R_i J_i$. 
The first two terms do not contribute to the scalar product with 
$\nabla_{\dot\gamma} \nD$ since $\nD$ is normal to $\gamma$ and $\gamma$ is a
geodesic. 
We can now estimate
\begin{align*}
|\g{R_\alpha, \nabla_{\dot\gamma} \nD}(\gamma(s))| &\leq  
\sum_{i = 1}^{2n - 1} |R_i \g{J_i(s), \nabla_{\dot\gamma} \nD}| 
= \sum_{i = 1}^{2n - 1} |R_i \g{\nabla_{\dot\gamma} J_i(s), \nD}|,
\end{align*}
where the last equality follows because $\g{J_i(t), \nD} = 0$ for all $t$.

Notice that $R_\gamma^2 + R_n ^2 + \sum R_i^2 = \|R_\alpha\|^2 = 1$ so 
$\sum R_i^2 \leq 1$.
Equation~\eqref{eq:nDJprimeFinal} from Lemma~\ref{lemma:nDJprime}
and the Cauchy-Schwarz inequality give:
\[
|\g{R_\alpha, \nabla_{\dot\gamma} \nD}(q)| \leq \sqrt{2n - 1}\,\frac{4}{3}|\sec(g)| s.
\]

Using this, Equation~\eqref{eqn:est1}, and  Equation~\eqref{eqn:est3} we see that
\begin{equation*}
\begin{split}
|\langle R_\alpha, \nD\rangle_{q}-1|
  & \leq \int^{r_q}_0 |\langle \nabla_{\dot{\gamma}}R_\alpha, \nD\rangle|+
  |\langle R_\alpha,\nabla_{\dot{\gamma}} \nD\rangle | \, ds\\
	&\leq Br_q+ \left( \sqrt{2n-1}\frac43 |\sec(g)| \right)\frac{r_q^2}2\\
  &= Br_q + \frac 12 Ar_q^2,
\end{split}
\end{equation*}
from which Equation~\eqref{eq:gRnD} easily follows.  \xqed

%%%%%%%%%%%%%%%%%%%%%%%%%%%%%%%%%%%%%%%%%%%%%%%%%%%%%%%%%%
\subsection{Embedding contactizations} 
\label{sec:contactizations}
%%%%%%%%%%%%%%%%%%%%%%%%%%%%%%%%%%%%%%%%%%%%%%%%%%%%%%%%%%
This subsection contains a proof of Proposition~\ref{prop:embeddings}.
We begin with a simple lemma about embedding contactizations. Through out this subsection we will be using notation established at the beginning of the section. 

\begin{lemma}[Interpolation lemma]
Let $(W,\beta_0)$ be a Liouville domain and $(W_\infty, \beta_0)$ its completion. 
Suppose $\beta_1$ is a 1--form on $W_T$ (for some $T>1$) such that
$d\beta_1$ is a symplectic form on $W_T$ and 
there is an almost complex structure which is adapted to $\beta_0$
and tamed by $d\beta_1$. 
Then for any $T_0\in [1,T)$ there is a positive constant $\lambda$ and a Liouville form $\widehat \beta$ on
$W_\infty$ such that 
\begin{itemize}
	\item[$(i)$] $\widehat{\beta} = \lambda\beta_0$ outside $W_T$,
	\item[$(ii)$] $\widehat{\beta} = \beta_1$ on $W_{T_0}$, and
	\item[$(iii)$] $d\widehat\beta$ is tamed by $J$.
\end{itemize}
\end{lemma}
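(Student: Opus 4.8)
The plan is to construct $\widehat\beta$ by linearly interpolating between $\beta_1$ and a suitable large multiple $\lambda\beta_0$ across the collar region $W_T\setminus W_{T_0}$, using a cutoff function, and then to check that for $\lambda$ large enough the resulting form is still a Liouville form whose differential is tamed by the given adapted almost complex structure $J$. Concretely, pick a smooth nondecreasing function $\chi\colon[T_0,T]\to[0,1]$ with $\chi\equiv 0$ near $T_0$ and $\chi\equiv 1$ near $T$, extend it by $0$ on $W_{T_0}$ and by $1$ on $W_\infty\setminus W_T$, and set
\[
\widehat\beta = (1-\chi)\beta_1 + \chi\,\lambda\beta_0
\]
on $W_T$, with $\widehat\beta=\lambda\beta_0$ outside. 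Properties $(i)$ and $(ii)$ are then immediate from the definition of $\chi$.

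The substance is in checking $(iii)$, and simultaneously that $d\widehat\beta$ is symplectic (which follows from taming since a $J$-tame $2$-form is nondegenerate, as noted in the remark preceding Proposition~\ref{prop:embeddings}). First I would compute
\[
d\widehat\beta = (1-\chi)\,d\beta_1 + \chi\,\lambda\,d\beta_0 + d\chi\wedge(\lambda\beta_0-\beta_1).
\]
For a vector $u$, I want $d\widehat\beta(u,Ju)>0$. The first two terms are handled by hypothesis: $d\beta_1(u,Ju)>0$ since $J$ is $d\beta_1$-tame, and $d\beta_0(u,Ju)>0$ since $J$ is adapted to $\beta_0$ hence $d\beta_0$-tame. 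The only potentially bad term is $d\chi\wedge(\lambda\beta_0-\beta_1)$, which is supported in the compact collar $W_T\setminus W_{T_0}$. On that compact set, $d\beta_0(u,Ju)\geq c\|u\|^2$ for some $c>0$, while $|d\chi\wedge(\lambda\beta_0-\beta_1)(u,Ju)|\leq (C_1\lambda + C_2)\|u\|^2$ — and here I need to exploit the structure of the collar: $d\chi$ only involves the radial direction $dt$, and the key point is that $\beta_0 = t\mu$ there while $J$ sends $\partial_t$ to a positive multiple of $R_\mu$, so the dangerous pairing $(d\chi\wedge\beta_0)(u,Ju)$ actually contributes with a favorable sign (it is a positive multiple of $\chi'\,dt\wedge\mu$ evaluated on $(u,Ju)$, which is nonnegative since $\chi'\geq0$). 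Thus only the $d\chi\wedge\beta_1$ term is genuinely an error term, bounded by $C_2\|u\|^2$ independently of $\lambda$, so choosing $\lambda > C_2/c$ makes $\chi\lambda\,d\beta_0$ dominate it on the collar, giving $d\widehat\beta(u,Ju)>0$ everywhere.

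Finally I would confirm that $\widehat\beta$ is a genuine Liouville form on all of $W_\infty$: $d\widehat\beta$ is symplectic by the taming just verified, and on the cylindrical end $W_\infty\setminus W_T$ we have $\widehat\beta=\lambda\beta_0=\lambda t\mu$, which restricts to $\lambda\mu$ on each slice $\{t\}\times\partial W$, a positive contact form; this also shows the Liouville vector field points outward at infinity, so the completion structure is respected. The main obstacle I anticipate is precisely the sign bookkeeping in the collar term $d\chi\wedge(\lambda\beta_0-\beta_1)$: one must use the adaptedness conditions $(b)$ and $(c)$ on $J$ (preserving $\ker\mu$ on the slices and sending $\partial_t$ to a multiple of $R_\mu$) to see that the $\lambda\beta_0$ part of this term is harmless rather than growing with $\lambda$, so that a single large choice of $\lambda$ works uniformly on the compact collar.
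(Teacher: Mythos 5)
Your construction is the same as the paper's: linearly interpolate between $\beta_1$ and $\lambda\beta_0$ through a cutoff supported in the collar, and observe that in $d\chi\wedge(\lambda\beta_0-\beta_1)$ the $\lambda\beta_0$ portion has a favorable sign because $\beta_0 = t\mu$ there, $d\chi$ is a nonnegative multiple of $dt$, and $J$ being adapted makes $dt\wedge\mu(u,Ju)\ge 0$. That observation is exactly the crux, and you have it right.

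However, your final step --- ``choosing $\lambda > C_2/c$ makes $\chi\lambda\,d\beta_0$ dominate it on the collar'' --- has a real gap. The term $\chi\lambda\,d\beta_0(u,Ju)$ vanishes where $\chi=0$ and is small whenever $\chi$ is small, no matter how large $\lambda$ is. Near the inner end of the collar, where $\chi$ is close to $0$ but $\chi'$ has started to grow, the only positive term available is $(1-\chi)\,d\beta_1(u,Ju)\approx C_1\|u\|^2$, and you must beat the error $|\chi'|\,K\|u\|^2$ coming from $d\chi\wedge\beta_1$. This requires $|\chi'|$ to be small precisely where $\chi$ is small, which is a constraint on the \emph{shape} of the cutoff, not something a large $\lambda$ can repair. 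A generic nondecreasing cutoff on a short collar, together with a $\beta_1$ whose taming constant $C_1$ is small, will violate this. The paper handles it by introducing a second parameter $\varepsilon$: on the first half of $[T_0,T]$ the cutoff varies by at most $\varepsilon$ (so its derivative is at most $\approx 4\varepsilon/(T-T_0)$, which can be made as small as you like), and positivity comes from the $d\beta_1$ term; on the second half the cutoff is at least $\varepsilon$ away from $1$, so $\chi \geq \varepsilon$ and large $\lambda$ finishes the job. You need this two-parameter choice (first $\varepsilon$ small, then $\lambda$ large) --- or some equivalent careful choice of $\chi$ with $\chi'$ small where $\chi$ is small --- to close the argument; choosing $\lambda$ alone is not enough.
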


\begin{proof}
We set $\widehat \beta = \rho \beta_1 + (1 - \rho)\lambda\beta_0$ where 
$\rho$ is a function with support in $W_T$, equals one on a neighborhood of 
$W_{T_0}$ and which, inside $[1, \infty) \times \partial W$, depends only on $t$
and is non-increasing.

The first two properties of $\widehat \beta$ are obvious from its definition, so we are left to show that
$\lambda$ and $\rho$ can be chosen so that the third property holds. Notice that for this we can
restrict our attention to 
$[1, \infty) \times \partial W$ where we have
\[
d\widehat\beta = (-\rho')\lambda t dt \wedge \mu + \rho' dt \wedge \beta_1
+ \rho d\beta_1 + (1 - \rho) \lambda d\beta_0.
\]
(Recall that $\mu$ is the contact form induced on $\partial W$ by
$\beta_0$.)
By the hypothesis $-\rho'$ is non-negative.
Moreover $dt \wedge \mu(u , Ju)$ is non-negative because $J$ is adapted 
to $\beta_0$.
So we are left to prove that, using carefully chosen $\rho$ and $\lambda$, 
for any non zero $u$ the function
\[
G(u) := \big(\rho' dt \wedge \beta_1 + 
\rho d\beta_1 + (1 - \rho) \lambda d\beta_0\big)(u , Ju)
\]
is positive.

Because $d\beta_0$ and $d\beta_1$ tame $J$, there are positive constants
$C_0$, $C_1$ and $C$ such that
\[
\frac 1{\|u\|^2}G(u)  \geq - |\rho'| C + \rho C_1 + \lambda (1 - \rho) C_0.
\]
We cut the interval $[T_0, T)$ in two halves. On the first interval, $\rho$ will be
almost 1 and $\rho'$ will be almost 0 and on the second interval $\rho$ will decrease to zero.
More precisely, we choose $\rho$ such that
\[
\rho(t) \geq 1 - \varepsilon \;\text{ and }\; 
	| \rho'(t)| \leq \frac{2\varepsilon}{(T - T_0 )/2},\qquad
	\text{ for } t \in [T_0, (T_0 + T)/2], 
\]
and
\[
	\rho(t) \leq 1 - \varepsilon \;\text{ and }\; 
	| \rho'(t)| \leq \frac{2(1 - \varepsilon)}{(T - T_0 )/2}, \qquad
	\text{ for } t \in [(T_0 + T)/2, T].
\]
On the first subinterval, one has
\[
- C |\rho'| + \rho C_1 \geq  
- C \frac{2\varepsilon}{(T - T_0 )/2} + (1 - \varepsilon)C_1
\]
which is positive if $\varepsilon$ is sufficiently small. On the second interval
\[
- C |\rho'| + \lambda(1 - \rho) C_0 \geq  
- C \frac{2(1 - \varepsilon)}{(T - T_0 )/2} + \lambda\varepsilon C_0,
\]
which, for a fixed positive $\epsilon$, is positive if $\lambda$ is
sufficiently large.
\end{proof}

We now consider $\widehat \beta$ as in the above lemma. The contactization of
$(W_{T_0}, \beta_1)$ embeds into the contactization of $(W_\infty, \widehat\beta)$.
Since the contactizations of $\beta_0$ and $\lambda\beta_0$ are isomorphic, 
the following lemma finishes the proof of Proposition~\ref{prop:embeddings}.

\begin{lemma}
The contactizations of $\widehat \beta$ and $\lambda\beta_0$ are isomorphic.
\end{lemma}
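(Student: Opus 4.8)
The plan is to show that the two Liouville forms $\widehat\beta$ and $\lambda\beta_0$ on $W_\infty$, which by property $(i)$ of the Interpolation Lemma already \emph{agree} outside the compact region $W_T$, give isomorphic contactizations; since they differ only on a compact set this should be a soft argument using a Moser-type / Gray-stability deformation combined with the $\R$-translation symmetry of the contactization. First I would consider the linear family $\beta_s = (1-s)\lambda\beta_0 + s\widehat\beta$ for $s\in[0,1]$. Each $\beta_s$ equals $\lambda\beta_0$ outside $W_T$, so $d\beta_s$ is certainly symplectic there; and on $W_T$ we have $d\beta_s(u,Ju) = (1-s)\lambda\, d\beta_0(u,Ju) + s\, d\widehat\beta(u,Ju) > 0$ for all nonzero $u$ since both $d\beta_0$ and $d\widehat\beta$ tame $J$ (property $(iii)$). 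Hence every $\beta_s$ is a Liouville form on $W_\infty$, and on the contactization $\R\times W_\infty$ the $1$-forms $\eta_s := dt + \beta_s$ are all contact forms which agree with $dt+\lambda\beta_0$ outside the compact set $\R_{\mathrm{cpt}}\times W_T$ — except that the $\R$-factor is noncompact, so one must be slightly careful.

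Next I would run the Moser trick in the contactization. One looks for a time-dependent vector field $X_s$ on $\R\times W_\infty$ whose flow $\psi_s$ satisfies $\psi_s^*\eta_s = \eta_0$; equivalently $\eta_s(X_s) $ and the contraction of $X_s$ into $d\eta_s$ are prescribed by $\frac{d}{ds}\eta_s = \widehat\beta - \lambda\beta_0$ (a closed — indeed exact, say $= d h$ for a compactly supported function $h$ on $W_\infty$, since $\widehat\beta-\lambda\beta_0$ is exact and compactly supported by construction) together with a normalization condition on the Reeb direction. Because $\frac{d}{ds}\eta_s$ vanishes outside $W_T$ and is pulled back from $W_\infty$, the resulting $X_s$ can be taken to vanish outside $\R\times W_T$ and, crucially, to have no $\partial_t$-component that prevents integration; its flow is then complete (it moves points only within the compact slab and is bounded), giving a diffeomorphism $\psi_1$ of $\R\times W_\infty$ with $\psi_1^*\eta_1 = \eta_0$, i.e.\ an isomorphism of the two contactizations fixing the ends.

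Alternatively, and perhaps more cleanly, I would avoid the completeness discussion entirely: since $\widehat\beta = \lambda\beta_0$ outside $W_T$ and $\widehat\beta = \beta_1$ on $W_{T_0}$, one only needs the contactization of $(\overline{W}_{T_0},\beta_1)$ to embed in that of $(W_\infty,\widehat\beta)$, and then to observe that the contactization of $(W_\infty,\widehat\beta)$ is \emph{itself} contactomorphic to the contactization of $(W_\infty,\lambda\beta_0)$ by the Moser isotopy above, now genuinely supported in the compact region $W_T$ where things are tame. Finally the contactization of $\lambda\beta_0$ is identified with that of $\beta_0$ by the explicit diffeomorphism $(t,w)\mapsto (\lambda t, w)$, which sends $\ker(dt+\lambda\beta_0)$ to $\ker(d(\lambda t)+\lambda\beta_0) = \ker(\lambda(dt+\beta_0)) = \ker(dt+\beta_0)$; this is exactly the remark preceding the lemma, so it need not be reproved.

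The main obstacle I anticipate is the noncompactness of the $\R$-direction in the contactization: one must ensure the Moser vector field is complete (or confine the whole argument to a compact region and then extend by the identity), and one must check that the Moser equation can indeed be solved with the perturbation term being compactly supported and basic (pulled back from $W_\infty$) — this uses that $\widehat\beta - \lambda\beta_0$ is exact with primitive supported in $W_T$, which follows since it is a compactly supported closed $1$-form on $W_\infty$ and $W_\infty$ deformation retracts to $W$ with $H^1$ controlled, or more simply since $\widehat\beta-\lambda\beta_0 = \rho(\beta_1-\lambda\beta_0)$ and one can integrate. Everything else is routine Gray/Moser stability bookkeeping.
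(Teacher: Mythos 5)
Your approach is essentially the paper's: run Moser/Gray stability along the linear path $\lambda_s = (1-s)\lambda\beta_0 + s\widehat\beta$, noting that the preceding lemma makes each $d\lambda_s$ symplectic (tamed by $J$), and then integrate the Moser vector field, which is $t$-independent and supported in $\R\times W_T$, so its flow is complete. You are somewhat more explicit than the paper's one-line justification about the noncompactness of the $\R$-factor (and your worry about exactness of $\widehat\beta-\lambda\beta_0$ is unnecessary, since Gray stability needs no primitive), but the underlying strategy is identical.
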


\begin{proof}
Consider the path of 1--forms
$\lambda_s=(1 - s)\lambda\beta_0 + s\widehat \beta$. The preceding lemma says
that $d\lambda_s$ tame $J$ for all $s$ and thus are all symplectic forms on
$W_\infty$. Thus $\alpha_s=dt+\lambda_s$ is a path of contact forms. 
Moser's technique provides an isotopy that connects the corresponding contact structures if the vector fields constructed in Moser's technique can be integrated for a sufficiently long time. It is clear that it can be so integrated since $\beta_0$ and $\widehat \beta$ coincide
outside the compact set $\overline{W_T}$.
\end{proof}

%%%%%%%%%%%%%%%%%%%%%%%%%%%%%%%%%%%%%%%%%%%%%%%%%%%%%%%%%%
\subsection{Taming  \texorpdfstring{$J$}{J}}
\label{sec:gettingJ}
%%%%%%%%%%%%%%%%%%%%%%%%%%%%%%%%%%%%%%%%%%%%%%%%%%%%%%%%%%

In this subsection we  prove Proposition~\ref{prop:Jtamed}. We begin with a general lemma that holds in any Riemannian manifold.
We will need the auxiliary functions
\begin{equation}\label{eq:H1-H2}
H_1(r) =  \sqrt{1 + \left(\frac{\sn_\kappa(r)}{r}\right)^2},\quad \text{ and } \quad
H_2(r) = {\textstyle \frac{4}{3}|\sec(g)|} \left(r H_1(r) + \int_0^r H_1(t)\, dt\right).
\end{equation}

\begin{lemma}\label{lem:XY}
Suppose the sectional curvature of $(M,g)$ is bounded below by some constant
$\kappa$.  Let $\gamma : [0, R] \to M$ be a unit speed geodesic with
$R < \injg$ and set $p = \gamma(0)$. Let $X(r)=(d\exp_p)_{r\gamma'(0)}v$ be
a vector field along $\gamma$ obtained as the image of a fixed unit vector $v\in
T_pM$. (We think of $v$ as a vector in $T_w(T_p(M))$ for every $w\in T_pM$ using
the canonical identification of a vector space with its tangent space at any
point.) 
Then, using notations from Equation~\eqref{eq:H1-H2} we have
\begin{align}
\label{eq:|XY|} &  \|X(r)\| \leq H_1(r)
\end{align}
and
\begin{equation}\label{eq:XY-nabla}
	\|\nabla_{\dot{\gamma}} X(r)\| \leq  H_2(r),
\end{equation}
for $r\in \bigl[0,\rmax\bigr)$ where $\rmax$ was defined in
Equation~\eqref{eq:r-interval}.
\end{lemma}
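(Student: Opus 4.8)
The plan is to recognize the vector field $X$ along $\gamma$ as a rescaled Jacobi field and then run standard comparison estimates. Write $u=\gamma'(0)$ and consider the geodesic variation $\Gamma(r,s)=\exp_p(r(u+sv))$: for each fixed $s$ the curve $r\mapsto\Gamma(r,s)$ is a geodesic, so its variation field $J(r):=\partial_s|_{s=0}\Gamma(r,s)$ is a Jacobi field along $\gamma$. Since $\Gamma(0,s)\equiv p$ and $\partial_r\Gamma(0,s)=u+sv$, we get $J(0)=0$ and $\nabla_{\dot\gamma}J(0)=v$; on the other hand $J(r)=(d\exp_p)_{ru}(rv)=rX(r)$. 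Hence $X(r)=\tfrac1r J(r)$, and differentiating gives $\nabla_{\dot\gamma}X=\tfrac1r\nabla_{\dot\gamma}J-\tfrac1{r^2}J$.

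For \eqref{eq:|XY|} I would decompose $v=c\,u+w$ with $w\perp u$ and $c^2+\|w\|^2=1$, so that $J=c\,r\,\dot\gamma+J^\perp$, where $J^\perp$ is the perpendicular Jacobi field with $J^\perp(0)=0$ and $\nabla_{\dot\gamma}J^\perp(0)=w$, and the two summands are pointwise orthogonal (the radial part of a Jacobi field vanishing at $0$ is $c\,r\,\dot\gamma$, and the complementary part stays orthogonal to $\dot\gamma$). The radial part contributes $c\,r$ to the norm, and since $\sec(g)\ge\kappa$ the Rauch comparison theorem gives $\|J^\perp(r)\|\le\|w\|\,\sn_\kappa(r)\le\sn_\kappa(r)$; this is valid because $r<\rmax$ keeps $r$ below the first conjugate radius of the constant-curvature-$\kappa$ model (using $\kappa\le K$ together with $r<\tfrac{\pi}{2\sqrt K}$ when $K>0$, and unconditionally when $\kappa\le0$). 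Therefore $\|X(r)\|^2=\tfrac1{r^2}\|J(r)\|^2\le c^2+(1-c^2)\big(\tfrac{\sn_\kappa(r)}{r}\big)^2\le 1+\big(\tfrac{\sn_\kappa(r)}{r}\big)^2=H_1(r)^2$.

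For \eqref{eq:XY-nabla} I would integrate the Jacobi equation $J''=-R(J,\dot\gamma)\dot\gamma$ twice in a parallel orthonormal frame along $\gamma$. Writing $\sigma(t)=R(J(t),\dot\gamma(t))\dot\gamma(t)$, this gives $J'(r)=v-\int_0^r\sigma(t)\,dt$ and, by Fubini, $J(r)=rv-\int_0^r(r-t)\sigma(t)\,dt$; subtracting yields the clean identity $\nabla_{\dot\gamma}X(r)=\tfrac1r J'(r)-\tfrac1{r^2}J(r)=-\tfrac1{r^2}\int_0^r t\,\sigma(t)\,dt$, which in particular has no spurious singularity at $r=0$. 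The curvature bound $\|\sigma(t)\|\le\tfrac43|\sec(g)|\,\|J(t)\|$ (the same one used in Lemma~\ref{lemma:nDJprime}), together with $\|J(t)\|=t\,\|X(t)\|\le t\,H_1(t)$ from the previous step, then gives
\[
\|\nabla_{\dot\gamma}X(r)\|\ \le\ \tfrac43|\sec(g)|\,\tfrac1{r^2}\int_0^r t^2 H_1(t)\,dt\ \le\ \tfrac43|\sec(g)|\int_0^r H_1(t)\,dt\ \le\ H_2(r),
\]
where the middle inequality uses $t^2\le r^2$ on $[0,r]$ and the last one drops the nonnegative term $\tfrac43|\sec(g)|\,r H_1(r)$ in $H_2(r)$.

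The estimates themselves are routine once $X$ is identified with $\tfrac1r J$; the points that require care are getting the direction of the Rauch comparison right (a \emph{lower} curvature bound gives an \emph{upper} bound on Jacobi field growth) and checking that $r<\rmax$ keeps us below the conjugate radius of the $\kappa$-model when $\kappa>0$, and using the exact cancellation in the formula for $\nabla_{\dot\gamma}X$ rather than estimating $\tfrac1r J'(r)$ and $\tfrac1{r^2}J(r)$ separately, which would not produce a bound that vanishes as $r\to0$.
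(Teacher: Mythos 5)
Your proof is correct, and the two halves differ from the paper's argument in interesting but inessential ways.

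For Estimate~\eqref{eq:|XY|} the argument is essentially the same as the paper's: identify $rX(r)$ with the Jacobi field $J$ satisfying $J(0)=0$, $J'(0)=v$, split into tangential and normal parts, and bound the normal part by Rauch. Where the paper invokes the Gauss lemma to get $\|X^\top(r)\|=\|X^\top(0)\|$, you use the (equivalent) fact that the tangential component of a Jacobi field vanishing at $0$ is exactly $cr\dot\gamma$. You also replace the paper's crude bounds $|c|\le 1$, $\|w\|\le 1$ by the sharper interpolation $c^2+(1-c^2)(\sn_\kappa/r)^2$ before relaxing to $H_1^2$; same endpoint. Your remark about checking $r<\rmax$ against the conjugate radius of the $\kappa$-model is a good sanity check.

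For Estimate~\eqref{eq:XY-nabla} your route is genuinely different. The paper starts from $X'=r^{-1}(J'-X)$, bounds $\|X'(r)\|\le \alpha(r)+\frac1r\int_0^r\|X'\|$ and closes the self-referential inequality by a Gr\"onwall-type argument to land exactly on $H_2(r)$. You instead integrate the Jacobi equation twice, observe the exact cancellation
\[
\nabla_{\dot\gamma}X(r)=\frac1rJ'(r)-\frac1{r^2}J(r)=-\frac1{r^2}\int_0^r t\,R(J(t),\dot\gamma)\dot\gamma\,dt,
\]
and then feed in the already-proved bound $\|J(t)\|\le tH_1(t)$. This avoids Gr\"onwall entirely, removes the spurious $1/r$ singularity at the outset, and actually yields the strictly sharper bound $\frac43|\sec(g)|\int_0^r H_1$, which you then relax to $H_2(r)$ by adding back the nonnegative term $\frac43|\sec(g)|\,rH_1(r)$. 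Both approaches are valid; yours is cleaner and mildly quantitatively better, while the paper's is closer in spirit to the Gr\"onwall computations used elsewhere in the article.
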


\begin{proof}
Let $J$ be the Jacobi field along $\gamma$ which satisfies $J(0) = 0$ and
$J'(0) = X(0)$. According to \cite[Theorem II.7.1 page 88]{Chavel06}, one has
$J(r) = rX(r)$. One can decompose $X$ into $X^\top$ 
which is parallel to $\dot\gamma$ and $X^\perp$ which is perpendicular to
$\dot\gamma$. Then by the Gauss lemma, one has $\|X^\top(r)\| = \|X^\top(0)\|$. The
perpendicular part is estimated by Rauch's theorem \cite[Theorem IX.2.3 page
390]{Chavel06} which gives $\|X^\perp(r)\| \leq \sn_\kappa(r)/r\, \|X^\perp(0)\|$.
Thus we have
\begin{align*}
	\|X(r)\|^2 &= \|X^\top(r)\|^2 + \|X^\perp(r)\|^2 \\
	&\leq 1 + \frac{\sn_\kappa(r)^2}{r^2}.
\end{align*}
We will now establish Equation~\eqref{eq:XY-nabla}. Since $X = r^{-1}J$, we have
$X' = r^{-1}(J' - X)$. The Jacobi equation for $J$
reads $J'' + RJ =0$ where $RJ$ is short hand for $R(J,\gamma')\gamma'$. 
In a parallel frame along $\gamma$, the components of this equation can be
integrated component-wise.  We can now estimate
\begin{align*}
\|X'(r)\| &= r^{-1} \|J'(r) - X(r)\| = r^{-1}\left\|\int_0^r -RJ - X'\right\|  \leq r^{-1}\int_0^r \|RJ\| + \|X'\|  \\
&\leq \frac{4}{3}|\sec(g)|\, \max \|J\| + r^{-1}\int_0^r \|X'\|\\
&\leq \frac{4}{3}|\sec(g)|\, r H_1(r) + \frac 1r\int_0^r \|X'\|.
\end{align*}
where the second inequality follows from \cite[p. 95]{Chavel06} just as in the proof of Lemma~\ref{lemma:nDJprime}. So estimating $\| X'\|$ is now a Gr\"onwall type problem. We set $f(r) = \|X'(r)\|$
and $\alpha(r) = \frac{4}{3}|\sec(g)|\, r H_1(r)$, so that the above inequality reads
\begin{equation}
	\label{eq:gronwall}
f(r) \leq \alpha(r) + \frac 1r \int_0^r f(t)\, dt.
\end{equation}
Setting $v(r) = \frac 1r \int_0^r f(t)\, dt$ and keeping in mind that $f$ is smooth and $f(0)=0$, we have
\[
v'(r) = \frac 1r \left(f - \frac 1r\int_0^r f(t)\,dt\right) \leq \alpha(r)/r.
\]
Since $v(0) = 0$, we see that $v(r) \leq \int_0^r \frac{\alpha(u)}u \,du$, which can be 
substituted into Equation~\eqref{eq:gronwall} to obtain the announced estimate.
\end{proof}

\begin{lemma}\label{lem:estimateFF'}
Given a radial geodesic $\gamma$ in $\D(r_0)$ starting at
$p=\gamma(0)$, consider two vector fields $X$ and $Y$ along $\gamma=\gamma(r)$ 
which are the images of unit vectors in $\xi_p$ under the differential of
$\exp_p$ as described in Lemma~\ref{lem:XY}.
Then the derivative of the function $F(r) = (1/\theta')\,d\alpha(X, Y)$ is
bounded by the constant $\overline{H}$ defined in Equation~\eqref{eq:H1-H2-bar-consts},
i.e.
\begin{align}
 \label{eq:F'} |F'(r) | & \leq \overline H,\qquad \text{for}\quad r\in [0,\rmax],
\end{align}
where $\rmax$ is defined in Equation~\eqref{eq:r-interval}.
\end{lemma}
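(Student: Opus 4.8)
The plan is to trade $d\alpha$ for the metric and then reduce everything to the norm bounds already supplied by Lemma~\ref{lem:XY} and Proposition~\ref{3Dcurvature}. First I would use Equation~\eqref{galpha}, which holds for \emph{all} vectors (not just those tangent to $\xi$), to write
\[
F(r) = \tfrac{1}{\theta'} d\alpha(X(r),Y(r)) = -\g{X(r),\,\phi(Y(r))}.
\]
Differentiating along the unit-speed geodesic $\gamma$ and using that $\nabla$ is metric, together with $\nabla_{\dot\gamma}(\phi(Y)) = (\nabla_{\dot\gamma}\phi)(Y) + \phi(\nabla_{\dot\gamma}Y)$, gives
\[
F'(r) = -\g{\nabla_{\dot\gamma}X,\,\phi(Y)} - \g{X,\,(\nabla_{\dot\gamma}\phi)(Y)} - \g{X,\,\phi(\nabla_{\dot\gamma}Y)}.
\]
Since $\phi(v) = Jv^\xi$ and $J$ is an isometry of $\xi$ (Proposition~\ref{proposition:determinemetric}), we have $\|\phi(v)\| \le \|v\|$, so Cauchy--Schwarz yields
\[
|F'(r)| \le \|\nabla_{\dot\gamma}X\|\,\|Y\| + \|X\|\,\|\nabla_{\dot\gamma}\phi\|\,\|Y\| + \|X\|\,\|\nabla_{\dot\gamma}Y\|.
\]

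Next I would insert the estimates. Because $r \le r_{\max} \le \half\injg < \injg$ by Equation~\eqref{eq:r-interval}, the geodesic $\gamma$ has no conjugate points on $[0,r]$, so Lemma~\ref{lem:XY} applies (extending estimates \eqref{eq:|XY|} and \eqref{eq:XY-nabla} to the closed interval by continuity): $\|X\|,\|Y\| \le H_1(r)$ and $\|\nabla_{\dot\gamma}X\|,\|\nabla_{\dot\gamma}Y\| \le H_2(r)$. Estimate \eqref{eq:nabla-phi-norm} of Proposition~\ref{3Dcurvature} gives $\|\nabla_{\dot\gamma}\phi\| \le 2B\|\dot\gamma\| = 2B$. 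Combining, $|F'(r)| \le 2H_1(r)H_2(r) + 2B\,H_1(r)^2$.

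Finally I would bound $H_1$ and $H_2$ by the barred constants of \eqref{eq:H1-H2-bar-consts}. When $\kappa \ge 0$, $\sn_\kappa(r)/r \le 1$ so $H_1(r) \le \sqrt 2 = \overline H_1$; when $\kappa < 0$, $\sn_\kappa(r)/r = \sinh(\sqrt{-\kappa}\,r)/(\sqrt{-\kappa}\,r)$ is increasing, so on $[0,r_{\max}]$ it is at most $\sn_\kappa(r_{\max})/r_{\max}$, giving again $H_1(r) \le \overline H_1$. Plugging $H_1 \le \overline H_1$ into \eqref{eq:H1-H2} shows $rH_1(r) + \int_0^r H_1(t)\,dt \le 2r_{\max}\overline H_1$, hence $H_2(r) \le 2\overline H_2$. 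Therefore $|F'(r)| \le 4\overline H_1\overline H_2 + 2B\overline H_1^2 \le 4(\overline H_2 + B\overline H_1)\overline H_1 = \overline H$, which is \eqref{eq:F'}. There is no genuine obstacle here: the only things to watch are that the no-conjugate-points hypothesis of Lemma~\ref{lem:XY} is met on the whole closed interval $[0,r_{\max}]$ and that the accumulation of constants fits inside the generous factor $4$ built into $\overline H$ — both of which hold as indicated.
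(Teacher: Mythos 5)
Your proof is correct and follows the same overall plan as the paper's: express $F$ via $\phi$ and the metric, differentiate along $\gamma$, and feed in the norm bounds from Lemma~\ref{lem:XY} and Proposition~\ref{3Dcurvature}. Where you differ is a minor but genuine improvement in bookkeeping. You use Equation~\eqref{galpha} directly to get $F = -\g{X,\phi(Y)}$, whereas the paper carries the extra term $\g{R_\alpha,X}\g{R_\alpha,\phi(Y)}$, which is identically zero since $\phi(Y)\in\xi$ and $R_\alpha\perp\xi$; dropping it yields the sharper intermediate bound $|F'|\leq 2(H_2+BH_1)H_1$ instead of the paper's $4(H_2+BH_1)H_1$. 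You also correctly observe that $rH_1(r)+\int_0^r H_1(t)\,dt \leq 2 r_{\max}\overline H_1$, giving $H_2(r)\leq 2\overline H_2$, while the paper's intermediate claim $H_2(r)\leq\overline H_2$ in Equation~\eqref{eq:H1-H2-const} appears to drop a factor of two. The two discrepancies cancel, so both arguments land on the same final constant $\overline H = 4(\overline H_2 + B\overline H_1)\overline H_1$, but your chain of inequalities is internally consistent throughout, which the paper's is not. All the prerequisites you invoke — \eqref{galpha} for arbitrary vectors, $\|\phi(v)\|\leq\|v\|$ from Proposition~\ref{proposition:determinemetric}, $\|\nabla_{\dot\gamma}\phi\|\leq 2B$ from \eqref{eq:nabla-phi-norm}, and the no-conjugate-points hypothesis of Lemma~\ref{lem:XY} on $[0,r_{\max}]$ — are applied correctly.
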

\begin{proof}
By Proposition \ref{proposition:determinemetric} the metric $g$ can be expressed as 
\[
 g(X,Y)=\frac{1}{\theta'} d\alpha(X,\phi(Y))+\alpha(X)\alpha(Y),
\]
therefore
\[
	\frac{1}{\theta'} d\alpha(X, Y)=-\g{X,\phi(Y)}
\]
and
\[
	\frac{d}{dr} F(r) =
	-\g{\nabla_r X,\phi(Y)} - \g{X,\phi(\nabla_r Y)} - \g{X,(\nabla_r \phi)(Y)}.
\]
So Lemma~\ref{lem:XY} and estimate~\eqref{eq:nabla-phi-norm} yield
\begin{align*}
\Bigl|\frac{d}{dr} F(r)\Bigr|  &\leq 
H_2 \|\phi(Y)\| + H_2 \|X\| + (\|[J, J]\|/2 + 2B)\|X\|\cdot \|Y\|\\
&\leq 2H_1 H_2 + (\|[J, J]\|/2 + 2B)H_1^2.
\end{align*}

Note that $H_1(r)$ is increasing when $\kappa<0$, constant (equal $\sqrt{2}$) for $\kappa=0$, and
decreasing on $[0, \frac{\pi}{2\sqrt{\kappa}}]$  to $1$ at $\frac{\pi}{2\sqrt{\kappa}}$ if $\kappa> 0$.  Also the function $H_2(r)$ is increasing and vanishing at $r=0$. Thus, on the interval $[0,\rmax)$ where $H_1$ and $H_2$ are defined we have the following simple estimates
\begin{equation}\label{eq:H1-H2-const}
\begin{split}
 H_1(r)  &\leq \overline{H}_1:=\max_{[0,\rmax)} H_1(r)=\begin{cases}
  H_1(0),&  \text{if}\ \kappa\geq 0,\\
  H_1(\rmax) & \text{if}\  \kappa<0.
\end{cases}\\
H_2(r) & \leq \overline{H}_2=\frac{8}{3}|\sec(g)|\overline{H}_1 \rmax.
\end{split}
\end{equation}
In addition both $H_1$ and $H_2$ are non-negative functions hence
$|F'(r)|\leq \overline{H}$.
\end{proof}

\begin{proof}[Proof of Proposition~\ref{prop:Jtamed}]
Given any nonzero vector $u$ tangent to $D(r)$ as some point $q$ let $\gamma$ be the geodesic from $p$ to $q$. 
Choose a symplectic basis $X_1, \dots, X_n, Y_1, \dots, Y_n$ of $\xi$ at $p$
with $Y_i = JX_i$ and $|X_i| = 1$ and use the same notation for the vector fields along $\gamma$ 
obtained using $\exp_p$ as in Lemma~\ref{lem:XY}. This gives a moving
frame for $D(r)$. Hence there are constants $a_i$ and $b_i$ such that
$u = \sum_i a_i X_i + b_i Y_i$.
Then
\begin{align*}
d\alpha(u, Ju) &= \sum_{1\leq i,j\leq n} 
	\Bigl(-a_i b_j d\alpha(X_i,X_j) 
	+a_i a_j d\alpha(X_i,Y_j) 
	-b_i b_j d\alpha(Y_i,X_j) 
	+a_j b_i d\alpha(Y_i,Y_j)\Bigr) \\
&= \sum^n_{i=1} (a_i^2 + b_i^2) d\alpha(X_i, Y_i) + \sum_{1\leq i\neq j\leq n} 
	\Bigl(-a_i b_j d\alpha(X_i,X_j) 
	+a_i a_j d\alpha(X_i,Y_j) \\
&\hspace{7cm} 	-b_i b_j d\alpha(Y_i,X_j) 
	+a_j b_i d\alpha(Y_i,Y_j)\Bigr).
\end{align*}
At  $r = 0$, $(1/\theta')\, d\alpha(X_i, Y_i) = 1$, and $d\alpha(Z, W) = 0$ if $Z$ and $W$ are any pair of vectors appearing in the 
second sum above. 
From the derivative estimate in Equation~\eqref{eq:F'} we see that
\[
\frac{1}{\theta'} d\alpha(X_i, Y_i) \geq 1 - \overline{H} r \qquad \text{and} \qquad
\frac{1}{\theta'} |d\alpha(Z, W)|(r) \leq \overline{H} r.
\]
To continue the computation, extend the $X_i$, $Y_i$ and $\nD$ to a neighborhood of $\gamma$ and use them to define an auxiliary metric on the neighborhood so that they from an orthonormal basis. In this metric the norm squared of $u$, which we denote by $N(u)$, is $\sum_{i=1}^n (a_i^2+b_i^2)$. We now have
\begin{align*}
	\frac{1}{\theta'} d\alpha(u, Ju) &\geq (1 - \overline{H} r)N(u) - r\overline{H}\sum_{1\leq i\neq j\leq n} 
	\Bigl(|a_i| |b_j| +|a_i| |a_j|+|b_i| |b_j| +|a_j| |b_i|\Bigr)
\end{align*}
Since there are $2\,\binom{n}{2}=n(n-1)$ 
terms in the sum and $\frac 12 N(u)\geq |a_\ast||b_\ast|$ (or $|a_\ast||a_\ast|$ or $|b_\ast||b_\ast|$), we see that
\[
	\frac{1}{\theta'} d\alpha(u, Ju)  \geq \left(1 - (1+2n(n-1))\overline{H}r\right)N(u).
\]
Hence, for $u\not= 0$ we have $d\alpha(u, Ju)>0$ if $r<\left((1+2n(n-1))\overline{H}\right)^{-1}$.
\end{proof}

%%%%%%%%%%%%%%%%%%%%%%%%%%%%%%%%%%%%%%%%%%%%%%%%%%%%%%%%%%
\subsection{From cylinders to balls}
\label{sec:cyl2ball}
%%%%%%%%%%%%%%%%%%%%%%%%%%%%%%%%%%%%%%%%%%%%%%%%%%%%%%%%%%

The goal of this subsection is to prove Proposition~\ref{prop:cyl2balls} which
guarantees that Reeb flow cylinders contain embedded geodesic balls of a certain radius.

\begin{proof}[Proof of Proposition~\ref{prop:cyl2balls}]
Recalling the hypothesis of the proposition we have a complete Riemannian
manifold $(M,g)$ whose sectional curvatures are bounded above by $K$ and a
number  $r_0$ such that
\[
r_0 < \min\left(\frac{\injg}2, \frac{\pi}{2\sqrt{K}}\right).
\]
The vector field $X$ is a unit speed geodesic vector field on $M$ and
$p$ is some point in $M$. Let $\xi$ the hyperplane field $X_p^\perp$. We
set $D_{\xi}(r_0)=\{v\in X_p^\perp, \|v\|<r_0\}$ and
\[
\D(r_0)=\{\exp_p(v) :\; v\in D_{\xi}(r_0) \}.
\]
Moreover, we set
\[
E:\R\times D_{X_p}(r_0)\to M: (t,v)\mapsto \Phi(t, \exp_p(v)),
\]
where $\Phi$ is the flow of $X$. We finally set $\C(r_0)=\text{image} (E)$. 

Denote by $\mathsf{B}^\text{\rm conv}$ the ball at $p$ of radius  $r_0$,
and $\C^\text{\rm conv}(r_0)=\C(r_0)\cap \mathsf{B}^\text{\rm conv}$ the portion
of $\C(r_0)$ in $\mathsf{B}^\text{\rm conv}$. Note that
Proposition~\ref{prop:riem_convexity} implies that the sphere 
$\partial \mathsf{B}^\text{\rm conv}$ is convex.
Let $B$ be the connected component of $E^{-1}(\C^\text{\rm
conv}(r_0))$ that contains $(0,0)$. Since $r_0<\text{\rm conv}(g)$ we
observe that $E$ restricted to $B$ is an embedding. To see this
we need to study the structure of $B$. We first notice that $B\cap \xi_p$ is 
$D_\xi(r_0)$, the ball of radius $r_0$ in $\xi_p$. Now for each $x\in D_\xi(r_0)$ let 
$I_x=(\R \times \{ x\})\cap B$. So $B=\cup_{x\in D_\xi(r_0)} I_x$. Each $I_x$ clearly contains 
the origin ({\em i.e.\ }$(0,x)$.) We moreover claim that it
is a connected interval for all $x$. If not, then there is some $x_0$ for which it is
not connected. Let $\gamma$ be the geodesic in $\D(r_0)$ 
from $p$ to $x_0$. Let $J_x$ be the smallest interval containing $I_x$ for each
$x\in \gamma$ and $K_x$ the image of $x$ under the flow of $X$ for times in $J_x$. 
Finally the union of all $K_x$ for $x\in \gamma$ is a disk $A$. By hypothesis $A$
is not contained in $\mathsf{B}^\text{\rm conv}$. If $x'$ is the first point on $\gamma$ 
such that $K_{x'}$ is not contained in the interior of $\mathsf{B}^\text{\rm
conv}$, then $K_{x'}$ is a geodesic which has an interior tangency with
$\partial \mathsf{B}^\text{\rm conv}$. This contradicts the convexity of
$\mathsf{B}^\text{\rm conv}$.  Thus the $I_x$ are all connected.  This implies
that the restriction of $E$ to $B$ is injective because, since we are below the
injectivity radius, the ball $\mathsf{B}^\text{\rm conv}$ is divided into two
connected components by $\D(r_0)$ and, in particular, there is no trajectory of
$X$ leaving $\D(r_0)$ and returning to it without leaving 
$\mathsf{B}^\text{\rm conv}$.
 
Next, we estimate a  radius $\bar{r}$ of a convex ball centered at $p$ and
contained in $\C^\text{\rm conv}(r_0)$, a maximal such ball $B_p(\bar{r})$ will
either be of radius $\geq r_0$ or have a tangency with $(\partial \C(r_0))\cap
\mathsf{B}^\text{\rm conv}$. Let $q$ be a point of the tangency and $v$ be
the point of intersection of the orbit of $X$ through $q$  and the 
disk $\D(r_0)$. Clearly $v\in (\partial \C(r_0))\cap \mathsf{B}^\text{\rm conv}$ and
thus $d(p,v)=r_0$. Moreover we know that $d(p,q)=\bar{r}$.
Consider the geodesic triangle $T(p,q,v)$ consisting of the unique geodesics connecting
these points. So both sides $(p,q)$ and $(p,v)$ of $T(p,q,v)$ are radial geodesics
emanating from $p$, and $(q,v)$ is a piece of the orbit of $X$. This
piece of orbit contained in $\mathsf{B}^\text{\rm conv}$ otherwise it
would contradict the convexity of $\partial\mathsf{B}^\text{\rm conv}$.
So $d(q, v) \le \mathrm{diam}(\mathsf{B}^\text{\rm conv}) = 2r_0$ and the
perimeter of $T(p,q,v)$ is less than $4r_0$.

We choose
$\gamma(s)$ to be the radial geodesic parameterizing $(p,v)$, {\em i.e.\ }$\gamma(0)=p$
and $\gamma(r_0)=v$.  Denote by $0\leq \phi\leq \frac{\pi}{2}$ the angle
between $X(\gamma(r_0))$ and the normal $n(\gamma(r_0))$, and by 
$0\leq \varphi \leq \frac \pi 2$ the angle
between $X(\gamma(r_0))$ and the line spanned by $\gamma'(r_0)$. Observe that
$\frac{\pi}{2}-\phi \leq \varphi$, thus using
Estimate~\eqref{eq:angle-est} we have
\begin{equation}\label{eq:trig1} 
\sin(\varphi)
\geq
\cos(\phi)
\geq
1-P(r_0).  
\end{equation}
Toponogov's theorem (in a slightly unusual curvature upper bound setup,
see e.g. \cite[Theorem 4.1 p.~197]{Karcher89}) allows to compare with
a geodesic triangle $T=T(p',q',v')$ in the space form
$M_K$, based at vertex $v'$ with angle $\varphi'=\varphi$ at $v'$, 
$d(p',v')=d(p,v)=r_0$, and $d(q',v')=d(q,v)$. 
The sectional
curvature is bounded above by $K$ and the perimeter of $T(p, q, v)$ is
less than $\min(2\injg, 2\pi/\sqrt K)$ so Toponogov's theorem gives
\[ 
d(p',q')\leq d(p,q)=\bar{r}.
\]
Now, let $\theta'$ be the angle at the vertex $q'$ in the reference triangle.
The law of sines \cite[Note II.5 page 103 and references therein]{Chavel06}
applied to the triangle $T(p',q',v')$  yields
\[
\frac{\sin{\theta'}}{\sn_K(r_0)} = \frac{\sin{\varphi}}{\sn_K(d(p', q'))}
\]
This combines with $\sin(\theta') \leq 1$ and Inequality~\eqref{eq:trig1} to
give
\begin{equation*}
\sn_K(\bar{r}) \geq \sn_K(r_0)(1 - P(r_0)).
\end{equation*} 
hence the announced bound since the functions $\sn_K$ are increasing.
\end{proof}

%%%%%%%%%%%%%%%%%%%%%%%%%%%%%%%%%%%%%%%%%%%%%%%%%%%%%%%%
\section{Size of standard neighborhoods in dimension 3}
%%%%%%%%%%%%%%%%%%%%%%%%%%%%%%%%%%%%%%%%%%%%%%%%%%%%%%
\label{S:geometric_method}

In this section we show how to use a geometric method similar to the one
in Section~\ref{sec:darboux} to provide a stronger estimate on the
Darboux radius in dimension 3. We also show how to use this idea to
construct standard neighborhoods of closed geodesics with an estimated
size. 

In Subsection~\ref{SS:geom_to_topo} we state two propositions that essentially
say if $N=\zeta\times D^2$ is a neighborhood of a Reeb orbit and one can
control the twisting of the contact structure or its Reeb vector field then the
contact structure on $N$ can be embedded in the standard contact $\R^3$. We
then prove our main results in dimension 3 using previously derived geometric
estimates of Section~\ref{sec:darboux}. In the following sections we then prove
facts about characteristic foliations that were needed in the proofs of our
main result. 

\subsection{From geometric control to topology}
\label{SS:geom_to_topo}

The geometric setup used throughout this section is the following. We
consider $(\alpha, g, J)$ a contact metric structure compatible with
$(M,\xi)$, and denote by $R_\alpha$ its Reeb vector field. Let $\zeta$
be a portion of a geodesic which is either an arc or a circle.
We denote by $\nu\zeta$ the normal bundle to $\zeta$ with
respect to $g$. We denote by $\exp_\nu : \nu\zeta \to M$ the restriction
to $\nu\zeta$ of $\exp : TM \to M$ (we assume as usual that $g$ is
complete).

We fix a positive radius $r$ less than the injectivity radius $\injg$
and consider, for each $z$ in $\zeta$\, the embedded disk
\[ 
	\D_z := \exp_\nu\left(\{ v \in \nu_z \zeta \;;\;
|v| \leq r\}\right) 
\] and we denote by $\nD$ the unit vector field
orthogonal to all $\D_z$ which coincides with $R_\alpha$ along $\zeta$.

For each radius $r$ we denote by 
\[ 
	\T(r) = \exp_\nu\left(\{ v \in \nu \zeta \;;\; 
	|v| \leq r\}\right) 
\] 
the tube of radius $r$ around $\zeta$. 
It is the image of either a solid torus or a thickened disk depending on
whether $\zeta$ is a circle or an arc but $\exp_\nu$ may fail to be an
embedding. The following lemma, which has nothing to do with contact
geometry and will be proved at the end of this section, gives a
sufficient condition to have an embedding in the thickened disk case.

\begin{lemma}
\label{lemma:Tembedded}
Suppose the sectional curvature of $g$ is bounded above by $K$ and
let $r$ be a positive number such that
\[
	r < \min\left(\frac{\injg}2, \frac{\pi}{2\sqrt{K}}\right).
\]
For every geodesic arc $\zeta$ with length less than $2r$, 
the tube $\T(r)$ is embedded.
\end{lemma}

Returning to the specific setting of contact geometry, we will first
need the following result.
\begin{proposition}
\label{prop:geomtop_closed}
If $\zeta$ is a closed Reeb orbit such that $\T(r)$ is embedded and $\xi$ is
transverse to $\nD$ then $(\T(r), \xi)$ is contactomorphic to a domain in
$(S^1\times \R^2, \ker(d\phi+\rho^2\, d\theta)),$ where $\phi$ is the angular
coordinate on $S^1$ and $(\rho,\theta)$ are polar coordinates on $\R^2.$ In
particular, $(\T(r),\xi)$ is universally tight.
\end{proposition}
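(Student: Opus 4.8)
The plan is to normalize the characteristic foliations that $\xi$ induces on the meridian disks $\D_z$, recognize the normalized picture as the standard neighbourhood $(S^1\times\R^2,\ker(d\phi+\rho^2\,d\theta))$, and then read off universal tightness from that model. First I would fix coordinates adapted to $\zeta$: parametrize $\zeta$ by arclength so that $\zeta'=R_\alpha$ (using $\|R_\alpha\|=1$), note that the oriented normal bundle of a circle in an oriented $3$--manifold is trivial, pick a smooth orthonormal trivialization of it, and use the normal exponential map to identify the embedded tube $\T(r)$ with $S^1\times D^2_r$, with coordinates $(\phi,\rho,\theta)$, so that the disks $\D_z$ become the meridian disks $\{\phi=\mathrm{const}\}$ and $\nD$ restricts along $\zeta=\{\rho=0\}$ to $\partial_\phi$. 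Since $\xi$ is orthogonal to $R_\alpha$, the form $\alpha$ restricts to $d\phi$ along $\zeta$ and $d\alpha$ restricts to a positive area form on $T_{c_z}\D_z=\xi_{c_z}$ at each center $c_z:=\zeta\cap\D_z$. The key first observation is that ``$\xi$ transverse to $\nD$'' means exactly $\alpha(\nD)=\langle R_\alpha,\nD\rangle\neq 0$ on $\T(r)$; this quantity equals $1$ along $\zeta$, hence is positive throughout by connectedness, and so $R_\alpha$ is \emph{positively} transverse to every $\D_z$. Consequently $\alpha|_{\D_z}$ is a primitive of the positive area form $d\alpha|_{\D_z}$, so $(\D_z,\alpha|_{\D_z})$ is an exact symplectic disk whose characteristic foliation $(\D_z)_\xi=\ker(\alpha|_{\D_z})$ is singular exactly where $\nD=R_\alpha$, in particular at $c_z$.

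Next I would normalize these foliations: after a $\phi$--family of diffeomorphisms of $D^2_r$ I want $(\D_z)_\xi$ to become the radial foliation, i.e.\ the one the model realizes on a meridian disk, where $\alpha_0|_{\{\phi=\mathrm{const}\}}=\rho^2\,d\theta$ has the single radial source at $\rho=0$. This requires checking that $c_z$ is the only singularity and that there are no closed leaves; this is where the transversality hypothesis does its real work and is the delicate point. The cleanest route is to show $\nD=R_\alpha$ forces the point onto $\zeta$; failing that, one eliminates any extra $\xi$--tangencies of $\D_z$ in cancelling pairs (Giroux's elimination lemma) once tightness of $\T(r)$ is available. Having normalized the meridian foliations consistently in $\phi$, I would upgrade to a contactomorphism by a Moser argument run parametrically over $\phi$: write $\alpha=f\,d\phi+\beta_\phi$ with $\beta_\phi$ a smooth family of $1$--forms on $D^2_r$, rescale the model form $d\phi+\rho^2 d\theta$ so that it and its differential agree with $\alpha$ and $d\alpha$ along $\zeta$, and interpolate. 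Positive transversality of $R_\alpha$ to the meridian disks keeps the interpolating forms contact on $\T(r)$, and one may take the Moser vector field tangent to the meridian disks so it integrates without escaping $\T(r)$. This produces a contactomorphism of $(\T(r),\xi)$ onto a sub--solid torus of $(S^1\times\R^2,\ker(d\phi+\rho^2 d\theta))$.

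The universal tightness claim is then immediate. The universal cover of $(S^1\times\R^2,\ker(d\phi+\rho^2 d\theta))$ is $(\R\times\R^2,\ker(d\phi+\rho^2 d\theta))$, the contactization of the Liouville manifold $(\R^2,\rho^2\,d\theta)$; writing $\rho^2\,d\theta=x\,dy-y\,dx$ and absorbing the exact term $d(-\frac12 xy)$ exhibits $\ker(d\phi+x\,dy-y\,dx)$ as the standard contact structure on $\R^3$, which is tight. Hence the universal cover of $\T(r)$, which is $\R\times D^2_r$, contact embeds in standard $\R^3$ and is tight, so $(\T(r),\xi)$ is universally tight.

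The main obstacle is the normalization of the characteristic foliations on the meridian disks: turning ``$\xi$ transverse to $\nD$'' into ``$(\D_z)_\xi$ has a single source singularity and no closed leaf'', and doing so smoothly and periodically in $\phi$ (positivity of $d\alpha|_{\D_z}$ alone does not preclude extra saddle tangencies, so this step genuinely needs the geometry of compatible metrics and possibly an elimination argument). Once that normal form is in place, the parametrized Moser deformation and the identification of the model's universal cover with standard $\R^3$ are routine.
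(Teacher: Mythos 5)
Your approach is genuinely different from the paper's: you work with the characteristic foliations on the \emph{meridian disks} $\D_z$ and try to put them into a normal form, then run a parametrized Moser argument; the paper instead looks at the characteristic foliations on the \emph{boundary tori} $T_\rho = \partial\T(\rho)$ for $0 < \rho \leq r$, shows these are all suspensions whose asymptotic directions stay away from the meridian, and then applies Lemma~\ref{lem:ct} (which is proved via Giroux's classification on toric annuli). The paper's choice of surfaces is the crucial point, for the following reason.

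On a boundary torus $T_\rho$, the vector field $\nD$ is \emph{tangent} to $T_\rho$, and it is nowhere in $\xi$ by hypothesis; hence $T_q T_\rho$ can never equal $\xi_q$, so $\xi T_\rho$ is nonsingular, and combined with transversality of $\nD$ to the meridian circles one gets that $\xi T_\rho$ is a suspension whose asymptotic direction is controlled. None of this needs $\nD$ and $R_\alpha$ to agree off $\zeta$. By contrast, on a meridian disk $\D_z$ the vector field $\nD$ is \emph{normal}, and a singular point of $(\D_z)_\xi$ occurs precisely where $T_q\D_z = \xi_q$, i.e.\ where $\nD_q \perp \xi_q$, i.e.\ where $\nD_q = R_{\alpha,q}$ (given the positive sign of $\g{R_\alpha, \nD}$). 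The hypothesis ``$\xi$ transverse to $\nD$'' says exactly $\g{R_\alpha,\nD}\neq 0$; it does \emph{not} forbid $\nD = R_\alpha$ at points of $\D_z$ away from the center $c_z$. So the transversality hypothesis gives you positive transversality of $R_\alpha$ to the disks (correct, and equivalent to the stated hypothesis), but it does not give the ``single source, no closed leaves'' normal form you want, which is the step your Moser argument rests on. You flag this yourself, but the two remedies you mention do not close the gap: there is no reason that $\nD = R_\alpha$ only along $\zeta$ under the stated hypotheses, and the Giroux elimination fallback presupposes tightness of $\T(r)$, which is precisely what the proposition is supposed to deliver — that is circular. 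The conclusion about universal tightness at the end of your writeup is fine once the embedding into $(S^1\times\R^2, \ker(d\phi + \rho^2 d\theta))$ is established, but the embedding itself is not established by your argument. To repair it you would essentially be forced to switch to the boundary-torus picture (or otherwise reproduce the content of Lemma~\ref{lem:ct}), which is what the paper does.
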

In the case where $\zeta$ is an interval in a Reeb orbit, we will use
the embedding criterion above and impose the
stronger condition that the Reeb vector field $R_\alpha$ stays
transverse to all $\D_z$. It will allow us to embed $\T(r)$ in a solid
torus which we will then embed in the standard $(\R^3, \xistd)$.
\begin{proposition}
\label{prop:geomtop_open}
In the above setup and under the hypothesis of
Lemma~\ref{lemma:Tembedded}, if $\zeta$ is an arc and the Reeb vector
field $R_\alpha$ is transverse to the disks $\D_z$, then 
$(\T(r),\xi)$ is contactomorphic to a domain in $(\R^3,\xistd).$
\end{proposition}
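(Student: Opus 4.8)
The plan is to produce an explicit model solid torus, build the contact embedding of $(\T(r),\xi)$ into it, and then invoke the standard model for tight contact structures on solid tori coming from a transverse Reeb vector field (Lemma~\ref{lem:ct}, referenced just after the statement). First I would set up coordinates: since $\zeta$ is an interval in a Reeb orbit, parametrize $\zeta = \varphi(p,[-r,r])$ by arclength $s$, use the normal exponential map $\exp_\nu$ to put ``cylindrical'' coordinates $(s, x)$ on $\T(r)$ with $x$ ranging over disks of radius $r$ in $\nu_{\varphi(p,s)}\zeta$, and pull $\xi$, $\alpha$, $R_\alpha$ back via $\exp_\nu$. The hypothesis that $R_\alpha$ is transverse to every disk $\D_z$ is what makes $\alpha$ restrict to a Liouville primitive $\beta := \alpha|_{\D_z}$ on each disk (because $\iota_{R_\alpha}d\alpha = 0$ and $R_\alpha\pitchfork \D_z$ forces $d\alpha|_{\D_z}$ nondegenerate), exactly as in the proof of Theorem~\ref{thm:GeometricDarbouxHigher}. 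Thus, after reparametrizing the flow, $\T(r)$ sits as a sub-cylinder of the contactization $\R\times(\D(r),\beta)$, with the $\R$-direction being the Reeb direction; the only subtlety is that $\zeta$ being a Reeb orbit of finite length means we are looking at a bounded $s$-interval, which is harmless since we want a domain, not a closed manifold.

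Next I would identify the target. The model $(\R^3,\xistd)$ can be written as the contactization of $(\R^2, \lambda_0)$ with $\lambda_0 = \frac12(x\,dy - y\,dx)$ (or $y\,dx$), i.e.\ $\ker(ds + \lambda_0)$ on $\R\times\R^2$. So I want to embed the bounded piece of $\R\times(\D(r),\beta)$ into $\R\times(\R^2,\lambda_0)$ as a contactization embedding. This is precisely the situation handled by Proposition~\ref{prop:embeddings} / the Interpolation lemma of Subsection~\ref{sec:contactizations}: one needs an almost complex structure on $\D(r)$ adapted to the standard Liouville form $\lambda_0$ and tamed by $d\beta$. Such a $J$ is produced exactly as in the proof of the Darboux theorem — push forward the standard complex structure on $\xi_p\cong\R^2$ by $\exp_p$ (here by $\exp_\nu$ restricted to one normal disk), and tameness by $d\beta = d\alpha|_{\D_z}$ is the content of Proposition~\ref{prop:Jtamed} once $r$ is below the relevant bound; the hypothesis of Lemma~\ref{lemma:Tembedded} guarantees $r$ is small enough that $\T(r)$ is embedded, and the transversality of $R_\alpha$ to the $\D_z$ is assumed outright, replacing the need to separately invoke Proposition~\ref{prop:t-estimate}. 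In dimension $3$ the integrability/taming conditions are automatic (every almost complex structure on a plane field is a $CR$-structure, as noted after Definition~\ref{integrableCR}), which streamlines the argument considerably.

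The remaining step is bookkeeping: assemble the contactization embedding of $(\D(r),\beta)$-contactization into $(\R^3,\xistd)$, restrict to the bounded $s$-slab corresponding to $\zeta$, and observe that this image is a domain in $(\R^3,\xistd)$; pulling back through $\exp_\nu$ and the reparametrization of the Reeb flow then gives the desired contactomorphism of $(\T(r),\xi)$ onto a domain in $(\R^3,\xistd)$. I expect the main obstacle to be purely a matter of careful normalization: matching the reparametrization of the Reeb flow with the $\R$-coordinate on the contactization so that $E^*\alpha = ds + \beta$ on the nose (as in the display ``$E^\ast\alpha=dt+\beta$'' in the proof of Theorem~\ref{thm:GeometricDarbouxHigher}), and then checking that the interpolation construction of Subsection~\ref{sec:contactizations} respects the $s$-translation symmetry well enough that the bounded slab maps to a bounded slab — i.e.\ that we really land in a \emph{domain} and not merely an immersed piece. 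None of this requires new ideas beyond those already deployed for the Darboux estimate; it is the ``interval'' analogue of Proposition~\ref{prop:geomtop_closed}, with $S^1\times\R^2$ replaced by $\R^3$ because the orbit is not closed.
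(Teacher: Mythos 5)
Your proposal diverges genuinely from the paper's approach, but it contains a gap that I do not think you can close without substantially changing the argument.

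The central difficulty is your claim that ``$\T(r)$ sits as a sub-cylinder of the contactization $\R\times(\D(r),\beta)$, with the $\R$-direction being the Reeb direction.'' Recall that $\T(r)=\exp_\nu\bigl(\{v\in\nu\zeta : |v|\leq r\}\bigr)$ is a union of \emph{geodesic normal disks} $\D_z$ over the Reeb segment $\zeta$; it is \emph{not} the Reeb-flow saturation of any one disk $\D_p$. In the Darboux-ball argument the cylinder $\C(r)=E(\R\times D_\xi(r))$ really is a flow cylinder and $E^*\alpha=dt+\beta$ holds on the nose because $\mathcal{L}_{R_\alpha}\alpha=0$. Here, however, a point $q\in\D_z$ with $z$ far from $p$ has no reason to lie on a Reeb orbit that ever crosses $\D_p$ while staying inside any controlled region; the orbit may exit $\T(r)$ and wander off. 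So the map you would need from $\T(r)$ into $\R\times\D_p$ is simply not defined, and transversality of $R_\alpha$ to each $\D_z$ does not give you a global product-by-Reeb-flow coordinate on $\T(r)$ with time-independent $\beta$. Without that, Proposition~\ref{prop:embeddings} and the interpolation lemma cannot be applied to $\T(r)$ as a whole. (A secondary quibble: the automatic taming in dimension~3 is a consequence of being in a 2-dimensional disk — any area form tames any compatibly oriented almost complex structure — not of the automatic $CR$-integrability you cite; integrability and taming are independent conditions.)

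The paper's actual proof is a purely topological one. It first proves a local normalization lemma: a neighborhood of a single Reeb-transverse disk embeds in $(\R^3,\xistd)$ with the disk transverse to vertical lines. This is the analogue of what you want, but only near one disk, where the flow-cylinder picture is valid. The paper then introduces the ``under control'' notion for contact structures on thickened disks, glues explicit caps $\T_t$ and $\T_b$ onto the top and bottom of $\T(r)$ so that the characteristic foliation becomes standard there while $\xi$ stays under control, closes the result up into a solid torus, and invokes Lemma~\ref{lem:ct} — Giroux's classification of contact structures on toric annuli via suspensions and rotation of asymptotic directions. This route never tries to represent the whole tube as a single contactization; it works patch-by-patch and then uses 3-dimensional classification technology to finish. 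If you want to salvage a Liouville-contactization approach, you would first need to produce an embedding of $\T(r)$ into a genuine flow cylinder $\C(r')$ for some controlled $r'$, which is a nontrivial comparison statement (in the spirit of Proposition~\ref{prop:cyl2balls}, but for tubes rather than balls) that the paper does not supply and that your sketch does not address.
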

Now, Theorems \ref{thm:geometric} and \ref{thm:Reeb_nbhds} follow at once.

\begin{proof}[Proof of Theorem \ref{thm:geometric}]
By the hypothesis of the theorem and Propositions~\ref{prop:t-estimate} 
and~\ref{prop:geomtop_open} the
cylinder $\T(r)$ is embedded and the contact structure on $\T(r)$ is
contactomorphic to a domain in $(\R^3,\xi_{std})$. Thus any geodesic ball
embedded in $\T(r)$ is
standard. We will now prove that the geodesic ball $B(p, r)$ of radius $r$ about $p$ 
is contained in $\T(r)$ and thus the theorem will follow. We denote the boundary of
$B(p, r)$ by $S(p,r)$. 

Since $\T(r)$ contains very small balls around $p$, there is a positive first
radius $r_0$ such that the sphere $S(p, r_0)$ intersects $\partial\T(r)$.  We
denote by $p_\pm = \zeta(\pm r)$ the two extremities of $\zeta$.  We denote by
$D_\pm$ the geodesic disks normal to $\zeta$ centered at $p_\pm$.  The boundary
of $\T(r)$ is made of $D_\pm$ which we will call horizontal together with the
vertical part made of points at distance $r$ from $\zeta$. One can see that the
intersection between $S(p, r_0)$ and $\partial\T(r)$ cannot meet the intersection
between the horizontal and vertical parts. At a point in the interior of the
vertical part, the plane tangent to $\partial\T(r)$ is the orthogonal to a
geodesic which is normal to $\zeta$ (by the generalized Gauss Lemma see
e.g. \cite[Lemma 2.11 page 26]{Gray}). Likewise,
the tangent space of $S(p, r_0)$ at any point is the orthogonal to a geodesic
normal to $S(p, r_0)$ and going through $p$. So, at a point where $S(p,r_0)$ is
tangent to the vertical part of $\partial\T(r)$, those geodesics coincide and we
see that the relevant point of $\zeta$ is $p$.  Hence $r_0 = r$ in this case. 

Suppose now that the intersection between $S(p, r_0)$ and $\partial\T(r)$ is in
$D_+$ or $D_-$.  Let $q$ be a tangency point in one of those disks, say $D_+$.
Let $\gamma$ be the geodesic from $p_+$ to $q$ (recall we are below the
convexity radius). Note that $\gamma$ is normal to $\zeta$. Let $\gamma'$ be the
geodesic between $p$ and $q$.  Since $S(p,r_0)$ is tangent to $D_+$ at $q$
the Gauss lemma implies that $\gamma'$ is normal to $\gamma\subset D_+$. 
Now there are two cases to consider. If $q = p_+$ then $r_0 = r$ and we have
established our claim.  If $q\not= p_+$ then we
contradict Lemma~\ref{lemma:Tembedded}, applied to $\gamma$, because the tube around
$\gamma$ with radius $r$ is not embedded. Indeed we have two geodesics $\zeta$
and $\gamma'$ normal to $\gamma$ with length at most $r$ which intersect.
\end{proof}

\begin{proof}[Proof of Theorem~\ref{thm:Reeb_nbhds}]
By the hypothesis of the theorem and Proposition~\ref{prop:t-estimate}, the geodesic tube 
$\T(r)$ about the geodesic $\gamma$ is embedded
and the Reeb vector field stays transverse to all the disks $\D_z$. Since the
contact planes are perpendicular to the Reeb vector field, this implies that
they are transverse to the vector field $\nD$ normal to the disks. The theorem
now follows from Proposition \ref{prop:geomtop_closed}.
\end{proof}

The next sections are devoted to the proof of
Propositions~\ref{prop:geomtop_closed} and~\ref{prop:geomtop_open} but first we
come back to Lemma~\ref{lemma:Tembedded}.

\begin{proof}[Proof of Lemma~\ref{lemma:Tembedded}] 
By the assumption  $r$ is less than the injectivity radius.
Thus, the restriction of $\exp_\nu$ to $\{z\}\times D(r)$ for every fixed $z$ is
injective. Thus injectivity of $\exp_\nu$ can fail only when there is a
geodesic triangle $T(p,q,v)$ in $\T(r)$ formed by a piece
of the geodesic $\zeta$ between $p=\zeta(z_0)$ and $q=\zeta(z_1)$ in
$\T(r)$ and two ``radial'' geodesics $\gamma_0$ in $\D_{z_0}$,
and $\gamma_1$ in $\D_{z_1}$, such that $\gamma_0(0)=p$,
$\gamma_1(0)=q$ and $\gamma_0(t_0)=\gamma_1(t_1)=v$ for some $t_0, t_1\leq
r$. 
Since $\zeta$ is orthogonal to
$\D_{z_0}$ and $\D_{z_1}$, the triangle $T$ also has two right
angles at vertices $p$ and $q$ and its perimeter is at most $4r$ which
is less than $\min(2\injg, 2\pi/\sqrt K)$. Toponogov's theorem
\cite[Theorem 4.1 p.~197]{Karcher89} first guaranties there exists an
analogous triangle in the model space $M_K$. So we already get a
contradiction is $K$ is non-positive since there is no triangle with two
right angles in Euclidean or hyperbolic space. We now assume that $K$ is
positive.

Without loss of generality we assume
\begin{equation}\label{eq:d(q,v)<=d(p,v)}
d(q,v)\leq d(p,v),
\end{equation}
otherwise we may switch the labels of the vertices $p$ and $q$ (the only
feature of the vertex needed is that the angle at that vertex is $\frac \pi 2$).
Now, compare the right
geodesic triangle $T(p,q,v)$ to the right
triangle $T'(p', q', v')$,
in the $2$-sphere of constant curvature $K$. Since at $p'$ the
angle between the sides of the triangle $T'$ is
$\frac\pi 2$ and $d(p,q)=d'(p',q')$, $d(p,v)=d'(p',v')$, the spherical law of cosines radius implies,
\[
d'(p',v') < d'(q',v').
\]
However, this yields a contradiction as
\[
d(p,v)=d'(p',v')< d'(q',v')\leq d(q,v)\leq d(p,v),
\]
where the second inequality is a consequence of Toponogov's theorem
cited above, and the last inequality is Equation~\eqref{eq:d(q,v)<=d(p,v)}. 
\end{proof}

%---------------------------------------------------------------------
\subsection{Background on characteristic foliations}
%---------------------------------------------------------------------
Recall that an oriented singular (this adjective will be implicit in the
following) foliation on an oriented surface $S$ is an equivalence class
of 1--forms where $\alpha \sim \beta$ if there is a positive function $f$ such
that $\alpha = f\beta$. 
Let $\alpha$ be a representative for a singular
foliation $\F$. 
A singularity of $\F$ is a point where $\alpha$ vanishes. The
singularity $p$ is said to have non-zero divergence if
$(d\alpha)_p$ is an area form on $T_pS.$ If $\omega$ is
an area form on $S$ (compatible with the chosen
orientation) then to 
each singular point $p$ we attach the sign of the unique real number $\mu$ such that 
$(d\alpha)_p = \mu \omega_p$. One can easily check that singular points and their signs
do not depend on the choice of $\alpha$ in its equivalence class or of $\omega$ if
we keep the same orientation.

Let $S$ be an oriented surface in a contact manifold $(M, \xi)$ with $\xi = \ker
\alpha,$ co-oriented by $\alpha$. The characteristic foliation $\xi S$ of $S$ is
the equivalence class of the restriction of $\alpha$ to $S$. The contact
condition ensures that all singularities of characteristic foliations have
non-zero divergence and hence have non-zero sign. Singularities of $\xi S$
correspond to points where $S$ is tangent to $\xi$ and they are positive or
negative according as the orientation of $\xi$ and $S$ match or do not match.
We also notice that $\alpha$ provides a co-orientation, and hence if $S$ is 
oriented by an area form $\omega$ the orientation of the line field $\xi S$ is given by 
the vector field $X$ which satisfies $\iota_X \omega=\alpha|_S$.
One may dually think of the characteristic foliation on $S$ as coming from the
singular line field on $S$ given by $T_pS\cap \xi_p$ for each $p\in S.$

%---------------------------------------------------------------------
\subsection{Characteristic foliations on tori and contact embeddings}
%---------------------------------------------------------------------

We will need to show, informally speaking, how a contact structure which is
transverse to the core of the solid torus and ``does not rotate more than half a
turn between the core and the boundary'' embeds inside the standard contact
structure on  $\R^3$. In the following we make this statement precise and provide 
a proof of it. We will denote by $T$ a torus and by $T_t$ the torus $T \times \{t\}$ in the
thickened torus $T \times [0, 1]$.

We first recall some notions about suspensions on tori.  A non-singular
foliation $\mathcal{F}$ on $T$ is called a suspension if there is a simple
closed curve intersecting all leaves transversely. The name comes from the fact
that $\mathcal{F}$ can be reconstructed by suspending the Poincar\'e first
return map on the transversal curve. To such a foliation one can associate a
line in $H_1(T; \R)$. This line $d(\mathcal{F})$ is called the asymptotic
direction of $\mathcal{F}$. We briefly sketch the construction. Pick any point
$x$ in $T$, follow the leaf of $\mathcal{F}$ through $x$ for a length $T$, and
create a closed curve $O(x, T)$ using a geodesic (for some auxiliary metric).
Then the limit homology class $\lim_{T \to \infty} \frac{1}{T} [ O(x, T) ]$
exists for every $x$ and it defines a line in $H_1(T; \R)$ that does not depend
on $x$ or $T$. The limit is called the {\em asymptotic direction}. Two easy
examples are when $\mathcal{F}$ is linear (we recover the intuitive notion of
direction) and when there is a closed leaf (its asymptotic direction is spanned
by the homology class of this leaf).

Let $T \times [0, 1]$ be a thickened torus.  If the characteristic foliations on
all the tori $T_t$ induced by some contact structure $\xi$ are suspensions then
the contact condition forces the asymptotic directions $d(\xi T_t)$ to always
rotate continuously in the same direction (which is determined by the
orientations of the manifold, the contact structure and the tori). This
direction can be constant along some sub-intervals\footnote{This obviously
happens around each $t$ such that $\xi T_t$ is structurally stable.} but it
cannot be constant in a neighborhood of $t$ if $\xi T_t$ is linear.

If a contact structure $\xi$ on a solid torus $W$ is transverse to a core curve
$K$ of $W$ then it lifts to a contact structure on the toric annulus $T\times
[0, 1]$ obtained by blowing%
\footnote{The knot $K$ has a tubular neighborhood 
$D^2_\varepsilon \times S^1$ with
coordinates $(re^{i\theta}, \varphi)$ such that 
$\xi = \ker d\varphi + r^2d\theta$. The blow up map from 
$[0, 1] \times S^1 \times S^1$ to $D^2 \times S^1$ is simply
$(s, \theta, \varphi) \mapsto (\sqrt{s}e^{i\theta}, \varphi)$ on $[0, \varepsilon^2] \times S^1 \times S^1$ and interpolated to $(s, \theta, \varphi) \mapsto (s e^{i\theta}, \varphi)$ on $[\varepsilon, 1] \times S^1 \times S^1$.
}
 up $K$. The lifted contact structure induces a
linear foliation on the boundary component which projects to $K$, say $T_0$. The
direction of this foliation is spanned by the meridian class, i.e. the class in
$H_1(T)$ which spans the kernel of the map from $H_1(T \times [0, 1])$ to
$H_1(W)$ induced by the projection.

The following lemma gives a precise formulation of the idea described informally
at the beginning of this subsection.
\begin{lemma}\label{lem:ct}
Let $\xi'$ be a contact structure on a solid torus $W$ transverse to a core curve 
$K$ of $W$. Let $\xi$ be the  contact structure on $T \times [0, 1]$ lifted from $W$ 
as described 
above. If all characteristic foliations $\xi T_t$ are suspensions whose
asymptotic directions never contain the meridian homology class for $t > 0$
then $(W, \xi')$ is contactomorphic to a  domain in
$(S^1\times \R^2, \xi_{rot}=\ker(d\phi+r^2\, d\theta)).$ 
\end{lemma}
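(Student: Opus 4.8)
The plan is to put $\xi$ into a normal form in which the characteristic foliations on all the tori $T_t$ are linear, and then to write the embedding down explicitly; this follows the strategy used in dimension three in \cite{EKM-sphere}. I work on the blown-up solid torus $T\times[0,1]$ with coordinates $(t,\phi,\theta)$, where $\partial_\phi$ spans the longitude class (the direction of the core $K$) and $\partial_\theta$ spans the meridian class. By hypothesis $\xi T_0$ is the linear foliation directed by $\partial_\theta$, and for $t>0$ the asymptotic direction $d(\xi T_t)$ avoids the meridian; since (as recalled in the previous subsection) the contact condition forces $d(\xi T_t)$ to turn monotonically, and since near $K$ the contact structure is, by the standard neighbourhood theorem for transverse curves, already a neighbourhood of the core of $(S^1\times\R^2,\xi_{rot})$, these directions sweep monotonically away from the meridian on the same side as in the model. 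I record a radius $R(t)\ge 0$ by requiring that the model torus $\{r=R(t)\}$ have the same asymptotic direction as $\xi T_t$; then $R(0)=0$, and $R$ is strictly increasing once the rotation is known to be strictly monotone.

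First I would reduce to the case where every $\xi T_t$ is a \emph{linear} foliation, with slope strictly monotone in $t$. This is the technical heart and uses Giroux's flexibility theory for characteristic foliations: a one-parameter family of tori carrying suspensions can be isotoped, keeping each torus fixed setwise and fixing a neighbourhood of $T_0$, so that all induced foliations become linear. The delicate point is the passage through the levels where the asymptotic direction is rational, where the foliation has closed leaves and is ``locked''; this is handled by exploiting the structural stability of these Morse--Smale foliations, as in Giroux's convexity theory and the classification of tight contact structures on toric annuli. After this reduction, rescaling the contact form lets us assume $\xi=\ker\alpha$ with $\alpha|_{T_t}$ proportional to $d\phi+R(t)^2\,d\theta$, so that $\xi T_t$ is exactly the characteristic foliation of the model torus $\{r=R(t)\}$.

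Next I would construct the embedding. Inside $(S^1\times\R^2,\xi_{rot})$ I would pick a solid-torus neighbourhood $U$ of the core — a suitable tube around a curve, not necessarily a round ball bundle, so that more than a quarter turn of rotation can be accommodated — whose boundary-parallel tori realise exactly the slope profile $-R(t)^2$ arranged above; the map $(t,\phi,\theta)\mapsto(\phi,R(t)e^{i\theta})$, with $R$ vanishing to first order at $0$ so that it descends through the blow-down, is the model for the desired diffeomorphism, collapsing $T_0$ onto the core. Then $\xi$ and the pull-back of $\xi_{rot}$ are two contact structures on $T\times[0,1]$ inducing the \emph{same} linear characteristic foliation on every $T_t$; a parametric Gray--Moser argument (the foliations being linear, the interpolating path of contact structures may be taken to fix every $T_t$) shows they are isotopic rel the family of tori, hence rel a neighbourhood of $T_0$. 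Pushing this isotopy through the blow-down and gluing it to the already-standard neighbourhood of $K$ supplied by the transverse neighbourhood theorem produces a contactomorphism from $(W,\xi')$ onto the domain $U$ of $(S^1\times\R^2,\xi_{rot})$.

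The step I expect to be the main obstacle is precisely the reduction to linear characteristic foliations — making the passage through the rational (closed-leaf) levels compatible with the monotone rotation data — while everything afterwards is an explicit construction together with a routine Moser argument. Finally, since $\xi_{rot}$ is universally tight (its pull-back to $\R\times\R^2$ is the standard contact structure on $\R^3$), any domain in it, and in particular $(W,\xi')$, is universally tight, which gives the concluding assertion of the lemma and is what is used in Proposition~\ref{prop:geomtop_closed}.
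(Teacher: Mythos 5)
Your overall strategy differs from the paper's and has two genuine gaps.

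First, the step you flag as the ``technical heart'' --- deforming $\xi$, keeping every $T_t$ fixed setwise and a neighbourhood of $T_0$ fixed, so that every $\xi T_t$ becomes linear with strictly monotone slope --- is not something you can get from ``Giroux's flexibility'' together with ``structural stability.'' Structural stability of the Morse--Smale levels is precisely the \emph{obstruction} to making those foliations linear by a perturbation, not a mechanism for doing so, and on a locked interval the asymptotic direction is constant, so after linearization the slope cannot be strictly monotone there without reparametrizing $t$ (which moves the tori). What would justify such a reduction is the full force of Giroux's classification of rotative contact structures on toric annuli, \cite[Theorem~3.3]{Giroux00}, and that is exactly what the paper invokes --- but only once and more economically: it builds an explicit embedding matching only the characteristic foliation on $\partial W$ and the germ along $K$, checks that the two paths of asymptotic directions are homotopic rel endpoints, and lets Giroux's theorem upgrade the embedding to a contactomorphism rel boundary. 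Once you invoke Giroux in this way, your subsequent ``parametric Gray--Moser'' step becomes redundant.

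Second, your function $R(t)$ is not well-defined without a preliminary choice you have skipped. The model tori $\{r=\rho\}$ in $(S^1\times\R^2, \ker(d\phi+r^2 d\theta))$ realize asymptotic directions only in the open arc of $P(H_1(T;\R))$ running from the meridian $[\partial_\theta]$ toward the longitude $[\partial_\phi]$. The hypothesis only forbids $d(\xi T_t)$ from returning to the meridian; nothing prevents it from reaching or passing $[\partial_\phi]$, in which case there is no $\rho$ with matching asymptotic direction and $R(t)$ does not exist. The paper resolves this by choosing the longitude $L$ carefully: longitude classes differ by multiples of the meridian and accumulate on $[\partial_\theta]$ in $P(H_1)$, so one can pick $L$ whose class lies in the complementary gap, outside the compact arc traced by the $d(\xi T_t)$. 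In the resulting coordinates the characteristic foliation on $\partial W$ is directed by $\partial_\theta + F\,\partial_\phi$ with $F\leq 0$, and the explicit boundary map $(1,\theta,\phi)\mapsto((-F(\theta,\phi))^{1/2},\theta,\phi)$ matches it into the model; this is the analogue of your $R(t)$, but now guaranteed to exist.
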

This lemma is an easy consequence of the following result of Giroux. 
\begin{theorem}[{Giroux 2000, \cite[Theorem 3.3]{Giroux00}}]
Suppose two contact structures $\xi_0$ and $\xi_1$ on $T\times[0,1]$ induce
suspensions on each torus $T_t$ which agree on $T_0$ and $T_1$. If the two paths
of asymptotic directions $t \mapsto d(\xi_i T_t)$ are non constant and homotopic
relative to their common end-points then $\xi_0$ and $\xi_1$ are isotopic
relative to the boundary $T_0 \cup T_1$.
 \qed
\end{theorem}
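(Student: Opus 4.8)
The plan is to split the proof into a ``rigidity'' step, which handles the easy local picture, and a ``normal form'' step, which does the real work, exactly paralleling the way one classifies contact structures on $T\times[0,1]$ via movies of characteristic foliations.

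\textbf{Step 1 (uniqueness when the characteristic foliations coincide).} First I would treat the case where $\xi_0$ and $\xi_1$ induce the same non-singular foliation $\F_t$ on every torus $T_t$. Pick co-oriented contact forms $\xi_i=\ker\alpha_i$ with $\alpha_0=\alpha_1$ on $T_0\cup T_1$; since both restrict on each $T_t$ to defining forms for $\F_t$ one has $\alpha_1|_{T_t}=u_t\,\alpha_0|_{T_t}$ with $u_t>0$. The affine path $\alpha_s=(1-s)\alpha_0+s\alpha_1$ then still satisfies $\alpha_s\wedge d\alpha_s>0$ (on each $T_t$ the restriction stays a positive multiple of $\alpha_0|_{T_t}$, and the transverse derivatives combine with positive coefficients), so each $\alpha_s$ is contact with $\xi_sT_t=\F_t$ for all $s,t$. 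Now run Moser's argument, seeking a time-dependent vector field $X_s$ tangent to every $T_t$, vanishing on $T_0\cup T_1$, together with a function $\mu_s$ such that $\dot\alpha_s+\mathcal{L}_{X_s}\alpha_s=\mu_s\alpha_s$. Restricted to $T_t$, and using that $\dot\alpha_s|_{T_t}=(\alpha_1-\alpha_0)|_{T_t}$ is a multiple of $\alpha_s|_{T_t}$, this becomes a cohomological equation along the leaves of the suspension $\F_t$, solvable leafwise by integrating along a closed transversal and patching, with solutions depending smoothly on $(s,t)$ and vanishing on the boundary tori. Because $X_s$ is tangent to the compact tori $T_t$ it integrates for all time, and the resulting isotopy is the identity on $T_0\cup T_1$, giving $\xi_0\cong\xi_1$ rel boundary.

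\textbf{Step 2 (reduction to Step 1 by normalization).} The content of the theorem is that the general hypothesis can be pushed, by an isotopy rel boundary inside the class of contact structures inducing suspensions on all $T_t$, to the hypothesis of Step 1. Using genericity of one-parameter families (``movies'') of characteristic foliations on $T^2$, each $\xi_i$ is isotopic rel boundary, by a $C^\infty$-small isotopy, to a \emph{normalized} contact structure for which: for all but finitely many $t$ the foliation $\xi_iT_t$ is either linear of irrational slope or a Morse--Smale suspension (finitely many closed leaves, all carrying the asymptotic direction, hyperbolic for the return map), and near each exceptional value $\xi_i$ coincides with an explicit birth/death model for a pair of closed leaves. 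The contact condition forces $t\mapsto d(\xi_iT_t)$ to rotate monotonically, so this path is a simple monotone path, non-constant by hypothesis. The key claim is that such a normalized contact structure, up to isotopy rel boundary, is completely determined by the homotopy class rel endpoints of its path of asymptotic directions: the ``linear'' pieces are standard, the birth/death local models are unique up to isotopy, and the gluing data carries no extra invariant. Granting this, I would choose a generic representative path in the common homotopy class, realize it by an explicit contact structure $\xi$, and conclude that both $\xi_0$ and $\xi_1$ are isotopic rel boundary to contact structures inducing exactly the characteristic foliations of $\xi$ on every $T_t$; Step 1 then closes the argument.

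\textbf{Main obstacle.} The hard part is the ``key claim'' in Step 2: that the isotopy class rel boundary of a normalized contact structure is a complete invariant of the homotopy class rel endpoints of its rotation path. This requires (i) the transversality theory for movies of characteristic foliations on $T^2$; (ii) uniqueness of the local models near the ``rational'' tori where closed leaves are created or cancelled, where one must exclude hidden invariants — for instance the number and cyclic position of closed leaves — by showing that within contact structures one can slide and create/cancel closed leaves without changing the rotation path up to homotopy rel endpoints; and (iii) assembling the finitely many local isotopies into one global isotopy fixing $T_0\cup T_1$ that never leaves the class of suspensions, so that the enforced monotonicity of $t\mapsto d(\xi T_t)$ is preserved throughout. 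The non-constancy hypothesis is precisely what gives the rotation path the rigidity used in (ii)--(iii); dropping it would introduce ``rotation-free'' subannuli whose classification (basic slices) is genuinely different and must be handled separately.
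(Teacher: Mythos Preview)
The paper does not prove this statement: it is quoted verbatim from Giroux's 2000 paper and marked with a \qed, then used as a black box in the proof of Lemma~\ref{lem:ct}. So there is no ``paper's own proof'' to compare your proposal against.

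That said, your two-step outline is in the spirit of Giroux's actual argument in \cite{Giroux00}: a Moser-type uniqueness when the foliations on all $T_t$ agree, followed by a normalization of the movie of characteristic foliations via bifurcation theory. You are also right that the substance lies entirely in what you call the ``key claim'' in Step~2, and you are honest that you have not proved it. A few specific points where your sketch would need tightening before it could stand as a proof: in Step~1, the claim that the affine interpolation $\alpha_s$ is automatically contact is not as immediate as you suggest --- positivity of $\alpha_s|_{T_t}$ does not by itself control the transverse $\partial_t$-terms in $\alpha_s\wedge d\alpha_s$, and Giroux handles this part differently; in Step~2, the assertion that ``the gluing data carries no extra invariant'' is exactly the delicate part, and in Giroux's treatment this is where the non-constancy hypothesis (and the monotonicity of the asymptotic direction) is used in an essential way, not merely as a convenience. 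Your identification of the ``main obstacle'' is accurate, but as written the proposal is a plan rather than a proof.
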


\begin{proof}[Proof of Lemma~\ref{lem:ct}]
Because of the asymptotic direction assumption, we can choose a longitudinal
curve $L$ for $W$ whose homology class does not belong to the asymptotic
direction of any of the foliations $\xi T_t$ (pick any longitude and add a
sufficiently large multiple of the meridian). In particular we can choose $L$ to
intersect transversely all leaves of $\xi \partial W$ and such that the
corresponding Poincar\'e map rotates all points clockwise. So there is an
identification of $(W, \partial W)$ with $(D^2 \times S^1, S^1 \times S^1)$
sending $L$ to some $\{*\} \times S^1$ and $\xi \partial W$ to a foliation
directed by $v=\frac{\partial}{\partial \theta} + F(\theta,\phi)
\frac{\partial}{\partial \phi}$ where we use coordinates $(\theta,\phi)$ on
$S^1\times S^1$ and $F(\theta,\phi)\leq 0$.

We now consider an embedding $\varphi$ of $D^2 \times S^1$ into $\R^2 \times
S^1$ which is the identity along $\{0\} \times S^1$ and agrees with $(1,
\theta, \phi) \mapsto ((-F(\theta,\phi))^{\frac 12},\theta, \phi)$ along $S^1
\times S^1$. An immediate computation reveals that $\varphi$ sends $\xi \partial
W$ to $\xi_{rot} \varphi(\partial W)$. Using the standard neighborhood theorem
for curves transverse to a contact structure, it can also be easily arranged
that $\varphi$ sends $\xi$ to $\xi_{rot}$ along $K$.  One may now blow up $K$
and apply Giroux's theorem to further isotope $\varphi$ to a contact
embedding. The homotopy hypothesis is guaranteed because the homology class of
$[L]$ belongs to no asymptotic direction of a $\xi T_t$.
\end{proof}

\begin{proof}[Proof of Proposition~\ref{prop:geomtop_closed}]
The vector field $\nD$ is tangent to each torus $T_r = \partial \T(r)$, $0 < r
\leq r_0$ and $\xi$ is transverse to $\nD$ so each characteristic foliation $\xi
T_r$ is non-singular and transverse to $\nD$. In addition, $\nD$ is transverse to the foliation
by meridian circles of $T_r$ coming from the disks $D_z$ so it is a
suspension. So there is a simple closed curve $\mathcal C$ in $T_r$ which is not
a meridian and transversely intersects all leaves of $\xi T_r$ (the homology
class of $\mathcal C$ belongs to a rational approximation of the asymptotic
cycle of $\nD$). For brevity, we denote the asymptotic directions $d(\xi T_r)$ and 
$d(\nD)$ by $X$ and $N$, respectively, and by $M$ the line spanned by the meridian
homology class. One can see $X$ and $N$ as points on the circle $P(H_1(T^2;\R))$
continuously moving as $r$ increases. Note that $X$ initially equals $M$ and
moves clockwise in a monotone way whereas $N$ moves in some way but never hits
$M$ or $X$. This easily implies that $X$ cannot become meridional and one can
apply Lemma~\ref{lem:ct} to obtain the desired conclusion.
\end{proof}

In order to prove Proposition~\ref{prop:geomtop_open}, we first need to
understand neighborhoods of disks transverse to Reeb vector fields.
\begin{lemma}
Let $M$ be a 3--manifold with a contact form $\alpha$ and $D$ a closed disk
embedded in $M$. If the Reeb vector field of $\alpha$ is transverse to $D$ then
there is a neighborhood $V$ of $D$ in $M$ and an embedding $\varphi$ of $V$ in
$\R^3$ such that $\varphi(D)$ is transverse to vertical lines in $\R^3$ and
$\varphi_*\alpha = dz + r^2 d\theta$.
\end{lemma}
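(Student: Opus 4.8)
The plan is to recognize a neighborhood of $D$ as the \emph{contactization} of the symplectic disk $(D,\alpha|_D)$, in the sense of Section~\ref{sec:contactizations}, to put that disk into a normal form inside $\bigl(\R^2,\,x\,dy-y\,dx\bigr)$, and then reassemble the two pieces. This is essentially an exact (form-preserving) version of the standard neighborhood theorem for a surface transverse to the Reeb field.

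\textbf{Flow box.} Transversality of $R_\alpha$ to the compact disk $D$ persists on a slightly larger open disk $D'\supset D$, and for $\epsilon>0$ small the Reeb flow $\psi_t$ gives an embedding $\Psi\colon D'\times(-\epsilon,\epsilon)\to M$, $\Psi(x,t)=\psi_t(x)$, onto an open neighborhood $V$ of $D$ with $\Psi(x,0)=x$ and $\Psi_*\partial_t=R_\alpha$. Set $\lambda:=\alpha|_{D'}$ and let $\pi\colon D'\times(-\epsilon,\epsilon)\to D'$ be the projection. From $\alpha(R_\alpha)=1$, $\iota_{R_\alpha}d\alpha=0$ and $\mathcal{L}_{R_\alpha}\alpha=0$ one gets $\Psi^*\alpha=dt+\pi^*\lambda$. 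Also $\iota_{R_\alpha}(\alpha\wedge d\alpha)=d\alpha$, so the contact condition forces $d\lambda=d\alpha|_{D'}$ to be an area form on $D'$; thus $\lambda$ is a primitive of an area form on the disk $D'$.

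\textbf{Normalizing the disk.} Since $D'$ is a disk, Moser's theorem (comparing $d\lambda$ with a constant multiple of $dx\wedge dy$ of equal total area, with the interpolating primitive chosen to vanish on $\partial D'$) yields an embedding $j\colon D'\to\R^2$ with $j^*\bigl(d(x\,dy-y\,dx)\bigr)=d\lambda$. As $D'$ is simply connected, the closed $1$--form $\lambda-j^*(x\,dy-y\,dx)$ equals $dh$ for some $h\in C^\infty(D')$. Recalling that $x\,dy-y\,dx=r^2\,d\theta$ in cylindrical coordinates on $\R^3=\R^2\times\R$ (vertical coordinate $z$), define
\[
\Phi\colon D'\times(-\epsilon,\epsilon)\to\R^3,\qquad \Phi(x,t)=\bigl(j(x),\,t+h(x)\bigr).
\]
Then $\Phi$ is an embedding, with continuous inverse $(u,v)\mapsto\bigl(j^{-1}(u),\,v-h(j^{-1}(u))\bigr)$, and $\Phi^*(dz+r^2\,d\theta)=(dt+dh)+j^*(x\,dy-y\,dx)=dt+\lambda=\Psi^*\alpha$.

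\textbf{Assembling.} Set $\varphi:=\Phi\circ\Psi^{-1}\colon V\to\R^3$. It is an embedding with $\varphi_*\alpha=dz+r^2\,d\theta$, and $\varphi(D)\subset\Phi(D'\times\{0\})=\{(j(x),h(x)):x\in D'\}$ is a graph over $j(D')\subset\R^2$, hence transverse to the vertical lines. The only step that is not purely formal is the symplectic normalization of $(D',d\lambda)$ in the second step: this is the classical Moser argument for area forms on a surface with boundary, available here precisely because we may rescale the model so that the total areas agree, and I expect it to be the main (but routine) technical input.
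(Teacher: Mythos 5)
Your proof is correct, and it takes a genuinely different route from the paper's. The paper's argument also begins with the same flow box, setting $\beta = \alpha|_{D'}$, but then proves a separate extension ``Claim'': that $\beta$ can be extended to all of $\R^2$ so that $d\beta$ stays symplectic and $\beta = r^2\,d\theta$ outside a large disk. It then runs a three--dimensional Moser isotopy on the contactization $\R^2 \times \R$, interpolating between $dz + \beta$ and $dz + r^2\,d\theta$, with the Moser vector field of the form $Y_t + \lambda_t\partial_z$; the compactly supported difference of primitives guarantees long-time integrability, and the commutation with $\partial_z$ guarantees that the graph-over-disk property is preserved. You instead perform the Moser normalization purely two--dimensionally on $(\overline{D'}, d\lambda)$ against a round disk with the constant area form, and then use simple-connectedness of $D'$ to write $\lambda - j^*(x\,dy - y\,dx) = dh$, realizing the discrepancy of primitives as an explicit shear $(x,t)\mapsto(j(x),\,t+h(x))$ in the $z$-direction. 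This avoids the extension Claim entirely and replaces the 3D Moser flow with a closed-form map; the shear plays exactly the role that the $\lambda_t\partial_z$-component plays in the paper's flow. The two proofs thus package the same underlying Moser-plus-$z$-translation idea, but yours is more direct and requires fewer auxiliary constructions.

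Two routine points worth tightening: (i) $D'$ should be taken to be a \emph{closed} disk with $D$ in its interior so that the 2D Moser argument is run on a compact surface with boundary; (ii) in that Moser argument, one must choose the primitive $\sigma$ of the difference of area forms so that $\sigma$ restricts to zero along $T\partial D'$ — this is possible precisely because the total areas have been matched — which forces the Moser vector field to be tangent to $\partial D'$ and hence integrable for all $t\in[0,1]$. You allude to this (``interpolating primitive chosen to vanish on $\partial D'$''), and it is indeed the only non-formal input, as you say.
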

\begin{proof}
Let $D'$ be a disk containing $D$ and still transverse to the Reeb vector field
$R$. Let $\varepsilon$ be a positive number such that the flow of $R_\alpha$
embeds $D' \times [-\varepsilon, \varepsilon]$ into $M$. If $r$ and $\theta$ are
polar coordinates on $D'$ and $z$ is the coordinate in $[-\varepsilon,
\varepsilon]$ the pull-back of $\alpha$ is $dz + f(r,\theta)\,dr + g(r,
\theta)\, d\theta$ for some functions $f$ and $g$.  This contact form extends
trivially to $D' \times \R$ as the contactization of the exact symplectic
manifold $(D', \beta)$ where $\beta = f\, dr + g\, d\theta$. We now think of
$D'$ as the unit disk in $\R^2$. The following claim will be proved below.

\textbf{Claim:} There is an extension of $\beta$ to $\R^2$ such that $d\beta$
is symplectic everywhere and $\beta = r^2\, d\theta$ outside some large disk.

Since the symplectic condition is convex in dimension 2, $\alpha_t=dz+(1-t)\beta
+ t r^2\, d\theta, t\in [0,1],$ is a family of contact forms on $\R^2 \times
\R$.
We now use a general fact: if $\beta_t$ is a family of 1--forms on a surface $S$
such that each $d\beta_t$ is symplectic and all $\beta_t$ agree outside some
compact set, then there is an isotopy $\varphi_t$ of the contactization $S \times
\R$ sending surfaces $S \times\{z\}$ to surfaces transverse to lines $\{s\}
\times \R$ and such that $\varphi_t^*\alpha_t = \alpha_0$. The isotopy is
constructed using Moser's technique as the flow of a vector field $X_t = Y_t +
\lambda_t \partial_z$ with $Y_t$ the vector field on the surface defined by
$d\beta_t(Y_t, \cdot) = -\dot\beta_t$ and $\lambda_t = -\beta_t(Y_t)$. The
transversality condition comes from the commutation of $X_t$ and $\partial_z$.

We now prove the claim. First we fix the symplectic form $\omega = r\,
dr\wedge d\theta$ on $\R^2$ so that the problem is reformulated in terms of vector
fields $\omega$-dual to the forms we consider. We have a vector field $Y$ with
positive divergence on $D'$ and we want to extend it to $\R^2$ such that it
coincides with $Y_0 = r\partial_r$ outside some large disk. We denote by $D_r$
the disk of radius $r$ around the origin so $D' = D_1$. We first extend $Y$
arbitrarily to a neighborhood of $D_1$ so its divergence stays positive in some
$D'' = D_{1+\delta}$. Let $h$ be a smooth function from $[0, \infty)$ to $\R$
with $h$ and $h'$ vanishing on $[0, 1]$ and everywhere non negative. If $h$
grows sufficiently fast between $1$ and $1+\delta$, the vector field $Y' = Y +
h\partial_r$ has positive divergence and is transverse to the boundary of
$D''$. It is then easy to extend $Y'|_{D''}$ to a vector field which is
transverse to all circles $\partial D_r$ for $r \geq 1 + \delta$ and has the
same orbits as $Y_0$ outside the disk $D_{1 + 2\delta}$. We can then rescale it
in the region between $D''$ and some large disk $D_r$ (here we do not control
$r$) so that it still has positive divergence and coincides with $Y_0$ outside
$D_r$.
\end{proof}

\begin{proof}[Proof of Proposition~\ref{prop:geomtop_open}]
Since any ball inside $(S^1\times \R^2, \ker(d\phi+\rho^2\, d\theta))$ easily
embeds inside $(\R^3, \xistd)$ we only need to construct an embedding into the
former model. The proposition will follow from the construction of an embedding
of $\T(r)$ into a solid torus satisfying the hypothesis of
Lemma~\ref{lem:ct}. We construct this embedding in several steps.

We first introduce a technical definition. Let $Y$ be either a closed interval
or $S^1$. Let $D^2$ be the unit disk in $\R^2$ and $D_r$ denote the disk of
radius $r.$ A contact structure on a $D^2 \times Y$ is under control if there is
a vector field tangent to $\{0\} \times Y$ and to $\partial D_r \times Y$ for
all $r$ which is transverse to both $\xi$ and the obvious foliation by disks. A
contact structure on a domain $C$ is under control if there is a diffeomorphism
from $C$ to $D^2 \times Y$ sending $\xi$ to a contact structure under
control. The inverse images of the objects involved in the above discussion are
then said to control $\xi$. In particular, in our geometric setup, $\xi$ is
under control on $\T(r)$ and Lemma~\ref{lem:ct} applies to any contact structure
which is under control on a solid torus.

We want to construct thickened disks $\T_t$ and $\T_b$ that can be glued to the
top and bottom, respectively, of $\T(r)$ so that the characteristic foliation on
the top and bottom of $U = \T_t\cup \T(r) \cup \T_b$ is ``standard'' and $\xi$
stays under control on this larger thickened disk.  (Here top and bottom refer to
$\T(r)$ seen with $\zeta$ vertical and oriented from bottom to top. Also
``standard'' means that the singular foliation has a single elliptic singularity
and the rest of the leaves are radial.) Thus we will be able to glue the top and
bottom of $U$ together to obtain a solid torus $S$ with a contact structure
under control into which $\T(r)$ embeds and Lemma \ref{lem:ct} will finish the
proof.

We discuss the construction of $\T_t,$ the construction of $\T_b$ being
analogous.  Let $z_t$ be the top extremity of $\zeta$ and $\D := \D_{z_t}$.  The
previous lemma gives a contact embedding $\varphi$ of a neighborhood of $\D$ in
$\R^2 \times \R$ such that the image of $\D$ is the graph of some function $f$
over some (deformed) disk $\Omega$ in $\R^2$.

Let $K$ be a constant such that $f(\rho,\theta) < K$ for all $(\rho,\theta)\in
\Omega$.  Set $C=\{(\rho,\theta,z)| (\rho,\theta)\in \Omega, f(\rho,\theta)\leq
z\leq K\},$ this will (almost) be the bottom part of $\T_t.$ It is foliated by
the graphs $G_s$ of functions $(1 - s)f + sK$, $s \in [0,1]$ and can also be
seen as the union of the Reeb orbit $\varphi(z_t) \times [f(z_t), K]$ and
vertical annuli over the images $c_r$ of the $\partial\T(r) \cap \D$.  Together
with the vector field $\partial_z$, these objects show that $\xistd$ is under
control on $C$.  We now smooth $\varphi(\T(r) \cap Dom(\varphi)) \cup C$ to get
a thickened disk $C'$ extending $\varphi(\T(r) \cap Dom(\varphi))$ above
$\varphi(D)$ which coincides with $C$ when $z$ is close to $K$ and such that
$\xistd$ is under control on $C'$.

Then we notice that there is a large number $R$ such that $\Omega$ is contained
in the disk of radius $R$ about the origin and there is an isotopy
$\Psi_t:\Omega\to \R^2, t\in [0,1],$ such that $\Psi_0=id_\Omega$ and
$\Psi_1(\Omega)$ is a disk of radius $R$ centered about the origin. Consider the
embedding $\Psi:\Omega\times[0,1]\to \R^3$ defined by
$\Psi(p,t)=(\Psi_t(p),K+K't),$ where $K'$ is a large positive constant to be
determined soon.
For each radius $r$, the characteristic foliation on $\Psi(c_r \times [0,1])$
is given as the kernel of $\Psi|_{c_r\times [0,1]}^* (dz+\rho^2\, d\theta)=K'\,
dt+\beta,$ where $\beta$ is independent of $K'.$ Thus for $K'$ large enough
$\Psi_*\frac{\partial}{\partial t}$ is never tangent to the characteristic
foliation of $\Psi(c_r \times [0,1])$ and $\xistd$ is under control on the image
of $\Psi$. We then choose $\T_t$ to be a smoothed version of $C' \cup \Psi(\Omega
\times [0, 1])$. It can be glued to $\T(r)$ using $\varphi$.
After doing the same thing for the bottom of $\T(r)$ we get $\T_b$ and we can do
the construction of $\T_b$ and $\T_t$ with the same large radius $R$ so that the
top and bottom of $\T_b \cup \T(r) \cup \T_t$ can be glued to get a solid torus
with a contact structure under control.
\end{proof}

\small
%%%%%%%%%%%%%%%%%%%%%%%%%%%%%%%%%%%%%%%%%%%%%%%%%%%%%%%%%%%%%

\end{document}